\def\multichoose#1#2{\ensuremath{\left(\kern-.3em\left(\genfrac{}{}{0pt}{}{#1}{#2}\right)\kern-.3em\right)}}
\newtheorem{thm}[equation]{Theorem}
\newtheorem{lem}[equation]{Lemma}
\newtheorem{prp}[equation]{Proposition}
\theoremstyle{definition}
\newtheorem{defn}[equation]{Definition}
\newtheorem{rmk}[equation]{Remark}
\numberwithin{equation}{section}
\crefname{equation}{}{}
\crefname{lem}{}{}
\newcommand\abs[2][empty]{\csname#1\endcsname \lvert{#2}\csname#1\endcsname\rvert}
\newcommand\doublebar[2][empty]{\csname#1\endcsname \lVert{#2}\csname#1\endcsname\rVert}
\newcommand\mat[1]{\boldsymbol{#1}}
\newcommand\arr[1]{\boldsymbol{\dot{#1}}}
\newcommand\dist{\mathop{\mathrm{dist}}\nolimits}
\newcommand\Div{\mathop{\mathrm{div}}\nolimits}
\newcommand\Tr{\mathop{\smash{\boldsymbol{\rlap{$\arr{\phantom{T}}$}\mathrm{Tr}}}\vphantom{T}}\nolimits}
\newcommand\Trace{\mathop{\mathrm{Tr}}\nolimits}
\newcommand\M{\mathop{\smash{\arr{\mathrm{M}}}\vphantom{M}}\nolimits}
\newcommand\vecM{\mathop{\smash{\vec{\mathrm{M}}}\vphantom{M}}\nolimits}
\newcommand\scalarM{\mathop{\mathrm{M}}\nolimits}
\newcommand\esssup{\mathop{\mathrm{ess\,sup}}}
\newcommand\sgn{\mathop{\mathrm{sgn}}}
\newcommand\re{\mathop{\mathrm{Re}}\nolimits}
\newcommand\R{\mathbb{R}}
\renewcommand\C{\mathbb{C}}
\newcommand\N{\mathbb{N}}
\newcommand\1{\mathbf{1}}
\newcommand\D{\mathcal{D}}
\newcommand\s{\mathcal{S}}
\newcommand\XX{\mathfrak{X}}
\newcommand\DD{\mathfrak{D}}
\newcommand\NN{\mathfrak{N}}
\newcommand\pureH{\parallel}
\newcommand\dmn{{n+1}}
\newcommand\pdmn{{(n+1)}}
\newcommand\dmnMinusOne{n}
\begin{document}

\title{The $\dot W^{-1,p}$ Neumann problem for higher order elliptic equations}

\author{Ariel Barton}
\address{Ariel Barton, Department of Mathematical Sciences,
			309 SCEN,
			University of Ar\-kan\-sas,
			Fayetteville, AR 72701}
\email{aeb019@uark.edu}
%\thanks{}

\begin{abstract}
We solve the Neumann problem in the half space~$\R^{n+1}_+$, for higher order elliptic differential equations with variable self-adjoint $t$-independent coefficients, and with boundary data in the negative smoothness space $\dot W^{-1,p}$, where $\max(0,\frac{1}{2}-\frac{1}{n}-\varepsilon) <\frac{1}{p} <\frac{1}{2}$. Our arguments are inspired by an argument of Shen and build on known well posedness results in the case $p=2$.

We use the same techniques to establish nontangential and square function estimates on layer potentials with inputs in $L^p$ or $\dot W^{\pm1,p}$ for a similar range of~$p$, based on known bounds for $p$ near~$2$; in this case we may relax the requirement of self-adjointess.
\end{abstract}

\keywords{Elliptic equation, higher-order differential equation, Neumann problem, layer potentials, good-$\lambda$ inequality}

\subjclass[2010]{Primary
35J30, %  	Higher-order elliptic equations
Secondary
35J40, % 	Boundary value problems for higher-order elliptic equations
31B10, %   	Integral representations, integral operators, integral equations methods
35C15% Integral representations of solutions
}

\maketitle

\tableofcontents

\section{Introduction}

In this paper we study the Neumann boundary value problem and layer potentials for higher order elliptic differential operators of the form
\begin{equation}\label{eqn:divergence}
Lu = (-1)^m \sum_{\abs{\alpha}=\abs{\beta}=m} \partial^\alpha (A_{\alpha\beta} \partial^\beta u),\end{equation}
where $m$ is a positive integer, and with coefficients $\mat A$ that are $t$-independent in the sense that
\begin{equation}\label{eqn:t-independent}\mat A(x,t)=\mat A(x,s)=\mat A(x) \quad\text{for all $x\in\R^n$ and all $s$, $t\in\R$}.\end{equation}
Our coefficients may be merely bounded measurable in the $n$ horizontal variables.
We remark that $t$-independent coefficients have been studied extensively in the second order case (the case $2m=2$). See, for example, \cite{JerK81A,KenP93,KenKPT00,KenR09,Rul07, AusAH08,AusAM10A, AlfAAHK11, Bar13, HofKMP15A, HofKMP15B, HofMitMor15, BarM16A, MaeM16, AusS16, MaeM17, AmeA18, AusM19}. $t$-independent coefficients in the higher order case have received much more limited study; Hofmann and Mayboroda together with the author of the present paper have begun their study in \cite{BarHM17,BarHM17pA,BarHM17pB,BarHM18,BarHM18p}.

Specifically, in \cite{BarHM18}, we established the following result. Suppose that $L$ is an operator of the form~\eqref{eqn:divergence} associated to coefficients $\mat A$ that are $t$-independent, bounded, self-adjoint in the sense that $A_{\alpha\beta}=\overline{A_{\beta\alpha}}$ whenever $\abs\alpha=\abs\beta=m$, and satisfy the ellipticity condition
\begin{equation}
\label{eqn:elliptic:slices}
\re\int_{\R^n}
\sum_{\abs\alpha=\abs\beta=m}\overline{\partial^\alpha\varphi(x,t)}\, A_{\alpha\beta}(x)\,\partial^\beta\varphi(x,t)\rangle\,dx \geq \lambda\doublebar{\nabla^m \varphi(\,\cdot\,,t)}_{L^2(\R^n)}^2
\end{equation}
for all $t\in \R$, all $\varphi\in C^\infty_0(\R^\dmn)$, and some $\lambda>0$ independent of $t$ and~$\varphi$. Then for every $\arr g\in L^2(\R^n)$ there is a solution~$w$, unique up to adding polynomials of degree~$m-1$, to the $L^2$ Neumann problem
\begin{equation}
\label{eqn:neumann:regular:2}
\left\{\begin{gathered}\begin{aligned}
Lw&=0 \text{ in }\R^\dmn_+
,\\
\M_{\mat A}^+ w &\owns \arr g,
\end{aligned}\\
\doublebar{\mathcal{A}_2^+(t\nabla^m \partial_t w)}_{L^2(\R^n)} + \sup_{t>0}\doublebar{\nabla^{m}w(\,\cdot\,,t)}_{L^2(\R^n)}
\leq C\doublebar{\arr g}_{L^2(\R^\dmnMinusOne)}
.\end{gathered}\right.\end{equation}

Here
$\mathcal{A}_2^+$ is the Lusin area integral given (in $\R^\dmn_+$) by
\begin{equation}\label{dfn:lusin:+}\mathcal{A}_2^+ H(x) = \biggl(\int_0^\infty \int_{\abs{x-y}<t} \abs{H(y,t)}^2 \frac{dy\,dt}{t^\dmn}\biggr) \quad\text{for all $x\in\R^n$}.\end{equation}
We adopt the convention that if a $t$ appears inside the argument of a tent space operator such as $\mathcal{A}_2^+$, then it denotes the $\pdmn$th coordinate function.

$\M_{\mat A}^+ w$ denotes the Neumann boundary values of $w$, and is the equivalence class of functions given by
\begin{equation}
\label{eqn:Neumann:intro}
\arr g\in \M_{\mat A}^+ w \text{ if }
\sum_{\abs\gamma=m-1}\int_{\R^n} \partial^\gamma\varphi(x,0)\,g_\gamma\,dx
= \sum_{\abs\alpha=\abs\beta=m} \int_{\R^\dmn_+} \partial^\alpha \varphi\,A_{\alpha\beta}\,\partial^\beta w\end{equation}
for all smooth test functions~$\varphi$ that are compactly supported in~$\R^\dmn$. An integration by parts argument shows that the right hand side depends only on the behavior of $\varphi$ near the boundary, and so $\M_{\mat A}^+ w$ is well defined as an operator on the space $\{\nabla^{m-1}\varphi\big\vert_{\partial\R^\dmn_+}:\varphi\in C^\infty_0(\R^\dmn)\}$. In the second order case $2m=2$, $\scalarM_{\mat A}^+ w$ consists of a single function or distribution; however, if $m\geq 2$, then by equality of mixed partials $\M_{\mat A}^+ w$ contains many arrays of distributions, and so is indeed an equivalence class. This is the formulation of Neumann boundary data used in \cite{Bar17,BarHM17,BarHM17pB,BarHM18,BarHM18p}, and is closely related to the Neumann boundary values for the bilaplacian in \cite{CohG85,Ver05,She07B,MitM13B} and for general constant coefficient systems in \cite{MitM13A,Ver10}. We refer the reader to \cite{BarM16B,BarHM17} for further discussion of higher order Neumann boundary data.

Let $\dot W^{1,q}(\R^n)$ denote the homogeneous Sobolev space in $\R^n$ with $\doublebar{\varphi}_{\dot W^{1,q}(\R^n)}=\doublebar{\nabla_\pureH \varphi}_{L^q(\R^n)}$, where $\nabla_\pureH$ denotes the gradient in $\R^n$ (rather than $\R^\dmn$). If $1/p+1/p'=1$, let $\dot W^{-1,p}(\R^n)$ be the dual space $(\dot W^{1,p'}(\R^n))^*$ to $\dot W^{1,p'}(\R^n)$. A second result of \cite{BarHM18} is that for every $\arr g$ in the negative Sobolev space $\dot W^{-1,2}(\R^n)$, there is a solution~$v$, unique up to adding polynomials of degree~$m-2$, to the $\dot W^{-1,2}$ Neumann problem
\begin{equation}
\label{eqn:neumann:rough:2}
\left\{\begin{gathered}\begin{aligned}
Lv&=0 \text{ in }\R^\dmn_+
,\\
\M_{\mat A}^+ v &\owns \arr g,
\end{aligned}\\
\doublebar{\mathcal{A}_2^+(t\nabla^m v)}_{L^2(\R^n)} + \sup_{t>0}\doublebar{\nabla^{m-1}v(\,\cdot\,,t)}_{L^2(\R^n)}
\leq C\doublebar{\arr g}_{\dot W^{-1,2}(\R^\dmnMinusOne)}
.\end{gathered}\right.\end{equation}
In this case the definition of $\M_{\mat A}^+ v$ is more delicate, because $\nabla^m v$ need not be locally integrable up to the boundary of $\R^\dmn_+$. We refer the reader to \cite[Section~2.3.2]{BarHM17pB} for the precise formulation of the Neumann boundary values $\M_{\mat A}^+ v$ of a solution $v$ to $Lv=0$ with ${\mathcal{A}_2^+(t\nabla^m v)}\in{L^2(\R^n)}$.

The main result of the paper \cite{BarHM18p} was that the solutions $w$ and $v$ to the problems~\eqref{eqn:neumann:regular:2} and~\eqref{eqn:neumann:rough:2} also satisfy the estimates
\begin{align}
\label{eqn:neumann:N:2}
\doublebar{\widetilde{N}_+(\nabla^{m-1}v)}_{L^2(\R^n)}
&\leq C\doublebar{\arr g}_{\dot W^{-1,2}(\R^\dmnMinusOne)}
,&
\doublebar{\widetilde{N}_+(\nabla^{m}w)}_{L^2(\R^n)}
&\leq C\doublebar{\arr g}_{L^2(\R^\dmnMinusOne)}
,\end{align}
where $\widetilde N_+$ is the modified nontangential maximal operator introduced in \cite{KenP93} and given (in the half space) by
\begin{equation}
\label{dfn:NTM:modified:+}
\widetilde N_+ H(x) = \sup
\biggl\{\biggl(\fint_{B((y, s),{s}/2)} \abs{H(z,t)}^2\,dz\,dt\biggr)^{1/2}:
s>0,\>
\abs{x-y}< s
\biggr\}
.\end{equation}
We remark that if $\mat A$ is $t$-independent, then ${\widetilde{N}_+(\nabla^{m-1}v)}\in {L^2(\R^n)}$ is a stronger statement than $\sup_{t>0}\doublebar{\nabla^{m-1}v(\,\cdot\,,t)}_{L^2(\R^n)}<\infty$; see \cite[Lemma~3.20]{BarHM17pA}, reproduced as Lemma~\ref{lem:slices} below.

The first of the two main results of the present paper is the following theorem.

\begin{thm}\label{thm:neumann:p:rough}
Suppose that $L$ is an elliptic operator of the form~\eqref{eqn:divergence} (in the weak sense of formula~\eqref{eqn:weak} below) of order~$2m$ associated with coefficients $\mat A$ that are bounded, $t$-independent in the sense of formula~\eqref{eqn:t-independent}, self-adjoint (that is, $A_{\alpha\beta}(x)=\overline{A_{\beta\alpha}(x)}$ for all $\abs\alpha=\abs\beta=m$ and all $x\in\R^n$), and satisfy the ellipticity condition~\eqref{eqn:elliptic:slices}.

Then there is some $\varepsilon>0$ depending only on the dimension $\dmn$, the order $m$ of the operator~$L$, the number $\lambda$ in the bound~\eqref{eqn:elliptic:slices}, and $\doublebar{\mat A}_{L^\infty(\R^n)}$, such that, if $\dmn\geq 4$ and $2-\varepsilon<p<\frac{2n}{n-2}+\varepsilon$, or if $\dmn=2$ or $\dmn=3$ and $2-\varepsilon<p<\infty$,
then for every $\arr g\in \dot W^{-1,p}(\R^n)\cap \dot W^{-1,2}(\R^n)$ the solution $v$ to the problem~\eqref{eqn:neumann:rough:2} also satisfies
\begin{equation}
\label{eqn:neumann:rough:p}
\left\{\begin{gathered}\begin{aligned}
Lv&=0 \text{ in }\R^\dmn_+
,\\
\M_{\mat A}^+ v &\owns \arr g,
\end{aligned}\\
\doublebar{\mathcal{A}_2^+(t\nabla^m v)}_{L^p(\R^n)} + \doublebar{\widetilde{N}_+(\nabla^{m-1}v)}_{L^p(\R^n)}
\leq C_p\doublebar{\arr g}_{\dot W^{-1,p}(\R^\dmnMinusOne)}
\end{gathered}\right.\end{equation}
where $\M_{\mat A}^+ v$ is as defined in \cite{BarHM18}, and where $C_p$ depends only on $p$, $n$, $m$, $\lambda$, and~$\doublebar{\mat A}_{L^\infty(\R^n)}$.
\end{thm}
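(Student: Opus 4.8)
The plan is to run a good-$\lambda$ argument in the spirit of Shen's work, bootstrapping from the known $p=2$ estimates \eqref{eqn:neumann:rough:2} and \eqref{eqn:neumann:N:2}. Fix $\arr g\in\dot W^{-1,p}\cap\dot W^{-1,2}$ and let $v$ be the corresponding $p=2$ solution. We want to control $\doublebar{\widetilde N_+(\nabla^{m-1}v)}_{L^p}$ and $\doublebar{\mathcal A_2^+(t\nabla^m v)}_{L^p}$; since the Lusin integral of $t\nabla^m v$ is pointwise dominated by a constant times $\widetilde N_+(\nabla^{m-1}v)$ up to harmless terms (using interior Caccioppoli estimates, which hold for $L$-solutions by \eqref{eqn:weak}), the essential point is the nontangential estimate. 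For the subunitary range $\tfrac1p<\tfrac12$ one typically establishes a \emph{reverse} self-improvement: a weak-type or good-$\lambda$ inequality relating $\widetilde N_+(\nabla^{m-1}v)$ on a surface ball to an average of itself plus a controllable error built from the boundary data $\arr g$. Concretely, on a boundary ball $\Delta=\Delta(x_0,r)$ I would split $\arr g=\arr g\,\1_{2\Delta}+\arr g\,\1_{(2\Delta)^c}=:\arr g_1+\arr g_2$, let $v_i$ solve \eqref{eqn:neumann:rough:2} with data $\arr g_i$ (using linearity and uniqueness modulo polynomials of degree $m-2$), handle $v_1$ by the $L^2$ bound \eqref{eqn:neumann:N:2} together with the Sobolev embedding $\dot W^{-1,2}\hookrightarrow\dot W^{-1,p}$ on the compactly supported piece, and handle $v_2$ — which solves $Lv_2=0$ in a larger region and has small Neumann data near $\Delta$ — by interior regularity: higher-order Caccioppoli and reverse Hölder / Meyers-type estimates for $\nabla^{m-1}v_2$, which upgrade $L^2$ averages to $L^{p}$ averages for $p$ in a range around $2$ depending only on $n,m,\lambda,\doublebar{\mat A}_{L^\infty}$. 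This is exactly where the dimensional restriction $p<\tfrac{2n}{n-2}+\varepsilon$ enters, via the Sobolev exponent $2^*=\tfrac{2n}{n-2}$ controlling the self-improvement of the reverse Hölder inequality.

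The key steps, in order, would be: \textbf{(1)} Record the pointwise and local comparisons: $\mathcal A_2^+(t\nabla^m v)\lesssim \widetilde N_+(\nabla^{m-1}v)+(\text{area term near }t=0)$ using Caccioppoli on Whitney boxes, so it suffices to bound $\widetilde N_+(\nabla^{m-1}v)$ in $L^p$ and separately control the near-boundary area piece directly from $\arr g$. \textbf{(2)} Establish the interior estimates for $L$-solutions: Caccioppoli $\doublebar{\nabla^m u}_{L^2(Q)}\lesssim r^{-1}\doublebar{\nabla^{m-1}u}_{L^2(2Q)}$ (modulo the right polynomial) and the resulting reverse Hölder inequality for $\nabla^{m-1}u$ with exponent $2^*$, hence with some exponent $2+\delta$ for the full gradient chain; these are standard given \eqref{eqn:weak} and $t$-independence. \textbf{(3)} Prove the good-$\lambda$ / weak reverse Hölder inequality for $\widetilde N_+(\nabla^{m-1}v)$: on each surface ball, decompose the data as above, bound the local solution $v_1$ using \eqref{eqn:neumann:N:2} and $\doublebar{\arr g_1}_{\dot W^{-1,2}}\lesssim r^{n(1/2-1/p)}\doublebar{\arr g}_{\dot W^{-1,p}(2\Delta)}$-type inequalities (care is needed because $\dot W^{-1,p}$ is a negative-order \emph{dual} space, so the truncation must be done at the level of the dual pairing, or via the representation of $\dot W^{-1,p}$ functionals as $\Div$ of $L^p$ vector fields), and bound $v_2$ by interior reverse Hölder plus the vanishing of its Neumann data on $2\Delta$. \textbf{(4)} Feed the resulting inequality into Shen's real-variable machinery (the $L^p$ extrapolation lemma that converts a weak reverse Hölder inequality with one auxiliary bounded function into an $L^p$ bound for $p$ in a small interval above the base exponent) to conclude \eqref{eqn:neumann:rough:p} for $2-\varepsilon<p<2^*+\varepsilon$, and for $n=2,3$ note $2^*=\infty$ so the range is $(2-\varepsilon,\infty)$; the lower range $2-\varepsilon<p<2$ comes either from the same lemma applied to the adjoint formulation or by interpolation with the $p=2$ endpoint.

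I expect the main obstacle to be Step \textbf{(3)}, specifically the interaction between the \emph{negative}-order, \emph{non-local} boundary space $\dot W^{-1,p}$ and the localization needed for a good-$\lambda$ argument: unlike the $L^p$-Dirichlet or Regularity settings, here the data lives in a dual space, so "restricting $\arr g$ to $2\Delta$" is not literally meaningful and one must instead work with $\arr g$ represented as (an equivalence class of) divergences of $L^{p'}$-duals or use a Hodge-type decomposition adapted to the half-space, then track how the higher-order Neumann pairing \eqref{eqn:Neumann:intro} sees such a truncation. A secondary difficulty is bookkeeping the polynomials: solutions are only unique modulo degree $m-2$ (or $m-1$) polynomials, and the decomposition $v=v_1+v_2$ must be arranged so that these ambiguities cancel in the gradient quantities $\nabla^{m-1}v$, $\nabla^m v$ that actually appear in the estimate; this is routine but must be done carefully since $\widetilde N_+$ is not translation invariant under adding polynomials of degree $\geq m-1$. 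The interior reverse Hölder step \textbf{(2)} and the real-variable extrapolation \textbf{(4)} I regard as essentially mechanical given the cited prior work, with \eqref{eqn:weak} supplying exactly the Caccioppoli-type control needed.
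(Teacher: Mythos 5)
Your overall architecture (represent $\arr g=\Div\arr h$ with $\arr h\in L^p$, truncate $\arr h$ rather than $\arr g$, run Shen's good-$\lambda$ machinery on $\widetilde N_+(\nabla^{m-1}v)$) matches the paper, and you correctly identified the localization issue for the dual space. But there is a genuine gap at the heart of your Step (3): you propose to handle $v_2$, the solution whose Neumann data vanishes near $2\Delta$, by ``interior reverse H\"older'' estimates. This is not an interior problem --- $v_2$ is a solution in $\R^\dmn_+$ with vanishing Neumann data on part of the \emph{boundary}, and for bounded measurable $t$-independent coefficients no interior estimate reaches the boundary. The paper needs two nontrivial ingredients here that your proposal never supplies. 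First (Lemma~\ref{lem:N:trace:neumann}), one extends $u=v-v_Q$ across the boundary by subtracting $\D^{\mat A}(\arr f_{10Q})$ where $\arr f_{10Q}$ is a localization of $u$'s own Dirichlet trace: the jump relation~\eqref{eqn:D:jump} kills the trace jump and the continuity relation~\eqref{eqn:D:cts} preserves the vanishing Neumann data, so the corrected function solves $Lu_Q=0$ in a full two-sided box and the interior reverse H\"older (Lemma~\ref{lem:N:solution}) applies --- at the cost of an error term $\doublebar{\Tr_{m-1}^+u}_{L^{p}(11Q)}$ and of needing the extended-range bounds on $\D^{\mat A}$ from Theorem~\ref{thm:potentials:N:+}. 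Second (Lemma~\ref{lem:Neumann:zero:reverse}), that error term is controlled by a Rellich-type identity bounding $\doublebar{\Tr_m^+u}_{L^2(12Q)}$ by $\ell(Q)^{-1}\doublebar{\widetilde N_+(\nabla^{m-1}u)}_{L^2(14Q)}$; this is precisely where the self-adjointness of $\mat A$ and the slice-wise ellipticity~\eqref{eqn:elliptic:slices} are used, and your proposal never invokes either hypothesis. The exponent $\frac{2n}{n-2}$ then enters through Gagliardo--Nirenberg--Sobolev on $\R^n$ applied to $\Tr_{m-1}u$ given the $L^2$ control of $\Tr_m u$, not through interior self-improvement as you suggest. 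A related unaddressed point: applying the Rellich lemma to $v-v_Q$ requires $\widetilde N_+(\nabla^m(v-v_Q))\in L^2$, i.e.\ one must know that the solution of the $\dot W^{-1,2}$ problem with data in $L^2\cap\dot W^{-1,2}$ simultaneously solves the $L^2$ problem (the compatibility Lemma~\ref{lem:compatible}), which is itself proved by rerunning the layer potential method.

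Two smaller issues. Your Step (1) asserts a pointwise domination $\mathcal A_2^+(t\nabla^m v)\lesssim\widetilde N_+(\nabla^{m-1}v)$ ``up to harmless terms''; this is false pointwise (Caccioppoli on Whitney boxes produces a divergent $\int_0^\infty dt/t$), and the paper instead proves the $L^p$-norm comparison by a separate good-$\lambda$ argument (Lemma~\ref{lem:lusin:+}) that exploits the square-function structure and the $L^2$ square-function bound on the localized solutions. And your proposed derivation of the range $2-\varepsilon<p<2$ by ``interpolation with the $p=2$ endpoint'' does not work (interpolating $p=2$ with $p>2$ cannot produce $p<2$); the duality route you mention is what the author defers to the forthcoming work.
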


The technical requirement $\arr g\in \dot W^{-1,p}(\R^n)\cap \dot W^{-1,2}(\R^n)$, rather than merely $\arr g\in \dot W^{-1,p}(\R^n)$, is due to difficulties in defining $\M_{\mat A}^+ v$. Specifically, as mentioned above, $\nabla^m v$ need not be locally integrable up to the boundary and so we must use the definition of $\M_{\mat A}^+ v$ formulated in \cite{BarHM17pB} rather than formula~\eqref{eqn:Neumann:intro}. One of the main results of \cite{BarHM17pB} is that if $Lu=0$ in $\R^\dmn_+$ and $\mathcal{A}_2^+(t\nabla^m v)\in L^p(\R^n)$ for some $p$ with $1<p\leq 2$, then this formulation of
$\M_{\mat A}^+ v$ exists and lies in $\dot W^{-1,p}(\R^n)$. If $\mathcal{A}_2^+(t\nabla^m v)\in L^p(\R^n)$ for $p>2$, then $\M_{\mat A}^+ v$ is only guaranteed to exist under the additional technical assumption that $\nabla^m v\in L^2(\R^n\times(\varepsilon,\infty))$ for all $\varepsilon>0$; by requiring that $\arr g\in \dot W^{-1,2}(\R^n)$ and so $\mathcal{A}_2^+(t\nabla^m v)\in L^2(\R^n)$, we ensure that $v$ satisfies this condition and so $\M_{\mat A}^+ v$ exists.

In a forthcoming paper \cite{Bar19pC}, we will show that solutions to the problem~\eqref{eqn:neumann:rough:p} are unique. We will remove the technical requirement $\nabla^m v\in L^2(\R^n\times(\varepsilon,\infty))$ from the main result of \cite{BarHM17pB}, so that $\M_{\mat A}^+ v$ exists whenever $\mathcal{A}_2^+(t\nabla^m v)\in L^p(\R^n)$; a density argument will imply existence of solutions to the problem~\eqref{eqn:neumann:rough:p} for all $\arr g\in \dot W^{-1,p}(\R^n)$ rather than merely all $\arr g\in \dot W^{-1,p}(\R^n)\cap \dot W^{-1,2}(\R^n)$. Finally, we will establish well posedness of the Neumann problem with boundary data in a Lebesgue space $L^p(\R^n)$ rather than in a negative Sobolev space $\dot W^{-1,p}(\R^n)$ for a range of $p$ dual to that of Theorem~\ref{thm:neumann:p:rough}.

\subsection{Layer potentials}
\label{sec:intro:potentials}

We now discuss the method of proof of \cite{BarHM18,BarHM18p}. This is the classic method of layer potentials. The second main result of this paper (Theorem~\ref{thm:potentials} below) consists of some bounds on layer potentials that will be of use in the present paper and are of interest in their own right.

The double and single layer potentials for Laplace's equation are given by
\begin{align*}
\D^I_\Omega f(X)&=\int_{\partial\Omega} \nu(Y)\cdot \nabla_Y E^{-\Delta}(X,Y)\,f(Y)\,d\sigma(Y),
\\
\s^{-\Delta}_\Omega g(X)&=\int_{\partial\Omega} E^{-\Delta}(X,Y)\,g(Y)\,d\sigma(Y)
\end{align*}
where $\nu$ is the unit outward normal to~$\partial\Omega$ and $E^{-\Delta}$ is the fundamental solution for $-\Delta$ given by $E^{-\Delta}(X,Y)=-\frac{1}{2\pi}\log\abs{X-Y}$ (in $\R^2$) or $E^{-\Delta}(X,Y)=c_n\abs{X-Y}^{1-n}$ (in $\R^\dmn$, $n\geq 2$). We remark that if $-\Delta u=0$ in~$\Omega$, then the classical Neumann boundary values $\nu\cdot \nabla u$ of~$u$ coincide with the boundary values $\scalarM_I^+ u$ given by formula~\eqref{eqn:Neumann:intro}.
Layer potentials have a number of useful properties: for reasonably well behaved domains $\Omega$ and input functions $f$ and~$g$, we have that \begin{equation*}-\Delta\D^I_\Omega f=-\Delta\s^{-\Delta}_\Omega g=0\text{ in }\Omega\text{ and in }\R^\dmn\setminus\overline\Omega,\end{equation*}
and the jump and continuity relations
\begin{equation*}
\begin{aligned}
-\D^I_\Omega f\big\vert_{\partial\Omega} +\D^I_\Omega f\big\vert_{\partial W} &= f,
&
\nu_\Omega\cdot\nabla\D^I_\Omega f\big\vert_{\partial\Omega} +\nu_W\cdot\nabla\D^I_\Omega f\big\vert_{\partial W} &= 0,
\\
-\s^{-\Delta}_\Omega g\big\vert_{\partial\Omega} +\s^{-\Delta}_\Omega g\big\vert_{\partial W} &= 0,
&
\nu_\Omega\cdot\nabla\s^{-\Delta}_\Omega g\big\vert_{\partial\Omega} +\nu_W\cdot\nabla\s^{-\Delta}_\Omega g\big\vert_{\partial W} &= g,
\end{aligned}
\end{equation*}
where $W=\R^\dmn\setminus\overline\Omega$, $\nu_\Omega$ is the unit outward normal to~$\Omega$, and $\nu_W=-\nu_\Omega$ is the unit outward normal to~$W$.

Layer potentials may be generalized from $L=-\Delta$ to more general elliptic operators in such a way that analogues to the above useful properties are true. In the second order case, the generalization is straightforward once a fundamental solution has been constructed. Such potentials have been studied in many papers, including \cite{DahKV88,FabKV88,Fab88,Gao91,She07B,KenR09,Rul07,AlfAAHK11,MitM11,Bar13,HofMitMor15, HofKMP15B, HofMayMou15,GraH16,BarM16A}. (An equivalent formulation involving semigroups was given in \cite{Ros13} and used in \cite{AusM14,AusS16,AmeA18,AusM19}.) In the higher order case, layer potentials can be generalized either by using the fundamental solution and a careful integration by parts (see \cite{Agm57,CohG83,CohG85,Ver05,She07B,MitM13A,MitM13B}), or by using the Lax-Milgram theorem to directly construct functions that obey appropriate jump relations (see \cite{Bar17,BarHM17} or Sections~\ref{sec:dfn:D} and~\ref{sec:dfn:S} below).

The classic method of layer potentials constructs a solution to the Neumann problem
\begin{equation*}Lu=0 \text{ in }\Omega,\qquad \M_{\mat A}^\Omega u\owns\arr g\end{equation*}
by showing that either $\arr f\to \M_{\mat A}^\Omega \D^{\mat A}_\Omega \arr f$ or $\arr h\to \M_{\mat A}^\Omega \s^L_\Omega \arr h$ is invertible between appropriate function spaces and letting $u= \D^{\mat A}_\Omega (\M_{\mat A}^\Omega \D^{\mat A}_\Omega)^{-1}\arr g$ or $u=\s^L_\Omega(\M_{\mat A}^\Omega \s^L_\Omega)^{-1}\arr g$. Then $Lu=0$ in $\Omega$ because $L(\D^{\mat A}_\Omega\arr f)=0$ or $L(\s^L_\Omega\arr h)=0$ for all $\arr f$ or~$\arr h$, and $\M_{\mat A}^\Omega u\owns \arr g$ by definition. Estimates on solutions to the Neumann problem, such as the nontangential and area integral estimates in the problem~\eqref{eqn:neumann:rough:p}, may be derived from estimates on layer potentials.

This method was used in \cite{FabJR78,Ver84,DahK87,FabMM98,Zan00,May05} in the case of harmonic functions, in \cite{DahKV88,FabKV88,Fab88,Gao91,She07B} for second order constant coefficient systems, in \cite{AlfAAHK11,Bar13,HofMitMor15,HofKMP15B,BarM16A} for second order operators with variable $t$-inde\-pen\-dent coefficients, in \cite{Agm57,CohG83,CohG85,Ver05,She07B,MitM13A,MitM13B} for higher order operators with constant coefficients, and in \cite{BarHM18} for higher order operators with variable $t$-independent coefficients. In the case of higher order $t$-independent operators of interest in the present paper, extensive preliminaries were necessary.

The main results of \cite{BarHM17,BarHM17pA} were the $p\leq 2$ cases of the following estimates on layer potentials. (The $p>2$ cases were established later in \cite{BarHM18p}.)
\begin{align}
\label{eqn:S:lusin:2}
\doublebar{\mathcal{A}_2^*(t\nabla^m\partial_t\s^L\arr g)}_{L^p(\R^n)}
&\leq C_p\doublebar{\arr g}_{L^p(\R^n)}
,&2-\varepsilon&< p<2+\varepsilon
,\\
\label{eqn:D:lusin:2}
\doublebar{\mathcal{A}_2^*(t\nabla^m\partial_t\D^{\mat A}\arr \varphi)}_{L^p(\R^n)}
&\leq C_p\doublebar{\arr \varphi}_{\dot W\!A^{1,p}_{m-1}(\R^n)}
,&2&\leq p<2+\varepsilon
,\\
\label{eqn:S:lusin:rough:2}
\doublebar{\mathcal{A}_2^*(t\nabla^m\s^L_\nabla\arr h)}_{L^p(\R^n)}
&\leq C_p\doublebar{\arr h}_{L^p(\R^n)}
,&2-\varepsilon&< p<2+\varepsilon
,\\
\label{eqn:D:lusin:rough:2}
\doublebar{\mathcal{A}_2^*(t\nabla^m\D^{\mat A}\arr f)}_{L^p(\R^n)}
&\leq C_p\doublebar{\arr f}_{\dot W\!A^{0,p}_{m-1}(\R^n)}
,&2&\leq p<2+\varepsilon
.\end{align}
The operators $\D^{\mat A}$ and $\s^L$ are the generalizations mentioned above of $\D^{I}_\Omega$ and $\s^{-\Delta}_\Omega$ in the case $\Omega=\R^\dmn_+$. See Sections~\ref{sec:dfn:D} and~\ref{sec:dfn:S} below.
The modified single layer potential $\s^L_\nabla$ was introduced in \cite{BarHM17pA} based on an analogous operator in the second order case used in \cite{AlfAAHK11,HofMayMou15,HofMitMor15}; we remark that the estimate~\eqref{eqn:S:lusin:rough:2} is equivalent to the two estimates \eqref{eqn:S:lusin:2} and \begin{equation}
\label{eqn:S:lusin:rough:intro:AS16}
\doublebar{\mathcal{A}_2^*(t\nabla^m\s^L\arr g)}_{L^p(\R^n)}\leq C_p\doublebar{\arr g}_{\dot W^{-1,p}(\R^n)}.\end{equation}

These estimates are valid for operators $L$ of the form~\eqref{eqn:divergence} associated to coefficients~$\mat A$ that are bounded, $t$-independent in the sense of formula~\eqref{eqn:t-independent}, and satisfy the ellipticity condition
\begin{equation}
\label{eqn:elliptic}
\re\int_{\R^\dmn}
\sum_{\abs\alpha=\abs\beta=m}\overline{\partial^\alpha\varphi(x,t)}\, A_{\alpha\beta}(x)\,\partial^\beta\varphi(x,t)\rangle\,dx\,dt \geq \lambda\doublebar{\nabla^m \varphi}_{L^2(\R^\dmn)}^2
\end{equation}
for some $\lambda>0$ independent of~$\varphi$. Observe that this is a weaker condition than the condition~\eqref{eqn:elliptic:slices}. The number $\varepsilon$ depends only on $\lambda$, $\doublebar{\mat A}_{L^\infty(\R^n)}$, the order $2m$ of the operator~$L$, and the dimension~$\dmn$, and $C_p$ depends only on $p$, $\lambda$, $\doublebar{\mat A}_{L^\infty(\R^n)}$, $n$, and~$m$.

These estimates are valid for $\arr g$ and $\arr h$ in dense subspaces of $L^p(\R^n)$ for the indicated range of~$p$, and for $\arr \varphi$ and $\arr f$ that satisfy $\arr \varphi(x)=\nabla^{m-1} \Phi(x,0)$ and $\arr f(x)=\nabla^{m-1}F(x,0)$ for some functions $\Phi$, $F\in C^\infty_0(\R^\dmn)$. We may extend $\s^L$ and $\s^L_\nabla$ by density to operators on all of $L^p(\R^n)$, and $\D^{\mat A}$ to an operator on closed subspaces of the Sobolev space $\dot W^{1,p}(\R^n)$ and the Lebesgue space $L^p(\R^n)$ (which we denote by $\dot W\!A^{1,p}_{m-1}(\R^n)$, $\dot W\!A^{0,p}_{m-1}(\R^n)$). If $m\geq 2$, then by equality of mixed partials, $\dot W\!A^{k,p}_{m-1}(\R^n)$ is a proper subspace of $\dot W^{1,p}(\R^n)$ or $L^p(\R^n)$.

$\mathcal{A}_2^*$ is the two-sided area integral given by
\begin{equation}\label{dfn:lusin:*}\mathcal{A}_2^* H(x) = \biggl(\int_{-\infty}^\infty \int_{\abs{x-y}<\abs{t}} \abs{H(y,t)}^2 \frac{dy\,dt}{\abs{t}^\dmn}\biggr) \quad\text{for all $x\in\R^n$}.\end{equation}

In \cite{BarHM18}, we used these four bounds, the trace theorems of \cite{BarHM17pB}, the classic method of layer potentials described above, and some extensions to the classic method of layer potentials pioneered in \cite{Ver84,BarM13,BarM16A} and extended to the higher order case in \cite{Bar17}, to establish existence and uniqueness of solutions to the Neumann problems \eqref{eqn:neumann:regular:2} and~\eqref{eqn:neumann:rough:2}. In particular, the solutions $w$ and $v$ were both given as $\D^{\mat A} ((\M_{\mat A}^+\D^{\mat A})^{-1}\arr g)$, and invertibility of the operator 
\begin{align*}\M_{\mat A}^+\D^{\mat A}:\dot W\!A^{1,2}_{m-1}(\R^n)&\to (\dot W\!A^{0,2}_{m-1}(\R^n))^*,\\ \M_{\mat A}^+\D^{\mat A}:\dot W\!A^{0,2}_{m-1}(\R^n)&\to (\dot W\!A^{1,2}_{m-1}(\R^n))^*\end{align*}
was established. Thus, the bounds
\begin{equation*}
\doublebar{\mathcal{A}_2^+(t\nabla^m \partial_t w)}_{L^2(\R^n)}\leq C\doublebar{\arr g}_{L^2(\R^n)}
,\qquad
\doublebar{\mathcal{A}_2^+(t\nabla^m v)}_{L^2(\R^n)}\leq C\doublebar{\arr g}_{\dot W^{-1,2}(\R^n)}
\end{equation*} follow directly from the bounds \eqref{eqn:D:lusin:2} and~\eqref{eqn:D:lusin:rough:2}.

In \cite{BarHM18p}, we showed that if $L$ and $\mat A$ are as in the bounds~\cref{eqn:S:lusin:2,eqn:D:lusin:2,eqn:D:lusin:rough:2,eqn:S:lusin:rough:2}, then we also have the nontangential bounds
\begin{align}
\label{eqn:S:N:2}
\doublebar{\widetilde N_*(\nabla^m \s^L\arr g)}_{L^p(\R^n)}
&\leq C_p\doublebar{\arr g}_{L^p(\R^n)}
,&2-\varepsilon&<p<2+\varepsilon
,\\
\label{eqn:S:N:rough:2}
\doublebar{\widetilde N_*(\nabla^{m-1} \s^L_\nabla\arr h)}_{L^p(\R^n)}
&\leq C_p\doublebar{\arr h}_{L^p(\R^n)}
,&2-\varepsilon&<p<2+\varepsilon
,\\
\label{eqn:D:N:2}
\doublebar{\widetilde N_*(\nabla^m \D^{\mat A}\arr \varphi)}_{L^p(\R^n)}
&\leq C_p\doublebar{\arr \varphi}_{\dot W\!A^{1,p}_{m-1}(\R^n)}
,&2-\varepsilon&<p<2+\varepsilon
,\\
\label{eqn:D:N:rough:2}
\doublebar{\widetilde N_*(\nabla^{m-1} \D^{\mat A}\arr f)}_{L^p(\R^n)}
&\leq C_p\doublebar{\arr f}_{\dot W\!A^{0,p}_{m-1}(\R^n)}
,&2-\varepsilon&<p<2+\varepsilon
\end{align}
for $\arr g$, $\arr h$ in $L^p(\R^n)$ and $\arr \varphi$, $\arr f$ in the Whitney-Sobolev or Whitney-Lebesgue spaces indicated.
Here $\widetilde N_*$ is the two-sided modified nontangential maximal function
\begin{equation}
\label{dfn:NTM:modified:*}
\widetilde N_* H(x) = \sup
\biggl\{\biggl(\fint_{B((y, s),\abs{s}/2)} \abs{H(z,t)}^2\,dz\,dt\biggr)^{1/2}:
s\in\R,\>
\abs{x-y}< \abs{s}
\biggr\}
.\end{equation}
As the problems~\eqref{eqn:neumann:regular:2} and~\eqref{eqn:neumann:rough:2} were solved using the method of layer potentials, the bounds \eqref{eqn:D:N:2} and \eqref{eqn:D:N:rough:2} immediately yield the bounds~\eqref{eqn:neumann:N:2}.

We remark that the bounds in \cite{BarHM18p} are generally stated in terms of the one-sided nontangential maximal operator~$\widetilde N_+$ (or $\mathcal{A}_2^+$); however, \cite[Section~3.3]{BarHM18p} gives the two-sided estimates.

The second of the two main results of the present paper is to expand the range of $p$ in the bounds \cref{eqn:S:lusin:2,eqn:D:lusin:2,eqn:D:lusin:rough:2,eqn:S:lusin:rough:2} and \cref{eqn:S:N:2,eqn:D:N:2,eqn:D:N:rough:2,eqn:S:N:rough:2}.
We will use these bounds on potentials to prove Theorem~\ref{thm:neumann:p:rough}.
In a forthcoming paper \cite{Bar19pC}, we will use these bounds to establish existence of solutions to the $L^p$ Neumann problem and uniqueness of solutions to the $\dot W^{-1,p}$ Neumann problem~\eqref{eqn:neumann:rough:p}. In future work, we hope to use the method of layer potentials to solve the Dirichlet problem as well as the Neumann problem.

To describe the ranges of $p$ in our results, we recall the higher order generalization of Meyers's reverse H\"older estimate proven in \cite{Cam80,AusQ00,Bar16}. Specifically, by \cite[Theorem~24]{Bar16}, if $j$ is an integer with $0\leq j\leq m$, and if $L$ is an operator of order $2m$ of the form~\eqref{eqn:divergence} associated to coefficients $\mat A$ that are uniformly bounded and satisfy the ellipticity condition~\eqref{eqn:elliptic}, then there is an extended real number $p_{j,L}^+$ in $(2,\infty]$ such that, if $0<q<p<p_{j,L}^+$, there is a constant $c(j,L,p,q)$ such that
\begin{equation}\label{eqn:Meyers}\biggl(\int_{B(X_0,r)} \abs{\nabla^{m-j}u}^p\biggr)^{1/p}
\leq \frac{c(j,L,p,q)}{r^{\pdmn(1/q-1/p)}} \biggl(\int_{B(X_0,2r)} \abs{\nabla^{m-j}u}^q\biggr)^{1/q}
\end{equation}
whenever $Lu=0$ in $B(X_0,2r)$.
As noted in \cite{Bar16}, it follows from the Gagliardo-Nirenberg-Sobolev inequality that
\begin{equation}
\label{eqn:Meyers:p}
\frac{1}{p_{j,L}^+} \leq \max\biggl(0, \frac{1}{p_{0,L}^+}-\frac{j}{\dmn}\biggr)
\leq \max\biggl(0, \frac{1}{2}-\frac{j}{\dmn}\biggr).\end{equation}
Furthermore, $p_{0,L}^+\geq 2+\varepsilon$ for some $\varepsilon$ depending only on $m$, $\dmn$, $\doublebar{\mat A}_{L^\infty}$ and the number $\lambda$ in the bound~\eqref{eqn:elliptic}, and the constant $c(j,L,p,q)$ may be bounded from above by a constant depending only on $p$, $q$ and the same parameters, at least for $1/q>1/p>1/(2+\varepsilon)-j/\pdmn$.

In Propositions \ref{prp:2d:p0} and~\ref{prp:Meyers:t-independent} below, we will show that if $\mat A$ is $t$-independent, then we have the stronger bounds
\begin{equation}\label{eqn:Meyers:bound}
\left\{\begin{aligned}
p_{0,L}^+=p_{1,L}^+&=\infty &\text{ if }\dmn=2,\\
p_{1,L}^+&=\infty &\text{ if }\dmn=3,\\
p_{1,L}^+&>\frac{2n}{n-2}+\varepsilon&\text{ if }\dmn\geq 4
\end{aligned}\right.\end{equation}
where as usual $\varepsilon$ depends only on $m$, $n$, $\lambda$, and $\doublebar{\mat A}_{L^\infty(\R^n)}$.

We will establish the following bounds on layer potentials.

\begin{thm}\label{thm:potentials} Suppose that $L$ is an operator of the form~\eqref{eqn:weak} of order~$2m$ associated with bounded coefficients $\mat A$ that satisfy the ellipticity condition \eqref{eqn:elliptic} and are $t$-independent in the sense of formula~\eqref{eqn:t-independent}.

Then the double and single layer potentials $\D^{\mat A}$, $\s^L$ and $\s^L_\nabla$, originally defined as in Sections~\ref{sec:dfn:D} and~\ref{sec:dfn:S} below, extend by density to operators that satisfy the following bounds for all $p$ in the given ranges and all inputs $\arr f$, $\arr g$, $\arr h$, and $\arr\varphi$ in the indicated spaces.
\begin{align}
\label{eqn:D:N:intro}
\doublebar{\widetilde N_*(\nabla^m\D^{\mat A}\arr \varphi)}_{L^p(\R^n)}
&\leq C(0,p)\doublebar{\arr \varphi}_{\dot W\!A_{m-1,1}(\R^n)}, & 2&\leq p<p_{0,L}^+
,\\
\label{eqn:S:N:intro}
\doublebar{\widetilde N_*(\nabla^m\s^L\arr g)}_{L^p(\R^n)}
&\leq C(0,p)\doublebar{\arr g}_{L^p(\R^n)}, & 2&\leq p<p_{0,L}^+
,\\
\label{eqn:D:N:rough:intro}
\doublebar{\widetilde N_*(\nabla^{m-1}\D^{\mat A}\arr f)}_{L^p(\R^n)}
&\leq C(1,p)\doublebar{\arr f}_{\dot W\!A_{m-1}^{0,p}(\R^n)}, & 2&\leq p<p_{1,L}^+
,\\
\label{eqn:S:N:rough:intro}
\doublebar{\widetilde N_*(\nabla^{m-1}\s^L_\nabla\arr h)}_{L^p(\R^n)}
&\leq C(1,p)\doublebar{\arr h}_{L^p(\R^n)}, & 2&\leq p<p_{1,L}^+
,\\
\label{eqn:D:lusin:rough:intro}
\doublebar{\mathcal{A}_2^*(t\nabla^m\D^{\mat A}\arr f)}_{L^p(\R^n)}
&\leq C(1,p)\doublebar{\arr f}_{\dot W\!A_{m-1}^{0,p}(\R^n)}, & 2&\leq p<p_{1,L}^+
,\\
\label{eqn:S:lusin:rough:intro}
\doublebar{\mathcal{A}_2^*(t\nabla^m\s^L_\nabla\arr h)}_{L^p(\R^n)}
&\leq C(1,p)\doublebar{\arr h}_{L^p(\R^n)}, & 2&\leq p<p_{1,L}^+
,\\
\label{eqn:D:lusin:intro}
\doublebar{\mathcal{A}_2^*(t\nabla^m\partial_t\D^{\mat A}\arr \varphi)}_{L^p(\R^n)}
&\leq C(1,p)\doublebar{\arr \varphi}_{\dot W\!A_{m-1}^{1,p}(\R^n)}, & 2&\leq p<p_{1,L}^+
,\\
\label{eqn:S:lusin:intro}
\doublebar{\mathcal{A}_2^*(t\nabla^m\partial_t\s^L\arr g)}_{L^p(\R^n)}
&\leq C(1,p)\doublebar{\arr g}_{L^p(\R^n)}, & 2&\leq p<p_{1,L}^+
.\end{align}
Here the numbers $p_{j,L}^+$ are as in the bound~\eqref{eqn:Meyers}, and in particular satisfy the bound~\eqref{eqn:Meyers:bound}. The constants $C(j,p)$ depend only on the standard parameters $m$, $n$, $\lambda$, $\doublebar{\mat A}_{L^\infty(\R^n)}$, the number $p$, and the constants $c(j,L,p,2)$ in the bound~\eqref{eqn:Meyers}.
\end{thm}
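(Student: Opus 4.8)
The plan is to bootstrap each of the eight estimates from its $p=2$ instance --- the estimates \eqref{eqn:S:lusin:2}--\eqref{eqn:D:lusin:rough:2} and \eqref{eqn:S:N:2}--\eqref{eqn:D:N:rough:2} are valid in particular at $p=2$ --- by a self-improvement argument in the spirit of Shen, with the reverse Hölder inequality \eqref{eqn:Meyers} (applied with exponent $q=2$, so that the relevant constant is $c(j,L,p,2)$) serving as the engine that pushes the exponent up to, but not including, $p_{j,L}^+$. The eight cases run in parallel, so I would present one, say \eqref{eqn:S:N:intro} for $\widetilde N_*(\nabla^m\s^L\arr g)$, and indicate the modifications for the rest. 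The index $j$ enters through the number of derivatives landing on a solution: for the two estimates on $\widetilde N_*(\nabla^m\,\cdot\,)$ of a potential one must apply \eqref{eqn:Meyers} to $\nabla^m u$ for $u$ a solution, whence the exponent $p_{0,L}^+$; in the six estimates carrying a factor of $t$ the interior Caccioppoli inequality gives $\abs{t\,\nabla^m u(X)}\lesssim(\fint_{B(X,\abs t/2)}\abs{\nabla^{m-1}u}^2)^{1/2}$, and, since $\mat A$ is $t$-independent so that $\partial_t u$ is again a solution of $Lu=0$, likewise $\abs{t\,\nabla^m\partial_t u(X)}\lesssim(\fint_{B(X,\abs t/2)}\abs{\nabla^{m-1}\partial_t u}^2)^{1/2}$ with $\nabla^{m-1}\partial_t u=\nabla^{m-1}(\partial_t u)$; thus only $\nabla^{m-1}$ of a solution appears and \eqref{eqn:Meyers} may be used with $j=1$, giving the larger exponent $p_{1,L}^+$.

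The main tool is a real-variable lemma of the type introduced by Shen, equivalently a good-$\lambda$ inequality: if $F\in L^2(\R^n)$ is nonnegative (and, say, locally bounded), $f\geq 0$ lies in $L^p(\R^n)$, and if for every ball $B\subset\R^n$ one can write $F\leq F_B+R_B$ on $B$ with $(\fint_B\abs{F_B}^2)^{1/2}\leq C_1(\fint_{\sigma B}\abs f^2)^{1/2}$ and $(\fint_B\abs{R_B}^p)^{1/p}\leq C_1(\fint_{\sigma B}\abs F^2)^{1/2}+C_1(\fint_{\sigma B}\abs f^2)^{1/2}$ for a fixed dilation factor $\sigma>1$, then $\doublebar F_{L^p(\R^n)}\leq C\doublebar f_{L^p(\R^n)}$ whenever $2<p<q$, where $q$ is the exponent available in the reverse Hölder bound for $R_B$. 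I would apply this with $F=\widetilde N_*(\nabla^m\s^L\arr g)$, $f=\abs{\arr g}$, and $\arr g$ ranging over the dense class of inputs used in \cite{BarHM17pA}; then $F\in L^2(\R^n)$ by \eqref{eqn:S:N:2} at $p=2$, and the lemma yields \eqref{eqn:S:N:intro} for $2<p<p_{0,L}^+$, hence, the case $p=2$ being already known, for all $2\leq p<p_{0,L}^+$. Since that class is dense in $L^p(\R^n)$, a limiting argument extends $\s^L$ to an operator on $L^p(\R^n)$ satisfying the same bound.

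To verify the hypotheses of the lemma, fix $B=B(x_0,r)$, choose $\sigma$ large, and split $\arr g=\arr g\1_{\sigma B}+\arr g(1-\1_{\sigma B})=:\arr g_0+\arr g_\infty$, so that $\s^L\arr g=u_0+u_\infty$ with $u_0=\s^L\arr g_0$, $u_\infty=\s^L\arr g_\infty$, and, by subadditivity of $\widetilde N_*$, $F\leq\widetilde N_*(\nabla^m u_0)+\widetilde N_*(\nabla^m u_\infty)=:F_B+R_B$ on $B$. The bound on $F_B$ is immediate from \eqref{eqn:S:N:2} at $p=2$: $\doublebar{F_B}_{L^2(\R^n)}\leq C\doublebar{\arr g_0}_{L^2(\R^n)}=C\doublebar{\arr g}_{L^2(\sigma B)}$, hence $(\fint_B\abs{F_B}^2)^{1/2}\leq C(\fint_{\sigma B}\abs{\arr g}^2)^{1/2}$. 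For $R_B$, the point is that since $\arr g_\infty$ vanishes on $\sigma B$, the jump and continuity relations for the single layer potential (the jump of the conormal derivative across $\R^n\times\{0\}$ equals $\arr g_\infty$ and $\nabla^{m-1}\s^L\arr g_\infty$ is continuous there; see Section~\ref{sec:dfn:S}) force $u_\infty$ to be a weak solution of $Lu_\infty=0$ in a ball $B((x_0,0),\rho)\subset\R^\dmn$ with $\rho\sim\sigma r$. Consequently the estimate \eqref{eqn:Meyers}, in its (standard) consequence for the nontangential maximal function of a solution, applies to $\widetilde N_*^{\,\mathrm{loc}}(\nabla^m u_\infty)$ --- the version of $\widetilde N_*$ restricted to Whitney heights $\lesssim r$ --- and bounds its $L^p(B)$-average, for any $p<p_{0,L}^+$, by its $L^2(cB)$-average; finally $\widetilde N_*^{\,\mathrm{loc}}(\nabla^m u_\infty)\leq F+\widetilde N_*(\nabla^m u_0)$, and the $L^2(cB)$ norms of the two terms are controlled by $(\fint_{cB}\abs F^2)^{1/2}$ and, again via \eqref{eqn:S:N:2} at $p=2$, by $(\fint_{\sigma B}\abs{\arr g}^2)^{1/2}$, respectively.

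The main obstacle is the contribution to $R_B(x)=\widetilde N_*(\nabla^m u_\infty)(x)$, $x\in B$, of Whitney regions of large height $\abs s\gg r$ (and, in the square-function estimates, the analogous contribution from the tail of the cone), since these protrude from the ball on which $u_\infty$ solves the equation and the interior argument above breaks down there. I would handle this by running the whole argument on a fixed large ball $B_0$, with $\widetilde N_*$ and $\mathcal A_2^*$ truncated to heights $\abs s\leq\diam B_0$, proving the $L^p(B_0)$ estimate with constant independent of $B_0$ --- the truncated large-scale part being absorbed using the $p=2$ bounds together with the decay of the relevant kernels --- and then letting $B_0\nearrow\R^n$. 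For the estimates on $\D^{\mat A}$ there is the further, routine point that the cutoff decomposition of the data must be replaced by one respecting the Whitney structure of $\dot W\!A^{k,p}_{m-1}(\R^n)$: one cuts off the defining function rather than the array and controls the lower-order terms produced by the Leibniz rule. With these settled, the remaining seven estimates follow by the same scheme: one uses the corresponding $p=2$ estimate among \eqref{eqn:S:lusin:2}, \eqref{eqn:S:lusin:rough:2}, \eqref{eqn:D:N:2}, \eqref{eqn:D:N:rough:2}, \eqref{eqn:S:N:rough:2}, \eqref{eqn:D:lusin:2}, \eqref{eqn:D:lusin:rough:2} for the near part, and for the far part applies \eqref{eqn:Meyers} with $j=0$ for $\widetilde N_*(\nabla^m\D^{\mat A}\arr\varphi)$ and with $j=1$ in all other cases --- in the four cases carrying a factor of $t$ after the Caccioppoli reduction to $\nabla^{m-1}$ of a solution noted above --- so that the range of $p$ is $2\leq p<p_{0,L}^+$ for \eqref{eqn:D:N:intro} and \eqref{eqn:S:N:intro} and $2\leq p<p_{1,L}^+$ in the other six cases. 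The explicit values \eqref{eqn:Meyers:bound} of $p_{0,L}^+$ and $p_{1,L}^+$ for $t$-independent $\mat A$ come from Propositions~\ref{prp:2d:p0} and~\ref{prp:Meyers:t-independent} and are not needed for the extrapolation itself.
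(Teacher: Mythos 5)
Your overall architecture is the paper's: extrapolate from the $p=2$ bounds of \cite{BarHM17pA,BarHM18p} by a Shen-type good-$\lambda$ argument (the paper's Lemma~\ref{lem:lambda}), splitting the data into a near piece handled by the $L^2$ theory and a far piece whose potential solves $Lu=0$ in a cylinder over the enlarged cube and is therefore subject to the reverse H\"older inequality \eqref{eqn:Meyers}; the bookkeeping of the index $j$ that produces $p_{0,L}^+$ versus $p_{1,L}^+$ is also right. Two steps, however, do not go through as written. The first is the far-height contribution that you yourself flag as the main obstacle. Truncating $\widetilde N_*$ and $\mathcal A_2^*$ at height $\diam B_0$ for a large ambient ball $B_0$ and letting $B_0\nearrow\R^n$ does not touch the problematic \emph{intermediate} heights $r\ll\abs s\leq\diam B_0$, which still protrude from the region where $u_\infty$ is a solution; and ``decay of the relevant kernels'' is not available for coefficients that are merely bounded and measurable. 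The correct resolution (Lemmas~\ref{lem:N:far} and~\ref{lem:lusin:far}, and the third term in the splitting \eqref{eqn:E:split}) is that the far part of $\widetilde N_*$ of the \emph{full} function $u$ --- not of $u_\infty$ --- is essentially constant on $B$, since $\widetilde N_f^{\ell}\arr u(x)^2\leq 2^n\fint_{\abs{x-z}<\ell}\widetilde N_f^{\ell}\arr u(z)^2\,dz$; under the stopping-time condition $\fint_{15Q}(\widetilde N_*\arr u)^2\leq C\lambda$ it therefore never exceeds $C\lambda^{1/2}$ and contributes an empty set to the level set once $A$ is large. One then only ever needs the reverse H\"older bound for the near part $\widetilde N_n^{\ell}$ of the solution piece, which is what Lemma~\ref{lem:N:solution} supplies.

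The second gap is in the square function estimates. You propose to obtain an $L^{p_2}$ reverse H\"older inequality for the truncated cone functional of the solution piece from the Caccioppoli bound $\abs{t\nabla^m u_\infty(X)}\lesssim\bigl(\fint_{B(X,\abs t/2)}\abs{\nabla^{m-1}u_\infty}^2\bigr)^{1/2}$ together with \eqref{eqn:Meyers} with $j=1$. But that pointwise bound, summed over the Whitney regions of a cone, produces a divergent logarithm; there is no pointwise domination of $\mathcal A_n^{\ell}(t\nabla^m u_\infty)$ by a nontangential maximal function, only the $L^2(8Q)$ bound $\doublebar{\mathcal A_n^{\ell}(t\nabla^m u_\infty)}_{L^2(8Q)}\lesssim\doublebar{\widetilde N_*(\nabla^{m-1}u_\infty)}_{L^2(10Q)}$, and so no $L^{p_2}$ reverse H\"older for the square function of a solution follows from the one for its maximal function. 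The paper sidesteps this in Lemma~\ref{lem:lusin:+}: the near-cone square function of the solution piece is estimated only in $L^2$ and is absorbed into the ``good'' term by adjoining $\widetilde N_*(\nabla^{m-1}u)$ to the controlling function $\Phi$ in the good-$\lambda$ lemma --- which is legitimate precisely because the $L^p$ nontangential estimates have been proven first. So the eight bounds are not ``the same scheme run in parallel'': the four area integral bounds require the corresponding nontangential bounds as input, and the order of the argument matters.
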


We remark that if $2m=2$, then many cases of theorem are known.

If $2m=2$, and if $\mat A$ is constant or if $\dmn=2$, then all eight of the bounds~\cref{eqn:D:N:rough:intro,eqn:S:N:rough:intro,eqn:D:lusin:rough:intro,eqn:S:lusin:rough:intro,eqn:D:N:intro,eqn:S:N:intro,eqn:D:lusin:intro,eqn:S:lusin:intro} are valid for all $p$ with $1< p<\infty$. See \cite[Theorem~12.7]{AusS16}.

If $2m=2$ and the well known De Giorgi-Nash-Moser regularity conditions are valid,
(which is true if $\mat A$ is real and $2m=2$, and which by \cite[Appendix~B]{AlfAAHK11} is true for complex $t$-independent coefficients in dimension $\dmn=3$),
it was shown in \cite{AusM14,HofKMP15B,HofMayMou15,HofMitMor15}
that there is some $\varepsilon>0$ such that
the four bounds
\cref{eqn:D:N:rough:intro,eqn:S:N:rough:intro,eqn:D:lusin:rough:intro,eqn:S:lusin:rough:intro}
are valid for $2-\varepsilon<p<\infty$,
the bounds
\cref{eqn:S:N:intro,eqn:D:N:intro}
are valid for $1<p<2+\varepsilon$,
and the bound~\eqref{eqn:S:lusin:intro} is valid for $1<p<\infty$.
If $2m=2$ and the coefficients~$\mat A$ are real, then the bound~\eqref{eqn:D:lusin:intro} follows from the bound~\eqref{eqn:D:N:intro} with the same value of~$p$ and from \cite[Theorem~1.7]{HofKMP15A}, and so is valid for $1<p<2+\varepsilon$.

Finally, in \cite[Theorem~12.7]{AusS16}, it was established that for general second order $t$-independent systems in dimension $\dmn\geq 4$,
the three bounds
\cref{eqn:D:lusin:rough:intro,eqn:S:N:rough:intro,eqn:D:N:rough:intro} and the special case~\eqref{eqn:S:lusin:rough:intro:AS16}
of the bound~\eqref{eqn:S:lusin:rough:intro}
are valid for $2-\varepsilon<p<\frac{2n}{n-2}+\varepsilon$, and the four bounds
\cref{eqn:S:lusin:intro,eqn:D:N:intro,eqn:D:lusin:intro,eqn:S:N:intro}
are valid for  $\frac{2n}{n+2}-\varepsilon<p<2+\varepsilon$, where $\varepsilon$ is a positive number.

Recall that if $\dmn\geq 4$ then $p_{1,L}^+\geq\frac{2n}{n-2}+\varepsilon$; we remark that the De Giorgi-Nash-Moser condition implies that $p_{1,L}^+=\infty$, and if $\mat A$ is constant then $p_{0,L}^+=\infty$. To the author's knowledge, even if $2m=2$, if $\dmn\geq 3$ and the coefficients are variable then the $2+\varepsilon\leq p<p_{1,L}^+$ case of the bound~\eqref{eqn:D:lusin:intro} is new; the $2+\varepsilon\leq p<p_{1,L}^+$ case of the bound~\eqref{eqn:S:lusin:intro} is known for coefficients that satisfy the De Giorgi-Nash-Moser condition but is new for general second order operators.

\subsection{The Neumann subregularity problem}

In this section and in Section~\ref{sec:intro:extrapolation} we will discuss the historical context of Theorem~\ref{thm:neumann:p:rough}. Specifically, in this section we will discuss the history of well posedness results for the Neumann problem, especially the Neumann problem with $\dot W^{-1,p}$ boundary data, while in the next section we will discuss the historical antecedents of our particular method of proof of well posedness.

The Neumann problem for a general system of (possibly higher order) elliptic equations in a domain~$\Omega$ may be written as
\begin{align}
\label{eqn:neumann:general}
(L\vec u)_j=\sum_{\substack{\abs\alpha=m\\\abs\beta=m}} \sum_{k=1}^N \partial^\alpha(A_{\alpha\beta}^{jk}\partial^\beta u_k)&=0 \text{ in }\Omega\text{ for }1\leq j\leq N,
&
\M_{\mat A}^\Omega \vec u &\owns\arr g
,\end{align}
where $\M_{\mat A}^\Omega \vec u $ is given by
\begin{equation}
\label{eqn:neumann:system}
\arr g\in \M_{\mat A}^\Omega \vec u \text{ if }
\sum_{j=1}^N\sum_{\abs\gamma=m-1}\int_{\partial\Omega} \partial^\gamma\varphi_j\,g_{j,\gamma}\,d\sigma
= \sum_{\substack{\abs\alpha=m\\\abs\beta=m}} \sum_{j,k=1}^N \int_\Omega \partial^\alpha \varphi_j\,A^{jk}_{\alpha\beta}\,\partial^\beta u_k.\end{equation}

We mention some applications of the Neumann problem.
As observed in \cite{She07B}, an appropriate choice of coefficients $\mat A$ shows that the traction boundary problem for the Lam\'e system of elastostatics
\begin{equation}
\label{eqn:Lame}
\left\{\begin{aligned}
\mu\Delta \vec u + (\mu+\lambda) \nabla\Div \vec u&=0 \text{ in }\Omega,
\\
\mu (\nabla \vec u+(\nabla \vec u)^T)\nu
+
\lambda \nu \Div \vec u &= \vec g \text{ on }\partial\Omega
,
\end{aligned}\right.
\end{equation}
is a problem of the form~\eqref{eqn:neumann:general} with boundary data of the form~\eqref{eqn:neumann:system}.

The inhomogeneous Neumann problem for the bilaplacian with zero boundary data is given by
\begin{equation}\label{eqn:neumann:biharmonic:inhomogeneous}
(-\Delta)^2 u=h \text{ in }\Omega,
\quad \vecM^\Omega_\rho u\owns\vec 0\text{ on }\partial\Omega.\end{equation}
where $\vecM^\Omega_\rho$ is the Neumann boundary operator for the biharmonic equation given by the formula
\begin{equation}
\label{eqn:neumann:biharmonic}
\vec g\in \vecM^\Omega_\rho u\text{ if }
\int_{\Omega} \rho\Delta u\Delta \varphi +(1-\rho)\sum_{j,k=1}^\dmn \partial_{x_jx_k}u\,\partial_{x_jx_k}\varphi
-\Delta^2 u\,\varphi
= \int_{\partial\Omega} \vec g\cdot \nabla \varphi
\end{equation}
for all sufficiently smooth test functions~$\varphi$. The number $\rho$ is called the Poisson ratio. This inhomogeneous problem describes a thin elastic plate with free edges, acted on by vertical forces of surface density~$h$. The Poisson ratio is physically meaningful and depends on the material of the plate.
See \cite{Nad63,GirN95,Ver05,Swe09} or the survey paper \cite{BarM16B}.

The theory of the Neumann problem is closely tied to the theory of the Dirichlet problem, which may be written as
\begin{equation}
\label{eqn:Dirichlet:system}
L\vec u=\vec 0 \text{ in }\Omega,\quad\nabla^{m-1} \vec u=\arr f \text{ on }\partial \Omega
,\end{equation}
where $L$ is as in the problem~\eqref{eqn:neumann:general}.
Both problems have been investigated for many different operators~$L$. It is common to study the Dirichlet problem with the estimate
\begin{equation}
\label{eqn:N:dirichlet}
\doublebar{\widetilde N_\Omega(\nabla^{m-1} \vec u)}_{L^p(\partial\Omega)}
\leq C \doublebar{\arr f}_{L^p(\partial\Omega)}\end{equation}
where $\widetilde N_\Omega$ is either the standard nontangential maximal function $N_\Omega $ given by $N_\Omega H(X)=\sup\{\abs{H(Y)}:\abs{X-Y}<2\dist(Y,\partial\Omega)\}$, or the modified nontangential maximal operator introduced in \cite{KenP93}. See, for example, \cite{Dah79} (the Laplace operator $L=-\Delta$), \cite{JerK81A} (second order operators with real symmetric $t$-independent coefficients), \cite{KenKPT00,HofKMP15A} (second order operators with real nonsymmetric $t$-independent coefficients), \cite{DahKV88,FabKV88,Gao91,She06A} (second order systems with real symmetric constant coefficients), \cite{MarMMM16} (second order systems with constant coefficients in the half-space), \cite{DahKV86,PipV92,She06A} (the bilaplacian $L=(-\Delta)^2$), and \cite{PipV95A,Ver96} (higher order systems with real symmetric constant coefficients).

It is also common to study the Dirichlet problem with the estimate
\begin{equation}
\label{eqn:N:regularity}
\doublebar{\widetilde N_\Omega(\nabla^{m} \vec u)}_{L^p(\partial\Omega)}
\leq C \doublebar{\arr f}_{\dot W^{1,p}(\partial\Omega)}\end{equation}
where ${\dot W^{1,p}(\partial\Omega)}$ is the boundary Sobolev space of functions whose tangential derivatives lie in~$L^p(\partial\Omega)$. This is often called the Dirichlet regularity problem.
See, for example, \cite{JerK81B,Ver84} ($L=-\Delta$), \cite{KenP93} (second order operators with real symmetric $t$-independent coefficients), \cite{KenR09,Rul07,HofKMP15B} (second order operators with real nonsymmetric $t$-independent coefficients), \cite{DahKV88,FabKV88,Gao91,She06A,MarMMM16} (second order systems with constant coefficients), \cite{Ver90,PipV92,KilS11B} (the bilaplacian $L=(-\Delta)^2$), and \cite{PipV95A,Ver96} (higher order systems with real symmetric constant coefficients).

By contrast, the Neumann problem has been studied primarily under the estimate
\begin{equation}
\label{eqn:N:neumann}\doublebar{\widetilde N_\Omega(\nabla^{m} \vec u)}_{L^p(\partial\Omega)}
\leq C \doublebar{\arr g}_{L^p(\partial\Omega)}.\end{equation}
See, for example, \cite{JerK81B,DahK87} ($L=-\Delta$), \cite{KenP93} (second order operators with real symmetric $t$-independent coefficients), \cite{KenR09,Rul07} (second order operators in dimension $\dmn=2$ with real nonsymmetric $t$-independent coefficients), \cite{DahKV88,FabKV88,She07B} (second order systems, such as the Lam\'e system~\eqref{eqn:Lame}, with real symmetric constant coefficients), \cite{CohG85,Ver05,She07B} (the bilaplacian $L=(-\Delta)^2$), and \cite{BarHM18} (higher order self-adjoint operators with $t$-independent coefficients).

It has only been relatively recently that the Neumann problem has been considered for $\doublebar{\widetilde N_\Omega(\nabla^{m-1} u)}\in{L^p(\partial\Omega)}$. For such a problem to be well posed, the Neumann boundary data must have one fewer degree of smoothness than in the bound~\eqref{eqn:N:neumann}. Thus, the Neumann subregularity problem is the problem~\eqref{eqn:neumann:general} together with the estimate
\begin{equation}
\label{eqn:N:neumann:subregular}\doublebar{\widetilde N_\Omega(\nabla^{m-1} \vec u)}_{L^p(\partial\Omega)}
\leq C \doublebar{\arr g}_{\dot W^{-1,p}(\partial\Omega)}.\end{equation}
Even for the Laplace operator, the Neumann subregularity problem was not studied until \cite{Ver05}, when it was needed to solve the standard Neumann problem (with the estimate~\eqref{eqn:N:neumann}) for the biharmonic operator~$(-\Delta)^2$. The subregularity problem for $(-\Delta)^2$ was also studied in \cite{Ver05}. Inspired by \cite{Ver05}, Hofmann, Mayboroda and the author chose to include results for the subregularity problem for higher order self-adjoint operators with $t$-independent coefficients in \cite{BarHM18}.

We would also like to mention \cite{AusM14,AusS16}, and the theory of boundary value problems in fractional smoothness spaces. \cite{AusM14} contains numerous extrapolation and duality type results for second order elliptic systems with $t$-independent coefficients that satisfy a boundary regularity condition, and in particular contains a duality result between the standard Neumann problem and the subregular Neumann problem. \cite{AusS16} considers the equivalences of norms
\begin{align*}
\doublebar{\widetilde N_+(\nabla \vec w)}_{L^p(\R^n)}
\approx \doublebar{\mathcal{A}_2^+(t\nabla\partial_t \vec w)}_{L^p(\R^n)}
&\approx \doublebar{\vec w(\,\cdot\,,0)}_{\dot W^{1,p}(\R^n)} + \doublebar{\scalarM_{\mat A}^+ \vec w}_{L^p(\R^n)}
,\\
\doublebar{\widetilde N_+ \vec v}_{L^p(\R^n)}
\approx \doublebar{\mathcal{A}_2^+(t\nabla \vec v)}_{L^p(\R^n)}
&\approx \doublebar{\vec v(\,\cdot\,,0)}_{L^p(\R^n)} + \doublebar{\scalarM_{\mat A}^+ \vec v}_{\dot W^{-1,p}(\R^n)} \end{align*}
for solutions $v$ and $w$ to second order $t$-independent systems in the upper half space. This encompasses both Fatou-type results (showing that nontangentially bounded solutions do have boundary values) and their converses; the full equivalence necessarily includes Neumann boundary values on the right hand side, and so must contend with subregular Neumann boundary values.

The Besov and Triebel-Lizorkin spaces $\dot B^{s,p}_q(\partial\Omega)$ and $F^{s,p}_q(\partial\Omega)$, where the parameter $s$ measures smoothness, may be viewed as lying between the spaces $L^p(\partial\Omega)$ and $\dot W^{1,p}(\partial\Omega)$ (if $0<s<1$) or between the spaces $L^p(\partial\Omega)$ and $\dot W^{-1,p}(\partial\Omega)$ (if $-1<s<0$). See, for example, the standard texts \cite{Tri83,RunS96}.

The Dirichlet problem has been studied with data in $\dot B^{s,p}_q(\partial\Omega)$ or $\dot F^{s,p}_q(\partial\Omega)$, $0<s<1$. See, for example, \cite{JerK95,MayMit04A} (the Laplace operator $L=-\Delta$), \cite{BarM16A,AmeA18} (second order operators with $t$-independent coefficients), \cite{MitMW11,MitM13B} (the bilaplacian $L=(-\Delta)^2$), \cite{MitM13A} (systems, possibly of higher order, with constant coefficients), and \cite{MazMS10} (systems with variable coefficients).

The Neumann problem has been studied for boundary data in fractional smoothness spaces; it is generally well posed only for negative orders of smoothness. See, for example, \cite{FabMM98,Zan00,MayMit04A} (the Laplace operator $L=-\Delta$), \cite{BarM16A,AmeA18} (second order operators with $t$-independent coefficients), \cite{MitM13B} (the bilaplacian $L=(-\Delta)^2$), and \cite{MitM13A} (systems with constant coefficients). Indeed \cite{FabMM98,Zan00,MayMit04A} predate \cite{Ver05}, and thus the fractional smoothness case was the first studied instance of the Neumann problem with boundary data in a negative smoothness space.

\subsection{Extrapolation techniques}
\label{sec:intro:extrapolation}

In this section we will discuss the historical antecedents of the method of proof of Theorems~\ref{thm:neumann:p:rough} and~\ref{thm:potentials}.
Specifically, we will discuss the papers \cite{She06A,She06B}, on which our argument is modeled, and related works.

Recall the general Dirichlet problem for a system~\eqref{eqn:Dirichlet:system}. Suppose that $\mat A$ is real, constant, and satisfies the symmetry condition $A_{\alpha\beta}^{jk}=A_{\beta\alpha}^{kj}$ and the Legendre-Hadamard ellipticity condition (see \cite{PipV95B}), and that $\Omega$ is a bounded Lipschitz domain.
In \cite{PipV95B} it was shown that if $p$ is sufficiently close to~$2$, then  the problem \eqref{eqn:Dirichlet:system}, with either the estimate~\eqref{eqn:N:dirichlet} or the estimate~\eqref{eqn:N:regularity}, is well posed.

In \cite{She06B}, Shen used good-$\lambda$ inequalities, the case $p$ near~$2$, and local regularity afforded by the $\dot W^{1,2}$ estimate~\eqref{eqn:N:regularity} to show that the Dirichlet problem with the $L^p$ estimate~\eqref{eqn:N:dirichlet} is well posed for $2<p<\frac{2n}{n-2}+\varepsilon$, where $\dmn\geq 4$ and $\Omega\subset\R^\dmn$.
This was done earlier in the special cases $2m=2$ and for the polyharmonic operator $(-\Delta)^m$ in \cite{She06A}. In the second order case $2m=2$, a duality argument allowed Shen to show that the regularity problem is well posed for $\frac{2n}{n+2}-\varepsilon<p<2$. (A similar duality argument for the biharmonic operator $\Delta^2$ was established in \cite{KilS11B}.)

Our proof of Theorem~\ref{thm:neumann:p:rough} will be modeled on the arguments of \cite{She06A,She06B}. In a forthcoming paper \cite{Bar19pC}, we will establish a duality argument that will allow us to prove a similar theorem regarding the Neumann problem with the estimate~\eqref{eqn:N:neumann} for $\arr g\in L^p(\R^n)$, $\frac{2n}{n+2}-\varepsilon<p<2$.

We wish to mention some other extrapolation arguments that will not be used in the present paper but which have been used in the past to prove results of interest.

First, similar arguments have been used for Neumann problems in the biharmonic and second order constant cases.

The homogeneous biharmonic Neumann problem is given by
\begin{equation*}
(-\Delta)^2 u=0 \text{ in }\Omega,
\quad \vecM^\Omega_\rho u\owns\vec g\text{ on }\partial\Omega,\end{equation*}
where $\vec M^\Omega_\rho$ is given by formula~\eqref{eqn:neumann:biharmonic}. (It complements the inhomogeneous problem~\eqref{eqn:neumann:biharmonic:inhomogeneous}.) It was shown in \cite{Ver05} that for some values of the parameter~$\rho$, if $p$ is close enough to $2$ then the biharmonic Neumann problem with the estimate~\eqref{eqn:N:neumann} is well posed. Similarly, it was shown in \cite{DahKV88} that the traction boundary value problem~\eqref{eqn:Lame} is well posed for boundary data in~$L^2(\partial\Omega)$ and with nontangential estimates. It was observed in \cite{She07B} that the arguments of \cite{DahKV88,FabKV88} imply well posedness of the $L^2$ Neumann problem for general second order systems of the form~\eqref{eqn:neumann:system} with real symmetric coefficients that satisfy the Legendre-Hadamard ellipticity condition.

In \cite{She07B}, Shen extrapolated from well posedness of the $L^2$ Neumann problem for second order systems and the biharmonic equation to well posedness of the $L^p$ Neumann problem, where $\frac{2n}{n+2}-\varepsilon<p<2$ in dimension $\dmn\geq 4$. We remark that the method of proof of \cite{She07B} is somewhat different from (and more complicated than) that of \cite{She06A,She06B}, as \cite{She06A,She06B} begin with $L^2$ well posedness and derive $L^p$ well posedness for certain values of $p$ greater than~$2$, while \cite{She07B} begins with $L^2$ well posedness and derives $L^p$ well posedness for certain values of~$p$ less than~$2$. In the present paper, despite our focus on the Neumann problem, we will use the methods that were used in \cite{She06A,She06B} to solve the Dirichlet problem rather than the more complicated methods used in \cite{She07B} to solve the Neumann problem. It is for this reason that the present paper proves results for the subregular problem~\eqref{eqn:neumann:rough:p} and leaves the $L^p$ analogue of the Neumann problem~\eqref{eqn:neumann:regular:2} to the forthcoming paper \cite{Bar19pC}.

We also wish to mention that Shen's extrapolation method is fairly new.

For some boundary value problems, the $L^p$ problem was solved simultaneously with the $L^2$ problem. See, for example, the harmonic $L^p$-Dirichlet problem in \cite{Dah79,JerK81A} and $\dot W^{1,p}$-regularity problem in \cite{Ver84}.

In the cases where the $L^2$-Neumann problem was established first and then used to solve the $L^p$-Neumann problem, a more common technique has been to show that the corresponding boundary value problem with boundary data in the Hardy space $H^1$ is well posed, and then interpolate to yield $L^p$ well posedness for $1<p<2$. This technique was used in \cite{DahK87} for the harmonic Neumann problem
\begin{equation*}-\Delta u=0 \text{ in }\Omega,\quad \nu\cdot \nabla u=g\text{ on }\partial\Omega,\quad \doublebar{N_\Omega(\nabla u)}_{L^p(\partial\Omega)}\leq C\doublebar{g}_{L^p(\partial\Omega)},\end{equation*}
in \cite{DahK90} for the Lam\'e system~\eqref{eqn:Lame} with the estimate~\eqref{eqn:N:neumann}, in \cite{KenP93} for the Neumann problem
\begin{equation*}\Div \mat A\nabla u=0 \text{ in }\Omega, \quad \nu\cdot \mat A \nabla u=g\text{ on }\partial\Omega,\quad \doublebar{\widetilde N_\Omega(\nabla u)}_{L^p(\partial\Omega)}\leq C\doublebar{g}_{L^p(\partial\Omega)},\end{equation*}
in a starlike Lipschitz domain~$\Omega$ with coefficients $\mat A$ that are real, symmetric, and radially independent, and in \cite{AusM14} for any second-order system of the form~\eqref{eqn:neumann:system} with variable $t$-independent coefficients, in the domain above a Lipschitz graph, that satisfy a boundary regularity condition and for which the $L^2$-Neumann problem is solvable. Similar extrapolation results were found in these four papers for the corresponding $\dot W^{1,p}$-regularity problem, and also in \cite{PipV92,PipV95A,Ver96} for the biharmonic or general symmetric constant coefficient regularity problem~\eqref{eqn:Dirichlet:system} with the estimate~\eqref{eqn:N:regularity} in a three-dimensional domain~$\Omega$.

The $H^1$ interpolation technique necessarily yields well posedness for $L^p$ boundary data for the entire range $1<p<2$. The sharp range of $p$ for which a higher order $L^p$-Neumann problem is well posed is not known, even for special cases such as the biharmonic Neumann problem. However, the range of $p$ for which the biharmonic $\dot W^{1,p}$-regularity problem
\begin{equation*}(-\Delta)^2 u=0 \text{ in }\Omega,
\quad \nabla u=\vec f\text{ on }\partial\Omega,
\quad
\doublebar{N_\Omega(\nabla^2 u)}_{L^p(\partial\Omega)}\leq C\doublebar{\vec f}_{\dot W^{1,p}(\partial\Omega)}\end{equation*}
is well posed in all Lipschitz domains $\Omega\subset\R^\dmn$
is known to be $[6/5,2]$ in dimension $\dmn=4$,  is $[4/3,2]$ in dimension $\dmn=5$, $6$, or~$7$, and is known to be a subset of $[4/3,2]$ in dimension $\dmn\geq 8$.
Similarly, the $L^p$-Dirichlet problem
\begin{equation*}(-\Delta)^2 u=0 \text{ in }\Omega,
\quad \nabla u=\vec f\text{ on }\partial\Omega,
\quad
\doublebar{N_\Omega(\nabla u)}_{L^p(\partial\Omega)}\leq C\doublebar{\vec f}_{L^p(\partial\Omega)}\end{equation*}
is well posed in all Lipschitz domains $\Omega\subset\R^\dmn$
if and only if $p$ is in $[2,6]$ ($\dmn=4$), in $[2,4]$ ($\dmn=5$, $6$, or~$7$), or in a (currently unknown) subset of $[2,4]$ ($\dmn\geq 8$).

This suggests that the Neumann problem~\eqref{eqn:neumann:regular:2}, with estimates in $L^p(\R^n)$ rather than $L^2(\R^n)$, is probably not well posed for the full range $1<p\leq 2$ in dimension $4$ and higher, and so $H^1$ interpolation techniques are probably not applicable. Similarly, the $\dot W^{-1,p}$-Neumann problem~\eqref{eqn:neumann:rough:p} is probably not well posed for the full dual range $2\leq p<\infty$.

\subsection{Outline}

The outline of this paper is as follows. In Section~\ref{sec:dfn} we will define our terminology, including supplying precise definitions for the Whitney-Sobolev spaces $\dot W\!A^{s,p}_{m-1}(\R^n)$ and of the double and single layer potentials $\D^{\mat A}$ and~$\s^L$.
In Section~\ref{sec:regular} we will state some known regularity results for solutions to elliptic equations, and will prove the bounds~\eqref{eqn:Meyers:bound} for $t$-independent coefficients.

In Section~\ref{sec:N:potentials} we will establish nontangential maximal estimates on layer potentials, that is, the estimates~\cref{eqn:S:N:rough:intro,eqn:D:N:rough:intro,eqn:S:N:intro,eqn:D:N:intro} in Theorem~\ref{thm:potentials}. In Section~\ref{sec:N:neumann} we will establish the nontangential component of Theorem~\ref{thm:neumann:p:rough} by showing that, if $\arr g$ lies in a dense subset of $\dot W^{-1,p}\cap \dot W^{-1,2}$, then the solution $v$ to the $\dot W^{-1,2}$-Neumann problem~\eqref{eqn:neumann:rough:2} satisfies the estimate $\doublebar{\widetilde N_+(\nabla^{m-1} v)}_{L^p(\R^n)}\leq C_p\doublebar{\arr g}_{\dot W^{-1,p}(\R^n)}$ for appropriate values of~$p$. The argument for the Neumann problem is much more involved than that for layer potentials; we remark that several of the lemmas proven in Section~\ref{sec:N:potentials} will be of use in Section~\ref{sec:N:neumann}.

Given nontangential estimates, passing to area integral estimates (for both layer potentials and solutions to the Neumann problem) is a relatively straightforward matter; we will do this in Section~\ref{sec:lusin:+}.

\subsection*{Acknowledgements}

The author would like to thank Steve Hofmann and Svitlana Mayboroda for many useful conversations on topics related to this paper. The author would also like to thank
the Mathematical Sciences Research Institute for hosting a Program on Harmonic Analysis,
the Instituto de Ciencias Matem\'aticas for hosting a Research Term on ``Real Harmonic Analysis and Its Applications to Partial Differential Equations and Geometric Measure Theory'',
and
the IAS/Park City Mathematics Institute for hosting a Summer Session with a research topic of Harmonic Analysis,
at which many of the results and techniques of this paper were discussed.

\section{Definitions}\label{sec:dfn}

In this section, we will provide precise definitions of the notation and concepts used throughout this paper.

We will always work with operators~$L$ of order~$2m$ in the divergence form~\eqref{eqn:divergence} (interpreted in the weak sense of formula~\eqref{eqn:weak} below) acting on functions defined in~$\R^\dmn$, $\dmn\geq 2$.

As usual, we let $B(X,r)$ denote the ball in $\R^\dmn$ of radius $r$ and center $X$. We let $\R^\dmn_+$ and $\R^\dmn_-$ denote the upper and lower half spaces $\R^n\times (0,\infty)$ and $\R^n\times(-\infty,0)$; we will identify $\R^n$ with $\partial\R^\dmn_\pm$.
If $Q$ is a cube (or interval), we will let $\ell(Q)$ be its side length (or length), and we let $cQ$ be the concentric cube of side length $c\ell(Q)$. If $E$ is a set of finite measure~$\abs{E}$, we let
\begin{equation*}\fint_E f(x)\,dx=\frac{1}{\abs{E}}\int_E f(x)\,dx.\end{equation*}

If $E$ is a measurable set in Euclidean space and $\arr H$ is a (possibly vector-valued or array-valued) globally defined function, we will let $\1_E \arr H=\chi_E \arr H$, where $\chi_E$ is the characteristic function of~$E$. If $\arr H:E\mapsto V$ is defined in all of $E$ for some vector space~$V$, but is not globally defined, we will let $\1_E \arr H$ be the extension of $\arr H$ by zero, that is,
\begin{equation*}\1_E \arr H(X)=\begin{cases} \arr H(X), & X\in E, \\ \arr 0_V, & \text{otherwise}.\end{cases}\end{equation*}
We will use $\1_\pm$ as a shorthand for $\1_{\R^\dmn_\pm}$.

We let $\nabla$ denote the standard gradient in~$\R^\dmn$. We will let $\nabla_\pureH$ denote either the gradient in $\R^n$, or the gradient in the first $n$ variables in $\R^\dmn$.

\subsection{Multiindices and arrays of functions}

We will routinely work with multiindices in~$(\N_0)^\dmn$. (We will occasionally work with multiindices in $(\N_0)^\dmnMinusOne$.) Here $\N_0$ denotes the nonnegative integers. If $\zeta=(\zeta_1,\zeta_2,\dots,\zeta_\dmn)$ is a multiindex, then we define $\abs{\zeta}$ and $\partial^\zeta$ in the usual ways, as $\abs{\zeta}=\zeta_1+\zeta_2+\dots+\zeta_\dmn$ and $\partial^\zeta=\partial_{x_1}^{\zeta_1}\partial_{x_2}^{\zeta_2} \cdots\partial_{x_\dmn}^{\zeta_\dmn}$.

Recall that a vector $\vec H$ is a list of numbers (or functions) indexed by integers $j$ with $1\leq j\leq N$ for some $N\geq 1$. We similarly let an array $\arr H$ be a list of numbers or functions indexed by multiindices~$\zeta$ with $\abs\zeta=k$ for some $k\geq 1$.
In particular, if $\varphi$ is a function with weak derivatives of order up to~$k$, then we view the gradient $\nabla^k\varphi$ as such an array.

The inner product of two such arrays of numbers $\arr F$ and $\arr G$ is given by
\begin{equation*}\bigl\langle \arr F,\arr G\bigr\rangle =
\sum_{\abs{\zeta}=k}
\overline{F_{\zeta}}\, G_{\zeta}.\end{equation*}
If $\arr F$ and $\arr G$ are two arrays of functions defined in a set $\Omega$ in Euclidean space, then the inner product of $\arr F$ and $\arr G$ is given by
\begin{equation*}\bigl\langle \arr F,\arr G\bigr\rangle_\Omega =
\int_\Omega \langle \arr F(X),\arr G(X)\rangle\,dX =
\sum_{\abs{\zeta}=k}
\int_{\Omega} \overline{F_{\zeta}(X)}\, G_{\zeta}(X)\,dX.\end{equation*}

We let $\vec e_j$ be the unit vector in $\R^\dmn$ in the $j$th direction; notice that $\vec e_j$ is a multiindex with $\abs{\vec e_j}=1$. We let $\arr e_{\zeta}$ be the unit array corresponding to the multiindex~$\zeta$; thus, $\langle \arr e_{\zeta}, \arr H\rangle = H_{\zeta}$.

\subsection{Function spaces and Dirichlet boundary values.}

Let $\Omega$ be a measurable set in Euclidean space. We let $C^\infty_0(\Omega)$ be the space of all smooth functions that are compactly supported in~$\Omega$. We let $L^p(\Omega)$ denote the usual Lebesgue space with respect to Lebesgue measure with norm given by
\begin{equation*}\doublebar{f}_{L^p(\Omega)}=\biggl(\int_\Omega \abs{f(x)}^p\,dx\biggr)^{1/p}.\end{equation*}

If $\Omega$ is a connected open set, then we let the homogeneous Sobolev space $\dot W^{k,p}(\Omega)$ be the space of equivalence classes of functions $u$ that are locally integrable in~$\Omega$ and have weak derivatives in $\Omega$ of order up to~$k$ in the distributional sense, and whose $k$th gradient $\nabla^k u$ lies in $L^p(\Omega)$. Two functions are equivalent if their difference is a polynomial of order at most~$k-1$.
We impose the norm
\begin{equation*}\doublebar{u}_{\dot W^{k,p}(\Omega)}=\doublebar{\nabla^k u}_{L^p(\Omega)}.\end{equation*}
Then $u$ is equal to a polynomial of order at most $k-1$ (and thus equivalent to zero) if and only if its $\dot W^{k,p}(\Omega)$-norm is zero.

If $1<p<\infty$, then we let $\dot W^{-1,p}(\R^n)$ denote the dual space to $\dot W^{1,p'}(\R^n)$, where $1/p+1/p'=1$; this is a space of distributions on~$\R^n$.

The use of a dot to denote homogeneous Sobolev spaces (as opposed to the inhomogeneous spaces $W^{k,p}(\Omega)$ with $\|u\|_{W^{k,p}(\Omega)}^p=\sum_{j=0}^k\doublebar{\nabla^j u}_{L^p(\Omega)}^p$) is by now standard. The use of a dot to denote arrays of functions is also standard (see, for example, \cite{Agm57,CohG83,CohG85,PipV95A,She06B,KilS11B,MitM13A,MitM13B}). We apologize for any confusion arising from this overloading of notation, but these established conventions seem to require it.

We say that $u\in L^p_{loc}(\Omega)$ or $u\in\dot W^{k,p}_{loc}(\Omega)$ if $u\in L^p(U)$ or $u\in\dot W^{k,p}(U)$ for all bounded open sets $U$ with $\overline U\subset\Omega$.

Following \cite{BarHM17pB}, we define the boundary values $\Trace^\pm u$ of a function $u$ defined in an appropriate subset of $\R^\dmn_\pm$ by
\begin{equation}
\label{eqn:trace}
\Trace^\pm  u
= f \text{ in $\Omega$}\quad\text{if}\quad
\lim_{t\to 0^\pm} \doublebar{u(\,\cdot\,,t)-f}_{L^1(K)}=0
\end{equation}
for all compact sets $K\subset\Omega$. We define
\begin{equation*}
\Tr_j^\pm u= \Trace^\pm \nabla^j u.\end{equation*}
We remark that if $\nabla u$ is locally integrable up to the boundary, then $\Trace^\pm u$ exists, and furthermore $\Trace^\pm u$ coincides with the traditional trace in the sense of Sobolev spaces. Furthermore, if $\nabla u$ is locally integrable in a neighborhood of the boundary, then $\Trace^+u=\Trace^-u$ as locally integrable functions; in this case we will refer to the boundary values (from either side) as $\Trace u$.

We are interested in functions with boundary data in Lebesgue or Sobolev spaces. However, observe that if $j\geq 1$, then the components of $\Tr_j^\pm u$ are derivatives of a common function and so must satisfy certain compatibility conditions. We thus define the following Whitney-Lebesgue, Whitney-Sobolev and Whitney-Besov spaces of arrays that satisfy these conditions.

\begin{defn} \label{dfn:Whitney}
Let
\begin{equation*}\mathfrak{D}=\{\Tr_{m-1}^+\varphi:\varphi\text{ smooth and compactly supported in $\R^\dmn$}\}.\end{equation*}

If $1\leq p<\infty$, then we let $\dot W\!A^{0,p}_{m-1}(\R^n)$ be the closure of the set $\mathfrak{D}$ in $L^p(\R^n)$.
We let $\dot W\!A^{1,p}_{m-1}(\R^n)$ be the closure of $\mathfrak{D}$ in $\dot W^{1,p}(\R^n)$. Finally, we let
$\dot W\!A^{1/2,2}_{m-1}(\R^n)$ be the closure of $\mathfrak{D}$ in the Besov space $\dot B^{1/2,2}_{2}(\R^n)$; the norm in this space may be written as
\begin{equation}
\label{eqn:B:norm}
\doublebar{ f}_{\dot B^{1/2,2}_{2}(\R^n)} = \biggl(\int_{\R^n}\abs{\widehat {f}(\xi)}^2\abs{\xi}\,d\xi\biggr)^{1/2}\end{equation}
where $\widehat f$ denotes the Fourier transform of~$f$.
\end{defn}

\begin{rmk}\label{rmk:W2:trace}
It is widely known that $\arr f\in \dot W\!A^{1/2,2}_{m-1}(\R^n)$ if and only if $\arr f=\Tr_{m-1}^+ F$ for some $F$ with $\nabla^m F\in L^2(\R^\dmn_+)$. This was essentially proven in \cite{Liz60,Jaw77}; see  \cite[Lemma~2.6]{BarHM17pA} for further discussion.
\end{rmk}

\begin{rmk} There is an extensive theory of Besov spaces (see, for example, \cite{Tri83,RunS96}). We will make use only of the Besov space $\dot B^{1/2,2}_2(\R^n)$ given by formula~\eqref{eqn:B:norm} and the space $\dot B^{-1/2,2}_{2}(\R^n)$. This space has norm
\begin{equation}
\label{eqn:B:norm:-}
\doublebar{g}_{\dot B^{-1/2,2}_{2}(\R^n)} = \biggl(\int_{\R^n}\abs{\widehat {g}(\xi)}^2\frac{1}{\abs{\xi}}\,d\xi\biggr)^{1/2}.
\end{equation}
The two properties of this space that we will use are, first, that ${\dot B^{-1/2,2}_{2}(\R^n)}$ is the dual space to ${\dot B^{1/2,2}_{2}(\R^n)}$, and, second, that $f\in {\dot B^{1/2,2}_{2}(\R^n)}$ if and only if the gradient $\nabla_\pureH f$ exists in the distributional sense and satisfies $\doublebar{\nabla_\pureH f}_{\dot B^{-1/2,2}_{2}(\R^n)}\approx \doublebar{f}_{\dot B^{1/2,2}_{2}(\R^n)}$.
\end{rmk}

\subsection{Elliptic differential operators and Neumann boundary values}
\label{sec:dfn:L}

Let $\mat A = \begin{pmatrix} A_{\alpha\beta} \end{pmatrix}$ be a matrix of measurable coefficients defined on $\R^\dmn$, indexed by multtiindices $\alpha$, $\beta$ with $\abs{\alpha}=\abs{\beta}=m$. If $\arr H$ is an array indexed by multiindices of length~$m$, then $\mat A\arr H$ is the array given by
\begin{equation*}(\mat A\arr H)_{\alpha} =
\sum_{\abs{\beta}=m}
A_{\alpha\beta} H_{\beta}.\end{equation*}

We let $L$ be the $2m$th-order divergence form operator associated with~$\mat A$. That is, we say that
\begin{equation}
\label{eqn:weak}
Lu=0 \text{ in }\Omega \text{ if }u\in \dot W^{m,2}_{loc}(\Omega) \text{ and } \langle \nabla^m \varphi, \mat A\nabla^m u\rangle_\Omega=0 \text{ for all } \varphi\in C^\infty_0(\Omega).
\end{equation}
This is the standard weak definition of divergence form operators with rough coefficients.

Throughout we require our coefficients to be pointwise bounded and to satisfy the
G\r{a}rding inequality~\eqref{eqn:elliptic}, which we restate here as
\begin{align*}
\re {\bigl\langle\nabla^m \varphi,\mat A\nabla^m \varphi\bigr\rangle_{\R^\dmn}}
&\geq
\lambda\doublebar{\nabla^m\varphi}_{L^2(\R^\dmn)}^2
\quad\text{for all $\varphi\in\dot W^{m,2}(\R^\dmn)$}
\end{align*}
for some $\lambda>0$.
In some cases, we will also require our coefficients to satisfy the stronger G\r{a}rding inequality~\eqref{eqn:elliptic:slices}.

Recall that the Neumann boundary values of a function $w$ that satisfies $Lw=0$ in $\R^\dmn_+$ and $\mathcal{A}_2^+(t\nabla^m\partial_t w)+\widetilde N_+(\nabla^m w)\in L^2(\R^n)$ are given by formula~\eqref{eqn:Neumann:intro}, that is, satisfy
\begin{equation*}\arr g\in \M_{\mat A}^+ w\quad\text{if}\quad
\langle \arr g,\Tr_{m-1}\varphi\rangle_{\R^n}=\langle \mat A\nabla^m w,\nabla^m\varphi\rangle_{\R^\dmn_+}\end{equation*}
for all $\varphi\in C^\infty_0(\R^n)$. We let $\M_{\mat A}^-$ be the analogous operator in the lower half space. We may view $\M_{\mat A}^+ w$ as either an equivalence class of array-valued distributions or as a linear operator on $\{\Tr_{m-1}\varphi:\varphi\in C^\infty_0(\R^\dmn)\}$.
By \cite[Theorem~6.2]{BarHM17pB}, for such $w$ we have that $\M_{\mat A}^+ w$ extends by density to a well defined bounded operator on~$\dot W\!A^{0,2}_{m-1}(\R^n)$.

By \cite[Lemma~2.4]{BarHM17pB}, if $u\in \dot W^{m,2}(\R^\dmn_+)$ and $Lu=0$ in $\R^\dmn_+$ then $\M_{\mat A}^+ u$ is a well defined operator on $\dot W\!A^{1/2,2}_{m-1}(\R^n)$, and formula~\eqref{eqn:Neumann:intro} is valid for $w=u$ and for all $\varphi\in \dot W^{m,2}(\R^\dmn)$.

If $Lv=0$ in $\R^\dmn$ and $\mathcal{A}_2^+(t\nabla^m v)\in L^2(\R^n)$, as in the problem~\eqref{eqn:neumann:rough:2}, then $\M_{\mat A}^+ v$ is as given in \cite{BarHM17pB}; by \cite[Theorem~6.1]{BarHM17pB}, $\M_{\mat A}^+ v$ is a well defined bounded operator on~$\dot W\!A^{1,2}_{m-1}(\R^n)$. Furthermore, if $Lu=0$ in $\R^\dmn_+$ and $\mathcal{A}_2^+(t\nabla^m u)+\mathcal{A}_2^+(t\nabla^m\partial_t u)+\widetilde N_+(\nabla^m u)\in L^2(\R^n)$, then the definitions of $\M_{\mat A}^+ u$ given by formula~\eqref{eqn:Neumann:intro} and \cite{BarHM17pB} coincide.

The numbers $C$ and $\varepsilon$ denote constants whose value may change from line to line, but which are always positive and depend only on the dimension~$\dmn$, the order $2m$ of any relevant operators, the norm $\doublebar{\mat A}_{L^\infty(\R^n)}$ of the coefficients, and the number $\lambda$ in the bound~\eqref{eqn:elliptic:slices} or~\eqref{eqn:elliptic}. Any other dependencies will be noted explicitly.
We say that $A\approx B$ if there are some positive constants $\varepsilon$ and~$C$ depending only on the above quantities such that $\varepsilon B\leq A\leq CB$.

The numbers $p_{j,L}^+$ are always as in the bound~\eqref{eqn:Meyers}.
The notation $C(j,p)$ denotes a constant that depends only on the standard parameters $n$, $m$, $\lambda$, $\doublebar{\mat A}_{L^\infty(\R^n)}$, the number~$p$, and the constant $c(j,L,p,2)$ in the bound~\eqref{eqn:Meyers}. (If $p\leq 2$ then $C(j,p)$ may be taken as depending only on $n$, $m$, $\lambda$, $\doublebar{\mat A}_{L^\infty(\R^n)}$, and~$p$.)

\subsection{The double layer potential}
\label{sec:dfn:D}

In this section we define the double layer potential of Theorem~\ref{thm:potentials}.

We begin with the related Newton potential.
For any $\arr H\in L^2(\R^\dmn)$, by the Lax-Milgram lemma there is a unique function $\Pi^L\arr H$ in $\dot W^{m,2}(\R^\dmn)$ that satisfies
\begin{equation}\label{eqn:newton}
\langle \nabla^m\varphi, \mat A\nabla^m \Pi^L\arr H\rangle_{\R^\dmn}=\langle \nabla^m\varphi, \arr H\rangle_{\R^\dmn}
\quad\text{for all $\varphi\in \dot W^{m,2}(\R^\dmn)$.}
\end{equation}
We refer to the operator $\Pi^L$ as the Newton potential.

Suppose that $\arr f\in \dot W\!A^{1/2,2}_{m-1}(\R^n)$. As mentioned in Remark~\ref{rmk:W2:trace}, $\arr f=\Tr_{m-1}^+ F$ for some $F\in \dot W^{m,2}(\R^\dmn_+)$.
We define
\begin{equation}
\label{dfn:D:newton:+}
\D^{\mat A}\arr f = -\1_+ F + \Pi^L(\1_+ \mat A\nabla^m F)
.\end{equation}
This operator is well-defined, that is, does not depend on the choice of~$F$. See \cite[Section~2.4]{BarHM17} or \cite[Lemma~4.2]{Bar17}. Furthermore, it is antisymmetric about exchange of $\R^\dmn_+$ and $\R^\dmn_-$; that is, if $\Tr_{m-1}^-F=\arr f$ and $F\in \dot W^{m,2}(\R^\dmn_-)$, then
\begin{equation}
\label{dfn:D:newton:-}
\D^{\mat A}\arr f =
-\Pi^L(\1_-\mat A\nabla^m F) + \1_- F
.\end{equation}
See \cite[formula~(2.27)]{BarHM17} or \cite[formula~(4.8)]{Bar17}.
We extend $\D^{\mat A}$ by density. Specifically, in \cite{BarHM18p}, it was shown that if $\arr f\in \dot W\!A^{k,p}_{m-1}(\R^n)\cap\dot W\!A^{1/2,2}_{m-1}(\R^n)$ for $k\in \{0,1\}$ and for $p$ sufficiently close to~$2$, then there is an additive normalization of $\D^{\mat A}\arr f$ that satisfies the bound~\eqref{eqn:D:N:2} or the bounds~\cref{eqn:D:N:rough:2,eqn:D:lusin:rough:2}; by density we may extend $\D^{\mat A}$ to an operator defined on all of $\dot W\!A^{k,p}_{m-1}(\R^n)$ and mapping into $\dot W^{m+k-1,2}_{loc}(\R^\dmn_\pm)\cap\dot W^{m,2}_{loc}(\R^\dmn_\pm)$.

\subsection{The single layer potential}
\label{sec:dfn:S}

Let $\arr g\in \dot B^{-1/2,2}_2(\R^n)$. Then $\arr\varphi\to\langle\arr\varphi,\arr g\rangle_{\R^n}$ is a bounded linear operator on the space $\dot W\!A^{1/2,2}_{m-1}(\R^n)$,
and so $F\to \langle \Tr_{m-1} F,\arr g\rangle_{\R^n}$ is a bounded linear operator on $\dot W^{m,2}(\R^\dmn)$. By the Lax-Milgram lemma, there is a unique function $\s^L\arr g\in \dot W^{m,2}(\R^\dmn)$ that satisfies
\begin{align}
\label{dfn:S}
\langle \nabla^m\varphi, \mat A\nabla^m\s^L\arr g\rangle_{\R^\dmn}&=\langle \Tr_{m-1}\varphi,\arr g\rangle_{\R^n}
&&\text{for all }\varphi\in \dot W^{m,2}(\R^\dmn)
.\end{align}
See \cite{Bar17}. We remark that this definition coincides with the definition of $\s^L\arr g$ involving the Newton potential given in \cite{BarHM17,BarHM17pA}. In \cite{BarHM18p} we showed  that $\s^L$ extends by density to an operator defined on all of $L^p(\R^n)$ for $p$ sufficiently close to~$2$ and that satisfies the bound~\eqref{eqn:S:N:2}.

\begin{rmk}
If $L$ is an operator of the form~\eqref{eqn:weak}, then $L$ may generally be associated to many choices of coefficients $\mat A$; for example, if $A_{\alpha\beta}=\widetilde A_{\alpha\beta}+M_{\alpha\beta}$, where $\arr M$ is constant and $M_{\alpha\beta}=-M_{\beta\alpha}$, then the operators $L$ and $\widetilde L$ associated to $\mat A$ and $\mat{\widetilde A}$ are equal. The single layer potential $\s^L$ depends only on the operator~$L$, while the double layer potential $\D^{\mat A}$ depends on the particular choice of coefficients~$\mat A$.
\end{rmk}

We now recall the operator $\s^L_\nabla$ of \cite{BarHM17pA,BarHM18p}. If $\abs\beta=m$ and $\beta_\dmn\geq 1$, and if
$h\in L^2(\R^n)$, then we let
\begin{equation}\label{eqn:S:S:vertical}
\nabla^{m-1}\s^L_\nabla (h\arr e_\beta)(x,t) = -\nabla^{m-1}\partial_t\s^L(h\arr e_\zeta)(x,t)\quad\text{where }\beta=\zeta+\vec e_\dmn.\end{equation}
If $h\in L^2(\R^n)\cap \dot B^{2,2}_{1/2}(\R^n)$, and if $\abs\beta=m$ and $\beta_\dmn<\abs\beta=m$,
then we let
\begin{equation}\label{eqn:S:S:horizontal}
\nabla^{m}\s^L_\nabla (h\arr e_\beta)(x,t) = -\nabla^{m}\s^L(\partial_{x_j} h\arr e_\zeta)(x,t)\quad\text{where }\beta=\zeta+\vec e_j\end{equation}
where $j$ is any number with $1\leq j\leq n$ and with $\vec e_j\leq \beta$.

As shown in \cite[formula~(2.27)]{BarHM17pA} and  \cite[Lemma~4.4]{BarHM18p}, $\s^L_\nabla$ is well defined in the sense that, if $1\leq \beta_\dmn\leq m-1$, then the two formulas~\eqref{eqn:S:S:vertical} and~\eqref{eqn:S:S:horizontal} yield the same result, and if $\beta_\dmn\leq m-1$ then the choice of distinguished direction $x_j$ in formula~\eqref{eqn:S:S:horizontal} does not matter.
By \cite[Theorem~1.13]{BarHM17pA} and \cite[Theorem~1.15]{BarHM18p}, $\s^L_\nabla\arr h$ satisfies the bounds \eqref{eqn:S:lusin:rough:2} and~\eqref{eqn:S:N:rough:2}, and so $\s^L_\nabla$ extends by density to a well defined operator $\s^L_\nabla:L^p(\R^n)\mapsto \dot W^{m-1,2}_{loc}(\R^\dmn_\pm)\cap \dot W^{m,2}_{loc}(\R^\dmn_\pm)$ for all $p$ sufficiently close to~$2$.

\section{Regularity of solutions to elliptic equations}
\label{sec:regular}

It is well known that solutions to the elliptic equation $Lu=0$ display many regularity properties. In this section we will state some known regularity results that will be used throughout the paper, and will then establish the bounds~\eqref{eqn:Meyers:bound}.

We begin with the higher order analogue of the Caccioppoli inequality. This lemma was proven in full generality in \cite{Bar16} and some important preliminary versions were established in \cite{Cam80,AusQ00}.
\begin{lem}[The Caccioppoli inequality]\label{lem:Caccioppoli}
Let $L$ be an operator of the form~\eqref{eqn:weak} of order~$2m$ associated to bounded coefficients~$\mat A$ that satisfy the ellipticity condition~\eqref{eqn:elliptic}.
Let $ u\in \dot W^{m,2}(B(X,2r))$ with $L u=0$ in $B(X,2r)$.

Then we have the bound
\begin{equation*}
\fint_{B(X,r)} \abs{\nabla^j  u(x,s)}^2\,dx\,ds
\leq \frac{C}{r^2}\fint_{B(X,2r)} \abs{\nabla^{j-1}  u(x,s)}^2\,dx\,ds
\end{equation*}
for any $j$ with $1\leq j\leq m$.
\end{lem}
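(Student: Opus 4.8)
The plan is to run the standard energy argument, using the weak formulation \eqref{eqn:weak} with a cutoff-modified test function, and then to iterate to handle all orders $j$ with $1\leq j\leq m$ at once. First I would prove the case $j=m$ directly. Fix a smooth cutoff $\eta\in C^\infty_0(B(X,2r))$ with $\eta\equiv 1$ on $B(X,3r/2)$, $0\leq\eta\leq 1$, and $\abs{\nabla^k\eta}\leq C r^{-k}$ for $0\leq k\leq m$. The natural test function is $\varphi=\eta^{2m} u$ (or, more carefully, $\eta^{2m}(u-P)$ where $P$ is a polynomial of degree $\leq m-1$ chosen later for the iteration), which lies in $\dot W^{m,2}(B(X,2r))$ and is compactly supported, hence is admissible in \eqref{eqn:weak}. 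Plugging this into $\langle\nabla^m\varphi,\mat A\nabla^m u\rangle_{B(X,2r)}=0$, I expand $\nabla^m(\eta^{2m}u)$ by the Leibniz rule: the term where all $m$ derivatives land on $u$ produces $\eta^{2m}\nabla^m u$, and the remaining terms involve at least one derivative of $\eta$ and at most $m-1$ derivatives of $u$. Using the G\r{a}rding inequality \eqref{eqn:elliptic} — applied to $\eta^m u$ after a further round of Leibniz bookkeeping, which is the clean way to get a lower bound of the form $\lambda\doublebar{\nabla^m(\eta^m u)}_{L^2}^2$ — together with boundedness of $\mat A$ and Young's inequality to absorb the cross terms, I obtain
\begin{equation*}
\int \eta^{2m}\abs{\nabla^m u}^2 \leq \frac{C}{r^2}\int_{B(X,2r)}\abs{\nabla^{m-1}u}^2 + (\text{lower-order terms}),
\end{equation*}
and after absorbing and using $\eta\equiv 1$ on $B(X,r)$ this gives the $j=m$ bound. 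The one technical subtlety here is that \eqref{eqn:elliptic} is stated for test functions on all of $\R^\dmn$, not on a ball; this is handled by applying it to the compactly supported function $\eta^m u$ extended by zero, and then re-expanding $\nabla^m(\eta^m u)$ to compare with $\eta^m\nabla^m u$.

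For the general case $1\leq j\leq m$, I would argue by downward induction (or iteration) on $j$. Lower order derivatives of solutions are controlled by interior estimates: since $Lu=0$, on a slightly smaller ball $u$ is smooth in a suitable sense and one has the interpolation/iteration
\begin{equation*}
\fint_{B(X,\rho)}\abs{\nabla^j u}^2 \leq \frac{C}{(\rho')^2}\fint_{B(X,\rho')}\abs{\nabla^{j-1}u}^2
\end{equation*}
on a chain of concentric balls with radii between $r$ and $2r$; the passage from scale $\rho$ to $\rho'$ is exactly the $j=m$ argument applied not to $u$ but after differentiating, or — more robustly, since the coefficients are merely bounded measurable and cannot be differentiated — by the standard device of re-deriving the energy estimate with test function $\eta^{2m}(u-P)$ where $P$ is the degree-$(j-1)$ Taylor-type polynomial of $u$ chosen so that $\nabla^{j-1}(u-P)$ has mean zero, then invoking a Poincar\'e inequality to replace $\doublebar{\nabla^{j-1}(u-P)}_{L^2}$ by $r\doublebar{\nabla^j(u-P)}_{L^2}=r\doublebar{\nabla^j u}_{L^2}$ on the larger ball; iterating this absorption over finitely many nested scales (the number of scales depending only on $m$) yields the stated inequality with the two fixed radii $r$ and $2r$.

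The main obstacle — and the place where care is genuinely needed rather than routine — is the Leibniz expansion together with the correct use of the G\r{a}rding inequality when passing from $\eta^m\nabla^m u$ to $\nabla^m(\eta^m u)$: all the commutator terms must be bounded by $\nabla^{\leq m-1}u$ times negative powers of $r$ and then absorbed either into the left side (the genuinely highest-order piece, via the ellipticity lower bound and Young's inequality with a small parameter) or into the right side (everything strictly lower order). Since $\mat A$ is only bounded and measurable, one cannot integrate by parts to move derivatives off $u$, so the argument must be organized entirely at the level of $\nabla^m$ pairings; this is precisely why one tests with $\eta^{2m}u$ (an even, high power of $\eta$) rather than $\eta^2 u$, so that the symmetrized quantity $\eta^m\nabla^m u$ is available. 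All of this is standard (cf.\ \cite{Cam80,AusQ00,Bar16}), so I would present the $j=m$ computation in a few lines and then indicate the iteration for general $j$ briefly.
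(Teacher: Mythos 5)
First, a remark on the comparison: the paper does not actually prove this lemma. It is quoted from \cite{Bar16} (with preliminary versions in \cite{Cam80,AusQ00}), so your argument has to be measured against the proof there rather than against anything in the present text.

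Your top-order case $j=m$ is essentially the standard and correct argument: test with $\eta^{2m}(u-P)$ where $P$ has degree at most $m-2$ and is chosen so that $\fint_{B(X,2r)}\nabla^k(u-P)=0$ for $k\leq m-2$ (so that iterated Poincar\'e converts every commutator term into $r^{-1}\doublebar{\nabla^{m-1}u}_{L^2(B(X,2r))}$), apply the G\r{a}rding inequality \eqref{eqn:elliptic} to $\eta^m(u-P)$ extended by zero, and absorb. One device you gloss over: the commutator terms pair the un-cut-off $\nabla^m u$ against lower-order derivatives on the set where $\eta<1$, so they cannot be absorbed into $\int\eta^{2m}\abs{\nabla^m u}^2$ by Young's inequality alone; one needs the standard iteration-over-radii lemma (\cite[Chapter~V]{Gia83}) or a family of cutoffs. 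That is routine.

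The genuine gap is the case $1\leq j\leq m-1$. The displayed inequality in your second paragraph is precisely the statement to be proven, and the mechanism you offer for it --- re-deriving the energy estimate with test function $\eta^{2m}(u-P)$, $\deg P=j-1$ --- cannot produce it: testing the equation only ever yields a bound on the top-order quantity $\int\eta^{2m}\abs{\nabla^m u}^2$, never on an intermediate derivative $\int\abs{\nabla^j u}^2$. Moreover, with $\deg P=j-1$ the right-hand side of that energy estimate still contains $r^{-2(m-k)}\int\abs{\nabla^k(u-P)}^2$ for $j\leq k\leq m-1$, and Poincar\'e bounds lower-order norms by higher-order ones, not the reverse, so these terms cannot be converted into $\int\abs{\nabla^{j-1}u}^2$. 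The missing ingredient is the elementary interpolation inequality, valid for arbitrary $W^{j+1,2}$ functions with no equation involved,
\begin{equation*}
\int_{B(X,\rho)}\abs{\nabla^j w}^2\leq \epsilon\rho^2\int_{B(X,\rho)}\abs{\nabla^{j+1}w}^2+\frac{C_\epsilon}{\rho^2}\int_{B(X,\rho)}\abs{\nabla^{j-1}w}^2 .
\end{equation*}
Applying this for $j, j+1,\dots,m-1$ on a chain of balls with radii between $r$ and $2r$, capping the chain with the already-established $j=m$ Caccioppoli bound, and choosing the finitely many $\epsilon$'s small enough to absorb the top-order contributions yields the one-step estimate for every $j$. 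Without some such step the passage from $j=m$ to $j<m$ does not close; note that for $j<m$ the asserted inequality is false for general $W^{j,2}$ functions (e.g.\ $u=N^{1-j}\sin(Nx_1)$ with $N r$ large), so the equation must enter, and it enters only through the top-order case.
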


Next, we mention that if $\mat A$ is $t$-independent then solutions to $Lu=0$ have additional regularity. In particular, the following lemma was proven in the case $m = 1$ in \cite[Proposition 2.1]{AlfAAHK11} and generalized to the case $m \geq 2$ in \cite[Lemma 3.20]{BarHM17pA}.
\begin{lem}\label{lem:slices}
Let $L$ be an operator of the form~\eqref{eqn:weak} of order~$2m$ associated to bounded $t$-independent coefficients~$\mat A$ that satisfy the ellipticity condition~\eqref{eqn:elliptic}.
Let $Q\subset\R^n$ be a cube of side length~$\ell(Q)$ and let $I\subset\R$ be an interval with $\abs{I}=\ell(Q)$.
If $t\in I$ and $Lu=0$ in $2Q\times 2I$, then
\begin{equation*}\int_Q \abs{\nabla^j \partial_t^k u(x,t)}^p\,dx \leq \frac{C(j,p)}{\ell(Q)}
\int_{2Q}\int_{2I} \abs{\nabla^j \partial_s^k u(x,s)}^p\,ds\,dx\end{equation*}
for any $0\leq j\leq m$, any $0< p < p_{m-j,L}^+$, and any integer $k\geq 0$. \end{lem}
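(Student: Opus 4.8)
The plan is to reduce to the case $k=0$ and then prove that case directly. Since $\mat A$ is $t$-independent, $L$ commutes with $\partial_t$; iterating the Caccioppoli inequality (Lemma~\ref{lem:Caccioppoli}) together with $t$-independence shows that $\partial_t^k u\in\dot W^{m,2}_{loc}(2Q\times 2I)$ and $L(\partial_t^k u)=0$ there, so applying the $k=0$ case of the lemma to $v=\partial_t^k u$ (with the same $Q$ and $I$) gives exactly the asserted bound for $\nabla^j\partial_t^k u$; hence it suffices to take $k=0$. In that case $\nabla^j u$ has the locally square integrable $t$-derivative $\nabla^j\partial_t u=\partial_t\nabla^j u$ (the square integrability when $j=m$ using that $\partial_t u$ again solves $Lw=0$), so we may fix the representative of $u$ for which $t\mapsto\nabla^j u(\,\cdot\,,t)$ is continuous from $I$ into $L^2_{loc}(\R^n)$ and write, for $s,t\in I$ and a.e.\ $x$,
\begin{equation*}
\nabla^j u(x,t)=\nabla^j u(x,s)+\int_s^t\nabla^j\partial_\tau u(x,\tau)\,d\tau .
\end{equation*}
Taking absolute values and $p$th powers, averaging over $s\in I$, using H\"older's inequality in $\tau$, integrating in $x\in Q$, and recalling $\abs{I}=\ell(Q)$ gives
\begin{equation*}
\int_Q\abs{\nabla^j u(x,t)}^p\,dx\leq\frac{C}{\ell(Q)}\int_Q\int_I\abs{\nabla^j u}^p+C\,\ell(Q)^{p-1}\int_Q\int_I\abs{\nabla^j\partial_\tau u}^p .
\end{equation*}
The first term is at most $\frac{C}{\ell(Q)}\int_{2Q}\int_{2I}\abs{\nabla^j u}^p$, as required, so the lemma reduces to the Caccioppoli-type estimate in the $t$-direction
\begin{equation}\label{eqn:slices:Cacc:t}
\int_Q\int_I\abs{\nabla^j\partial_t u}^p\leq\frac{C(j,p)}{\ell(Q)^p}\int_{2Q}\int_{2I}\abs{\nabla^j u}^p .
\end{equation}

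To prove~\eqref{eqn:slices:Cacc:t} I would first establish its $L^2$ analogue, $\doublebar{\nabla^j\partial_t u}_{L^2(B(X,r))}\leq\frac{C}{r}\doublebar{\nabla^j u}_{L^2(B(X,2r))}$ whenever $B(X,2r)\subset 2Q\times 2I$: for $j\leq m-1$ this follows from $\abs{\nabla^j\partial_t u}\leq\abs{\nabla^{j+1}u}$ and Lemma~\ref{lem:Caccioppoli} at level $j+1$, while for $j=m$ it follows from Lemma~\ref{lem:Caccioppoli} at level $m$ applied to the solution $\partial_t u$, followed by $\abs{\nabla^{m-1}\partial_t u}\leq\abs{\nabla^m u}$. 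Then I would cover $\overline{Q\times I}$ by a finite, bounded-overlap family of balls $B_i=B(X_i,\ell(Q)/C_0)$ with $X_i\in Q\times I$ and $C_0$ a dimensional constant large enough that $\overline{8B_i}\subset 2Q\times 2I$. Since $p<p_{m-j,L}^+$ and both $u$ and $w=\partial_t u$ are solutions, the reverse H\"older inequality~\eqref{eqn:Meyers} — one of whose two uses below is merely Jensen's inequality, according as $p\geq 2$ or $p\leq 2$ — combines with the $L^2$ estimate to give, on each $B_i$,
\begin{equation*}
\Bigl(\fint_{B_i}\abs{\nabla^j\partial_t u}^p\Bigr)^{1/p}\leq C\Bigl(\fint_{2B_i}\abs{\nabla^j\partial_t u}^2\Bigr)^{1/2}\leq\frac{C}{\ell(Q)}\Bigl(\fint_{4B_i}\abs{\nabla^j u}^2\Bigr)^{1/2}\leq\frac{C(j,p)}{\ell(Q)}\Bigl(\fint_{8B_i}\abs{\nabla^j u}^p\Bigr)^{1/p}.
\end{equation*}
Multiplying by $\abs{B_i}$ and summing over $i$ (bounded overlap, $8B_i\subset 2Q\times 2I$) then yields~\eqref{eqn:slices:Cacc:t}.

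The hard part is~\eqref{eqn:slices:Cacc:t}. The crude bound $\abs{\nabla^j\partial_t u}\leq\abs{\nabla^{j+1}u}$, if inserted directly into the reverse H\"older inequality, would force the restriction $p<p_{m-j-1,L}^+$, which is strictly smaller than the hypothesis $p<p_{m-j,L}^+$ in general; the resolution — and the one genuine use of $t$-independence beyond the reduction in the first step — is that $\partial_t u$ is itself a solution, so $\nabla^j\partial_t u$ enjoys the reverse H\"older self-improvement of~\eqref{eqn:Meyers} at the \emph{same} exponent $p_{m-j,L}^+$ as $\nabla^j u$, with the lost derivative recovered only at the $L^2$ level, where the Caccioppoli inequality contributes a harmless factor $\ell(Q)^{-1}$ and imposes no constraint on $p$.
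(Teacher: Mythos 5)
This lemma is not proved in the present paper; it is quoted from \cite[Lemma~3.20]{BarHM17pA} (and \cite[Proposition~2.1]{AlfAAHK11} for $m=1$), and your argument is essentially the standard proof given there: reduce to $k=0$ using $t$-independence, control the slice integral by the fundamental theorem of calculus in $t$, and absorb the resulting term $\nabla^j\partial_t u$ by applying Caccioppoli and the reverse H\"older inequality~\eqref{eqn:Meyers} to the solution $\partial_t u$ rather than to $u$ itself --- which, as you correctly emphasize, is what preserves the exponent range $p<p_{m-j,L}^+$ instead of degrading it to $p<p_{m-j-1,L}^+$. The reduction to the interior estimate~\eqref{eqn:slices:Cacc:t}, the $L^2$ Caccioppoli step, and the covering/reverse-H\"older argument are all correct.

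The one step that does not survive the full stated range is the H\"older inequality in $\tau$: the bound $\abs{\int_s^t f}^p\leq \abs{I}^{p-1}\int_I\abs{f}^p$ is false for $0<p<1$, and the lemma is asserted for all $0<p<p_{m-j,L}^+$. This is easily repaired with tools you already use: first prove the lemma for $p=2$ (or any $p\geq 1$), then for $0<p<1$ apply H\"older in $x$ on the slice $Q\times\{t\}$ to pass from exponent $p$ up to $2$, invoke the $p=2$ case with a slightly smaller dilation (say $\tfrac32 Q\times\tfrac32 I$ in place of $2Q\times 2I$), and finally lower the exponent of the solid integral from $2$ back to $p$ via~\eqref{eqn:Meyers} and the same covering argument you use for~\eqref{eqn:slices:Cacc:t}. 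With that addendum the proof is complete.
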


In the next two propositions, we will show that if $\mat A$ is $t$-independent, then we can improve on the bound~\eqref{eqn:Meyers:p} on the numbers $p_{j,L}^+$ in the bound~\eqref{eqn:Meyers}. We remark that Lemma~\ref{lem:slices} will be crucial in both cases.

\begin{prp}\label{prp:2d:p0}
Let $L$ be an operator of the form~\eqref{eqn:weak} of order~$2m$ associated to bounded coefficients~$\mat A$ that satisfy the ellipticity condition~\eqref{eqn:elliptic}.

If $\mat A$ is $t$-independent in the sense of formula~\eqref{eqn:t-independent}, and if the ambient dimension $\dmn=2$, then
\begin{equation*}
\esssup_{S} \abs{\nabla^m u} \leq C\biggl(\fint_{2S} \abs{\nabla^m u}^2\biggr)^{1/2}\end{equation*}
whenever $S\subset\R^2$ is a square, $u\in \dot W^{m,2}(2S)$, and $Lu=0$ in~$2S$. In particular,
$p_{0,L}^+=\infty$, where $p_{0,L}^+$ is as in the bound~\eqref{eqn:Meyers}.
\end{prp}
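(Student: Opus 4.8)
The plan is to prove the pointwise bound by estimating each component $\partial^\zeta u$ of $\nabla^m u$ (so $\abs\zeta=m$; since $\dmn=2$ we have $n=1$, and a multiindex is a pair $\zeta=(\zeta_1,\zeta_2)$ with $\zeta_1$ horizontal derivatives and $\zeta_2$ derivatives in~$t$) separately in $L^\infty(S)$ by $R:=\bigl(\fint_{2S}\abs{\nabla^m u}^2\bigr)^{1/2}$. Both sides of the asserted inequality scale the same way under dilation and translation, so I would normalize $\ell(S)=1$, and by a routine covering argument it suffices to bound the essential supremum over a fixed fraction of~$S$. Throughout I would use three standard facts: that a solution of $Lu=0$ with $t$-independent coefficients is smooth in~$t$, so each $\partial_t^k u$ is again a solution and $\nabla^j\partial_t^k u\in L^2_{loc}$ for $0\le j\le m$; that Caccioppoli's inequality (Lemma~\ref{lem:Caccioppoli}), applied iteratively to the solutions $\partial_t^k u$, gives $\doublebar{\partial_t^k\nabla^m u}_{L^2(S')}\le C_k\doublebar{\nabla^m u}_{L^2(2S)}$ for any fixed enlargement $S'$ of~$S$ with $\overline{S'}\subset 2S$; and that, by~\eqref{eqn:Meyers:p} with $\dmn=2$, one has $p_{j,L}^+=\infty$ for every $j\ge 1$, while $p_{0,L}^+\ge 2+\varepsilon$. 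Fix $p_0\in(2,p_{0,L}^+)$.

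\textbf{Components with a $t$-derivative.}
For $\partial^\zeta u$ with $\zeta_2\ge 1$ the bound is direct. Set $w=\partial_t u$, again a solution: then $\partial^\zeta u$ is a component of $\nabla^{m-1}w$ and $\partial_x\partial^\zeta u$ is a component of $\nabla^m w$. Applying Lemma~\ref{lem:slices} to~$w$ (with $j=m-1$ and with $j=m$; the constraints $p_0<p_{1,L}^+=\infty$ and $p_0<p_{0,L}^+$ hold), then the Meyers estimate~\eqref{eqn:Meyers} on~$w$, and then the Caccioppoli bound above, I obtain $\doublebar{\partial^\zeta u(\,\cdot\,,t)}_{L^{p_0}(Q)}+\doublebar{\partial_x\partial^\zeta u(\,\cdot\,,t)}_{L^{p_0}(Q)}\le CR$ for a.e.\ $t$, where $S=Q\times I$ with $Q\subset\R$ an interval. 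Since $Q$ is one-dimensional, the Sobolev embedding $W^{1,p_0}(Q)\hookrightarrow L^\infty(Q)$ now yields $\doublebar{\partial^\zeta u}_{L^\infty(S)}\le CR$.

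\textbf{The component $\partial_x^m u$.}
This is the one derivative of order~$m$ carrying no $t$-derivative, and here the equation must be used; this is the heart of the proof. I would rewrite the weak formulation of $Lu=0$ in $2S$ as the distributional identity $\sum_{j=0}^m\partial_x^j g_j=0$, where $g_j:=\partial_t^{m-j}(\mat A\nabla^m u)_{(j,m-j)}$ (pulling the $t$-derivatives inside is legitimate because $\mat A$ is $t$-independent). For $j<m$ one has $m-j\ge 1$, so after extracting that many factors~$\partial_t$ each term of~$g_j$ is a bounded coefficient times a component of $\nabla^m$ of the solution $\partial_t^{m-j}u$; exactly as in the previous paragraph this gives $\doublebar{g_j(\,\cdot\,,t)}_{L^{p_0}(Q)}\le CR$ for a.e.\ $t$, and $g_m=(\mat A\nabla^m u)_{(m,0)}$ is a bounded combination of components of $\nabla^m u$, so likewise $\doublebar{g_m(\,\cdot\,,t)}_{L^{p_0}(Q)}\le CR$. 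Restricting the identity to almost every horizontal slice, $\phi:=g_m(\,\cdot\,,t)$ satisfies $\phi\in L^{p_0}(Q)$ and $\partial_x^m\phi=-\sum_{j<m}\partial_x^j\bigl(g_j(\,\cdot\,,t)\bigr)$ with each $g_j(\,\cdot\,,t)\in L^{p_0}(Q)$; a one-dimensional integration argument — the $(m-j)$-fold antiderivatives of $g_j(\,\cdot\,,t)$ lie in $W^{1,p_0}(Q)\hookrightarrow L^\infty(Q)$, and $\phi$ minus their sum is a polynomial of degree $<m$, hence controlled on~$Q$ by its $L^{p_0}(Q)$ norm — gives $\doublebar{g_m(\,\cdot\,,t)}_{L^\infty(Q)}\le CR$, and taking the supremum in~$t$, $\doublebar{(\mat A\nabla^m u)_{(m,0)}}_{L^\infty(S)}\le CR$. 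Finally $(\mat A\nabla^m u)_{(m,0)}=A_{(m,0),(m,0)}\,\partial_x^m u+\sum_{\beta_2\ge 1}A_{(m,0),\beta}\,\partial^\beta u$; testing~\eqref{eqn:elliptic} against $\varphi(x,t)=e^{iNx}\eta(x,t)$ and letting $N\to\infty$ shows $\re A_{(m,0),(m,0)}\ge\lambda$, so $A_{(m,0),(m,0)}^{-1}$ is bounded, while the terms with $\beta_2\ge 1$ were bounded in the previous paragraph; hence $\doublebar{\partial_x^m u}_{L^\infty(S)}\le CR$. Combining the two cases gives $\esssup_S\abs{\nabla^m u}\le CR$, and this bound trivially yields the reverse H\"older inequality~\eqref{eqn:Meyers} with $j=0$ for every finite exponent, so $p_{0,L}^+=\infty$.

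\textbf{Main difficulty.}
The obstacle is precisely the component $\partial_x^m u$: every other derivative of order~$m$ carries a~$\partial_t$, which is exactly what lets Lemma~\ref{lem:slices} — the decisive use of $t$-independence — convert it to a full gradient of a solution, on which the Meyers and Caccioppoli estimates act; $\partial_x^m u$ has no such derivative and can only be controlled by feeding the differential equation back into the estimate, which is the role of the identity $\sum_j\partial_x^j g_j=0$ together with the one-dimensional integration. Two minor technical points need care: justifying the restriction of that distributional identity to almost every horizontal slice (by testing against products $\psi(x)\chi(t)$), and arranging that every intermediate enlargement of~$S$ stays inside~$2S$ — there are only finitely many such steps, each losing only a constant, so this is routine.
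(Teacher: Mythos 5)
Your proof is correct and follows essentially the same route as the paper's: the components of $\nabla^m u$ carrying a $t$-derivative are controlled via interior regularity of the solution $\partial_t u$, the remaining component $\partial_x^m u$ is controlled by restricting the equation to horizontal slices, integrating $m-j$ times in $x$ (your antiderivatives of the $g_j$ are the paper's $f_j$), absorbing the resulting degree-$(m-1)$ polynomial via equivalence of norms, and dividing by $A_{mm}$ after showing $\re A_{mm}\ge\lambda$. The only cosmetic differences are that the paper gets the first part from Morrey's inequality in two dimensions rather than the one-dimensional embedding $W^{1,p_0}(Q)\hookrightarrow L^\infty(Q)$, and proves $\re A_{mm}\ge\lambda$ with the test functions $\rho(x)\eta(t/R)$, $R\to\infty$, rather than $e^{iNx}\eta$, $N\to\infty$; both variants are valid.
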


In the second order case $2m=2$, Proposition~\ref{prp:2d:p0} is \cite[Th\'eor\`eme~II.2]{AusT95}.
In order to prove this proposition for $2m\geq 4$, we will need the following lemma.

\begin{lem}
\label{lem:A0:2d}
Let $\mat A$ be bounded, $t$-independent, and satisfy the ellipticity condition~\eqref{eqn:elliptic} in dimension $\dmn=2$. Let $\alpha_j=j\vec e_1+(m-j)\vec e_2$, and let $A_{jk}=A_{\alpha_j\alpha_k}$.
Then $\re A_{mm}(x)\geq \lambda$ for almost every $x\in\R$.

\end{lem}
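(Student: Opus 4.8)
The plan is to test the ellipticity inequality~\eqref{eqn:elliptic} against a carefully chosen sequence of functions $\varphi$ that concentrates the $m$th gradient in the pure $\partial_{x_1}$ direction. In dimension $\dmn=2$, write $x=(x_1,x_2)$ and use the notation of the lemma: $\alpha_j=j\vec e_1+(m-j)\vec e_2$ runs over all multiindices of length~$m$ as $j$ runs from $0$ to~$m$, and $\partial^{\alpha_j}=\partial_{x_1}^j\partial_{x_2}^{m-j}$. The coefficient $A_{mm}=A_{\alpha_m\alpha_m}$ is precisely the one that multiplies $\overline{\partial_{x_1}^m\varphi}\,\partial_{x_1}^m\varphi$. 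So the natural idea is: if we could build $\varphi$ with $\partial_{x_1}^m\varphi$ large and all other $m$th-order derivatives small, then~\eqref{eqn:elliptic} would force $\re A_{mm}\geq\lambda$ at the point where $\varphi$ is concentrated.

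First I would take a fixed point $x_0=(x_1^0,x_2^0)$ that is a Lebesgue point of all the coefficients $A_{\alpha\beta}$, which is almost every point. Next I would choose the test functions to be of tensor-product, highly oscillatory form: $\varphi_N(x_1,x_2)=\eta(x_1,x_2)\,e^{iNx_1}$ times appropriate normalizing factors, or more cleanly $\varphi_N(x_1,x_2)=N^{-m}\eta((x-x_0)/\delta)\,e^{iN(x_1-x_1^0)}$ with $\delta\to 0$ and $N\to\infty$ coupled so that $N\delta\to\infty$. For such $\varphi_N$, as $N\to\infty$ the top-order derivative $\partial_{x_1}^m\varphi_N$ is of size $O(1)\cdot e^{iN(x_1-x_1^0)}\eta(\cdots)$ up to lower-order terms, while every derivative $\partial^{\alpha_j}\varphi_N$ with $j<m$ carries a factor $N^{j-m}\to 0$ (the $\partial_{x_2}$ derivatives landing only on $\eta$, hence bounded, and the missing powers of $N$ not being supplied). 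Thus $\|\nabla^m\varphi_N\|_{L^2}^2 = \|\partial_{x_1}^m\varphi_N\|_{L^2}^2 + o(1)$ and the bilinear form $\langle\nabla^m\varphi_N,\mat A\nabla^m\varphi_N\rangle$ reduces, in the limit, to $\int A_{mm}(x)\,|\partial_{x_1}^m\varphi_N|^2$ up to an error that vanishes. Then, localizing with $\delta\to 0$ so that $A_{mm}$ may be replaced by its value $A_{mm}(x_0)$ at the Lebesgue point (the oscillatory factor $e^{iNx_1}$ has modulus one, so $|\partial_{x_1}^m\varphi_N|^2$ is essentially $|\eta((x-x_0)/\delta)|^2$ and averaging against it recovers the Lebesgue value), inequality~\eqref{eqn:elliptic} becomes in the limit $\re A_{mm}(x_0)\,\|\eta\|_{L^2}^2 \geq \lambda\,\|\eta\|_{L^2}^2$, i.e. $\re A_{mm}(x_0)\geq\lambda$. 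Since $x_0$ was an arbitrary common Lebesgue point, this gives the conclusion for a.e. $x\in\R$.

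The main obstacle, and the step requiring genuine care rather than routine estimation, is controlling the error terms in a single limiting procedure: one must track the cross terms $\langle\partial^{\alpha_j}\varphi_N,A_{\alpha_j\alpha_k}\partial^{\alpha_k}\varphi_N\rangle$ with $(j,k)\neq(m,m)$, each of which is bounded by $\|\mat A\|_{L^\infty}\|\partial^{\alpha_j}\varphi_N\|_{L^2}\|\partial^{\alpha_k}\varphi_N\|_{L^2}$, and verify that the coupling of the two parameters $N\to\infty$ (to kill all terms with $j<m$ or $k<m$) and $\delta\to0$ (to pass from $A_{mm}$ averaged to $A_{mm}(x_0)$) can be done consistently—for instance by first sending $N\to\infty$ with $\delta$ fixed, obtaining $\re\fint_{B(x_0,\delta)}A_{mm}\geq\lambda - C/\delta$-type control with the genuinely correct error being $o_N(1)$, and only then sending $\delta\to0$. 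One must also make sure $\varphi_N\in C_0^\infty(\R^{\dmn})$ as required (it is, being a product of a bump and a smooth exponential), and that the normalization keeps $\|\nabla^m\varphi_N\|_{L^2}$ bounded below, so that dividing through by it is legitimate. Alternatively, and perhaps more transparently, one can skip the oscillation entirely and instead use Lemma~\ref{lem:slices} plus a rescaling/blow-up argument in the spirit of the surrounding material; but the Fourier-side computation above is the most direct route and I would write it that way.
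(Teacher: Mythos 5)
Your proof is correct, but it takes a genuinely different route from the paper's. The paper also tests the G\r{a}rding inequality~\eqref{eqn:elliptic} against a degenerating family of test functions designed so that only the $\overline{\partial_{x_1}^m\varphi}\,A_{mm}\,\partial_{x_1}^m\varphi$ term survives in the limit, but its family is $\varphi_R(x,t)=\rho(x)\,\eta(t/R)$ with $\rho$, $\eta$ real: instead of oscillating in the coefficient variable $x$, it dilates in the translation-invariant variable $t$, so every term involving a $\partial_t$ derivative acquires a negative power of $R$ and disappears as $R\to\infty$. Because the coefficients are $t$-independent, each double integral factors as an $x$-integral times a $t$-integral, the $t$-integrals are handled explicitly by the substitution $t\mapsto Rt$, and the limit is the global inequality $\lambda\int_\R(\rho^{(m)})^2\le\re\int_\R A_{mm}\,(\rho^{(m)})^2$ for every real $\rho\in C^\infty_0(\R)$, from which the a.e.\ pointwise bound follows by density of the admissible $\rho^{(m)}$ in $L^2(\R)$. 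Your argument concentrates instead in frequency (the factor $e^{iNx_1}$) and in space (the scale $\delta$), makes no use of $t$-independence, and extracts the pointwise bound at Lebesgue points; it is in effect the classical derivation of the pointwise Legendre--Hadamard condition from a G\r{a}rding inequality, specialized to the direction $\vec e_1$. The paper's choice buys a one-parameter limit with no localization error, precisely avoiding the two-parameter ($N$ versus $\delta$) bookkeeping that you correctly single out as the delicate step and resolve by sending $N\to\infty$ first; your choice buys greater generality and a directly local conclusion. Both arguments are complete.
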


\begin{proof}
Let $\varphi_R(x,t)=\rho(x)\, \eta(t/R)$, where $\rho$ and $\eta$ are smooth, real-valued, compactly supported, and not everywhere zero.
Then the bound \eqref{eqn:elliptic} implies that \multlinegap=0pt
\begin{multline*}
\lambda\sum_{j=0}^m
\frac{1}{R^{2m-2j}}\int_\R\abs{\rho^{(j)}(x)}^2\,dx
\int_{\R} \abs{\eta^{(m-j)}(t/R)}^2\,dt
\\\leq
\re
\sum_{j=0}^m \sum_{k=0}^m
\frac{1}{R^{2m-j-k}}
\int_{\R}
\rho^{(j)}(x) \,A_{jk}(x)\,\rho^{(k)}(x)\,dx
\int_\R \eta^{(m-j)}(t/R)\,\eta^{(m-k)}(t/R)\,dt.\end{multline*}
Making the change of variables $t\mapsto Rt$, dividing both sides by~$R$, and taking the limit as $R\to\infty$,
we see that
\begin{equation*}
\lambda
\int_\R\abs{\rho^{(m)}(x)}^2\,dx
\int_{\R} \abs{\eta(t)}^2\,dt
\leq
\re
\int_{\R}
A_{mm}(x)\,(\rho^{(m)}(x))^2\,dx
\int_\R (\eta(t))^2\,dt.\end{equation*}
Canceling the integral of $\eta$, we observe that
\begin{equation*}
\lambda
\int_\R(\rho^{(m)}(x))^2\,dx
\leq
\re
\int_{\R}
A_{mm}(x)\,(\rho^{(m)}(x))^2\,dx
\end{equation*}
for all real-valued, smooth, compactly supported functions~$\rho$. This implies that $\re A_{mm}\geq \lambda$ almost everywhere, as desired.
\end{proof}

\begin{proof}[Proof of Proposition~\ref{prp:2d:p0}]
As observed in \cite[Theorem~24]{Bar16}, by the bound \eqref{eqn:Meyers} with $p>2$ and by Morrey's inequality, if $\dmn=2$ and $Lv=0$ in $2S$, then $\nabla^{m-1}v$ is continuous in $\overline S$ and satisfies
\begin{equation*}\max_{(x,t)\in \overline S}
\abs{\nabla^{m-1} v(x,t)}\leq C\left(\fint_{2S} \abs{\nabla^{m-1} v}^2\right)^{1/2}.\end{equation*}
If $L$ is $t$-independent, then $\partial_t^k u\in \dot W^{m,2}_{loc}(2S)$ and $L(\partial_t^k u)=0$ in $2S$ for any nonnegative integer~$k$. Thus, if $Lu=0$ in $2S$ then $\nabla^{m-1}\partial_t u$ is continuous in $\overline S$, and furthermore
\begin{equation}\label{eqn:2d}
\max_{(x,t)\in \overline S}
\abs{\nabla^{m-1}\partial_t u(x,t)}\leq C
\left(\fint_{2S} \abs{\nabla^{m} u}^2\right)^{1/2}.\end{equation}
We need only bound $\partial_x^m u(x,t)$.

By the definition~\eqref{eqn:weak} of $Lu$, if $A_{jk}$ is as in Lemma~\ref{lem:A0:2d}, then
\begin{align*}
0&=
	\int_{2S} \sum_{j=0}^m \sum_{k=0}^m \partial_x^j \partial_t^{m-j}\varphi(x,t)A_{jk}(x) \partial_x^k \partial_t^{m-k} u(x,t)\,dx\,dt
\end{align*}
for all smooth test functions~$\varphi$ that are compactly supported in~$2S$. Because $\partial_t^{2m-j-k} u\in \dot W^{m,2}_{loc}(2S)$, we may integrate by parts in $t$ and see that
\begin{align*}
0&=
	\int_{2S} \sum_{j=0}^m (-1)^{m-j}\sum_{k=0}^m \partial_x^j \varphi(x,t)A_{jk}(x) \partial_x^k \partial_t^{2m-j-k} u(x,t)\,dx\,dt.
\end{align*}

Let $(x_0,t_0)$ be the midpoint of~$S$. Let
\begin{equation*}f_m(x,t) = \sum_{k=0}^{m-1}A_{mk}(x) \partial_x^k \partial_t^{m-k} u(x,t)\end{equation*}
and if $0\leq j\leq m-1$, let $f_j(x)$ satisfy
\begin{equation*}\partial_x^i f_j(x,t)\big\vert_{x=x_0}=0\text{ for all } i<m-j,\quad
\partial_x^{m-j} f_j(x,t)=\sum_{k=0}^m A_{jk}(x) \partial_x^k \partial_t^{2m-j-k} u(x,t).\end{equation*}
Thus,
\begin{align*}0&=\int_{2S}
\sum_{j=0}^{m-1} (-1)^{m-j}\partial_x^j \varphi(x,t)
\partial_x^{m-j} f_j(x,t)
\\&\qquad+
\partial_x^m \varphi(x,t) f_m(x,t)
+
\partial_x^m \varphi(x,t)A_{mm}(x) \partial_x^m u(x,t)
\,dx\,dt
.\end{align*}
Integrating by parts in~$x$, we have that
\begin{align*}0&=\int_{2S}
\partial_x^m \varphi(x,t)\sum_{j=0}^{m}
f_j(x,t)
+
\partial_x^m \varphi(x,t)A_{mm}(x) \partial_x^m u(x,t)
\,dx\,dt
.\end{align*}
Let $Q$ and $I$ be intervals such that $S=Q\times I$.
By the bound~\eqref{eqn:2d} (if $j=m$) or Lemma~\ref{lem:slices} and the Caccioppoli inequality (if $0\leq j\leq m-1$), we have that if $t\in I$ then
\begin{equation*}\esssup_{x\in Q} \abs{f_j(x,t)} \leq C\biggl(\fint_{2S} \abs{\nabla^{m} u}^2\biggr)^{1/2}.\end{equation*}
Furthermore, $t\to f_j(\,\cdot\,,t)$ and $t\to \partial_1^m u(\,\cdot\,,t)$ are both continuous $2I\to L^2(2Q)$. Choosing $\varphi(x,t)=\rho(x)\frac{1}{R}\eta((t-\tau)/R)$ for smooth compactly supported functions $\rho$ and $\eta$ and letting $R\to 0^+$, we have that
\begin{align*}0&=\int_{2Q}
\rho^{(m)}(x)\sum_{j=0}^{m}
f_j(x,\tau)
+
\rho^{(m)}(x)A_{mm}(x) \partial_x^m u(x,\tau)
\,dx\,d\tau
\end{align*}
for any $\tau\in I$. Thus, for any such $\tau$,
\begin{equation*}A_{mm}(x) \partial_x^m u(x,\tau) = \sum_{j=0}^{m}
f_j(x,\tau) + P_\tau(x)\end{equation*}
for almost every $x\in Q$,
where $P_\tau$ is a polynomial of degree at most $m-1$. Because $P_\tau$ is a polynomial in~$x$, we have that
\begin{align*}\sup_{x\in Q} \abs{P_\tau(x)}
&\leq C\fint_Q \abs{P_\tau(x)}\,dx
\leq C\fint_Q \abs{A_{mm}(x) \partial_x^m u(x,t)}\,dx
+C\sum_{j=0}^{m} \fint_Q \abs{
f_j(x,t)}\,dx
\end{align*}
which by Lemma~\ref{lem:slices} is at most $C\fint_{2S} \abs{\nabla^{m} u}$. Applying this bound on~$P_\tau$ and the above bound on $f_j(x,\tau)$ completes the proof.
\end{proof}

The following proposition completes the proof of the bounds~\eqref{eqn:Meyers:bound}. A similar argument was used in \cite[Appendix~B]{AlfAAHK11} to show that, in dimension $\dmn=3$, the De Giorgi-Nash-Moser condition is valid for all operators with $t$-independent coefficients.

\begin{prp}\label{prp:Meyers:t-independent}
Let $L$ be an operator of the form~\eqref{eqn:weak} of order~$2m$ associated to bounded coefficients~$\mat A$ that satisfy the ellipticity condition~\eqref{eqn:elliptic}.

If $\mat A$ is $t$-independent in the sense of formula~\eqref{eqn:t-independent}, then the extended real numbers $p_{j,L}^+$ in the bound~\eqref{eqn:Meyers} satisfy
\begin{equation*}\frac{1}{p_{j,L}^+}\leq \max\biggl(0,\frac{1}{p_{j-1,L}^+}-\frac{1}{n}\biggr) \leq \max\biggl(0,\frac{1}{p_{0,L}^+}-\frac{j}{n}\biggr) \end{equation*}
for all integers $j$ with $1\leq j\leq m$. Furthermore, the numbers $c(j,L,p,q)$ in the bound~\eqref{eqn:Meyers} may be bounded above by a constant that depends only on $p$, $q$, the standard parameters, and on $c(j-1,L,r,2)$, where $1/p+1/n=1/r$.
\end{prp}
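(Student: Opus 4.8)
The plan is to improve the Gagliardo–Nirenberg–Sobolev bound~\eqref{eqn:Meyers:p} by exploiting $t$-independence, specifically the extra regularity in the $t$ direction recorded in Lemma~\ref{lem:slices}. The key observation is that if $\mat A$ is $t$-independent and $Lu=0$, then $\partial_t u$ is again a solution of $Lu=0$, so $\nabla^{m-j}\partial_t u$ enjoys the reverse Hölder inequality~\eqref{eqn:Meyers} at the level of $j$. We want to gain a full unit of Sobolev exponent when passing from $\nabla^{m-j+1}u$ to $\nabla^{m-j}u$, i.e. from $p_{j-1,L}^+$ to $p_{j,L}^+$. The standard GNS inequality only lets us control $\nabla^{m-j}u$ in terms of $\nabla^{m-j+1}u$ and $\nabla^{m-j}u$ itself (it is not self-improving), so the issue is to feed in the correct a priori bound; $t$-independence is exactly what provides it.

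\emph{Main steps.} First, fix $j$ with $1\le j\le m$, fix $q$ with $1/q>1/p_{j-1,L}^+$ (so that~\eqref{eqn:Meyers} holds at level $j-1$), and let $r$ be defined by $1/p+1/n=1/r$; we aim to prove the reverse Hölder estimate~\eqref{eqn:Meyers} at level $j$ for the exponent $p$, with the constant controlled by $c(j-1,L,r,2)$ and the standard parameters. Work on a ball $B=B(X_0,r_0)$ with $Lu=0$ in $2B$; by translation and scaling we may take $r_0=1$. Second, decompose $\nabla^{m-j}u$ into its horizontal derivatives $\nabla_\pureH^{m-j}u$ (for which we can use a Sobolev inequality in the $n$ horizontal variables, picking up one derivative and converting $\nabla^{m-j+1}u$ control into $\nabla^{m-j}u$ control with a gain of $1/n$ in the exponent) and derivatives involving at least one $\partial_t$. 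Third, for the terms involving $\partial_t$, write $\nabla^{m-j}u=\nabla^{m-j-1}_\pureH\,\partial_t u$-type expressions and apply the reverse Hölder estimate~\eqref{eqn:Meyers} at level $j-1$ to the solution $\partial_t u$ (this is where Lemma~\ref{lem:slices}, or equivalently the fact that $\partial_t u$ solves $Lu=0$, enters) to pass from exponent $r$ down to exponent $2$, then iterate/combine with the horizontal Sobolev step. Fourth, to handle the purely horizontal top-order term $\nabla_\pureH^{m-j}u$ at the base of the induction, slice in $t$: for a.e.\ fixed $t$, apply the $n$-dimensional Gagliardo–Nirenberg–Sobolev inequality on a horizontal slice, then integrate in $t$ and use Lemma~\ref{lem:slices} to pass from slice-wise $L^2$ control to the solid $L^2$ bound; this is the analogue of the computation in the proof of Proposition~\ref{prp:2d:p0}, and also mirrors \cite[Appendix~B]{AlfAAHK11}. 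Finally, combine the pieces and track the constants to get the stated dependence on $c(j-1,L,r,2)$, then iterate the one-step gain $1/p_{j,L}^+\le\max(0,1/p_{j-1,L}^+-1/n)$ to obtain $1/p_{j,L}^+\le\max(0,1/p_{0,L}^+-j/n)$.

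\emph{Main obstacle.} The delicate point is the mismatch between the solid (solid-ball) reverse Hölder estimates in~\eqref{eqn:Meyers} and the slice-wise estimates one naturally produces when applying a Sobolev inequality in the $n$ horizontal variables: Sobolev embedding in $\R^n$ improves the exponent on each slice $\R^n\times\{t\}$, but one must then reassemble these slice-wise $L^p_x$ bounds into an $L^p_{x,t}$ bound on a solid region, and conversely one must be able to bound the slice-wise $L^2_x$ norm of $\nabla^{m-j+1}u$ by the solid $L^2$ norm. Lemma~\ref{lem:slices} is precisely the tool that makes both directions work — it is stated for $t$-independent operators and allows passing between $\int_Q|\nabla^j\partial_t^k u(\cdot,t)|^p$ and $\int_{2Q}\int_{2I}|\nabla^j\partial_s^k u|^p$ for $0<p<p_{m-j,L}^+$ — so the real work is bookkeeping: choosing the intermediate exponents so that every invocation of Lemma~\ref{lem:slices} stays strictly below the relevant $p_{\cdot,L}^+$, and verifying that the constants only ever depend on $c(j-1,L,r,2)$ with $1/r=1/p+1/n$ as claimed, rather than on some exponent that would create a circular dependence. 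A secondary technical nuisance is the additive normalization: $\nabla^{m-j}u$ is only determined up to lower-order polynomials, so the Sobolev and Poincaré-type steps must be applied to $u$ minus an appropriate polynomial, and the resulting polynomial corrections absorbed using Lemma~\ref{lem:slices} exactly as in the final paragraph of the proof of Proposition~\ref{prp:2d:p0}.
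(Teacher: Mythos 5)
Your proposal is essentially the paper's proof: the whole argument is your ``fourth'' step (slice-wise Gagliardo--Nirenberg--Sobolev in the $n$ horizontal variables applied to $\nabla^{m-j}u(\cdot,t)$ minus its mean, gaining $1/n$ in the exponent, followed by Lemma~\ref{lem:slices} to move between slice-wise and solid norms --- which is where the hypothesis $r<p_{j-1,L}^+$ and hence the constant $c(j-1,L,r,2)$ enter --- then Caccioppoli and an average in $t$), together with exactly the polynomial-normalization and constant-tracking issues you flag. The only quibble is that your steps two and three are superfluous and, as written, off by two orders: the horizontal gradient of \emph{every} component of $\nabla^{m-j}u$ (including those containing $\partial_t$'s) already lies in $\nabla^{m-j+1}u$, so no horizontal/vertical decomposition is needed, and ``the reverse H\"older at level $j-1$ for the solution $\partial_t u$'' would control $\nabla^{m-j+1}\partial_t u$ rather than the components $\nabla^{m-j-1}_\pureH\partial_t u$ of $\nabla^{m-j}u$ --- but dropping that detour leaves precisely the paper's argument.
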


An induction argument shows that if $1/p>1/(2+\varepsilon)-j/n$, then $c(j,L,p,q)$ may be bounded by a number depending only on $p$, $q$ and the standard parameters.

\begin{proof}[Proof of Proposition~\ref{prp:Meyers:t-independent}]
Let $Q\subset\R^n$ be a cube and let $I$ be an interval with $\abs{I}=\ell(Q)=\ell$, the  side length of~$Q$. Let $u$ be a solution to $Lu=0$ in $16Q\times 16I$.
Choose some $p$ with
\begin{equation*}\frac{1}{p_{j-1,L}^+}-\frac{1}{n}<\frac{1}{p} \leq\frac{1}{2}-\frac{j}{n}.\end{equation*}
Recall that $1/r=1/p+1/n$, so $r<\min(n,p_{j-1,L}^+)$. We observe that $1/r<1/2+1/n-j/\pdmn\leq 1/2+1/n-1/\pdmn$, and so if $n\geq 1$ then $r>1$.

By the Gagliardo-Nirenberg-Sobolev inequality in $Q\subset\R^n$,
\begin{equation*}
\biggl(\fint_{Q} \abs{\nabla^{m-j} u(x,t)-\arr P_t}^p\,dx\biggr)^{1/p}
\leq C_r\ell \biggl(\fint_{Q} \abs{\nabla^{m-j+1} u(x,t)}^r\,dx\biggr)^{1/r}
\end{equation*}
where $\arr P_t$ is a constant that satisfies $\int_Q \nabla^{m-j}u(x,t)-\arr P_t\,dx=0$.

Observe that
\begin{equation*}\abs{\arr P_t}=\abs[bigg]{\fint_Q \nabla^{m-j}u(x,t)\,dx}.\end{equation*}
By Lemma~\ref{lem:slices} and H\"older's inequality, if $t\in I$ then
\begin{equation*}\abs{\arr P_t}
\leq C\fint_{2I}\fint_{2Q}
\abs{ \nabla^{m-j}u(x,s)}\,dx\,ds
\leq C\left(\fint_{2I}\fint_{2Q}
\abs{ \nabla^{m-j}u(x,s)}^2\,dx\,ds\right)^{1/2}
\end{equation*}
Thus,
\begin{align*}
\biggl(\fint_{Q} \abs{\nabla^{m-j} u(x,t)}^p\,dx\biggr)^{1/p}
&\leq
	C_r\ell \biggl(\fint_{Q} \abs{\nabla^{m-j+1} u(x,t)}^r\,dx\biggr)^{1/r}
	\\&\nonumber\qquad+
	C_r\biggl(\fint_{2I}\fint_{2Q}
	\abs{ \nabla^{m-j}u(x,s)}^2\,dx\,ds\biggr)^{1/2}
.\end{align*}
Recall that $r<p_{j-1,L}^+$. By Lemma~\ref{lem:slices}, if $t\in I$ then
\begin{equation*} \ell\biggl(\fint_{Q} \abs{\nabla^{m-j+1} u(x,t)}^r\,dx\biggr)^{1/r}
\leq
C(j-1,r)  \ell\biggl(\fint_{2I}\fint_{2Q} \abs{\nabla^{m-j+1} u(x,s)}^r\,dx\,ds\biggr)^{1/r}.\end{equation*}
By H\"older's inequality (if $r<2$) or the bound~\eqref{eqn:Meyers} (if $r>2$), we have that
\begin{equation*}  \ell\biggl(\fint_{Q} \abs{\nabla^{m-j+1} u(x,t)}^r\,dx\biggr)^{1/r}
\leq
C(j-1,r)  \ell\biggl(\fint_{4I}\fint_{4Q} \abs{\nabla^{m-j+1} u(x,s)}^2\,dx\,ds\biggr)^{1/2}.\end{equation*}
By the Caccioppoli inequality,
\begin{equation*}\ell \biggl(\fint_{Q} \abs{\nabla^{m-j+1} u(x,t)}^r\,dx\biggr)^{1/r}
\leq
C(j-1,r)\biggl(\fint_{8I}\fint_{8Q} \abs{\nabla^{m-j} u(x,s)}^2\,dx\,ds\biggr)^{1/2}.\end{equation*}
Thus
\begin{align*}
\biggl(\fint_{Q} \abs{\nabla^{m-j} u(x,t)}^p\,dx\biggr)^{1/p}
&\leq
	C(j-1,r)\biggl(\fint_{8I}\fint_{8Q} \abs{\nabla^{m-j} u(x,s)}^2\,dx\,ds\biggr)^{1/2}
.\end{align*}
Taking an average in~$t$, we have that if $1/p_{j-1,L}^+-1/n<1/p<1/2-j/n$, then
\begin{align*}
\biggl(\fint_I\fint_{Q} \abs{\nabla^{m-j} u(x,t)}^p\,dx\,dt\biggr)^{1/p}
&\leq
	C(j-1,r)\biggl(\fint_{8I}\fint_{8Q} \abs{\nabla^{m-j} u(x,s)}^2\,dx\,ds\biggr)^{1/2}
.\end{align*}
By the bound~\eqref{eqn:Meyers} and the following remarks, if $1/2>1/p\geq 1/2-j/n$ this is still true with the constant $C(j-1,r)$ depending only on the standard parameters and on~$p$ (or, equivalently, on~$r$). By H\"older's inequality this is true for $0<p\leq 2$.

By the bound~\eqref{eqn:Meyers} and a covering argument (if $q<2$) or H\"older's inequality (if $q>2$), if $0<q<\infty$ then
\begin{equation*}\biggl(\fint_{8I}\fint_{8Q} \abs{\nabla^{m-j} u(x,s)}^2\,dx\,ds\biggr)^{1/2}
\leq \widetilde C(q) \biggl(\fint_{16I}\fint_{16Q} \abs{\nabla^{m-j} u(x,s)}^q\,dx\,ds\biggr)^{1/q}\end{equation*}
where $\widetilde C(q)$ depends on $q$, $\dmn$ and $c(j,L,2,q)$, and so may be taken depending only on $q$ and the standard parameters. Thus, if $1/p_{j-1,L}^+-1/n<1/p<1/q<\infty$, then
\begin{align*}
\biggl(\fint_I\fint_{Q} \abs{\nabla^{m-j} u}^p\biggr)^{1/p}
&\leq
	C(j-1,r)\,\widetilde C(q)\biggl(\fint_{8I}\fint_{8Q} \abs{\nabla^{m-j} u}^q \biggr)^{1/q}
.\end{align*}
A covering argument shows that the bound~\eqref{eqn:Meyers} is valid with $c(j,L,p,q)=C(j-1,r)\allowbreak \,\widetilde C(q)$, as desired.
\end{proof}

\section{Nontangential estimates on layer potentials}
\label{sec:N:potentials}

In this section we will prove the nontangential bounds~\cref{eqn:D:N:intro,eqn:S:N:intro,eqn:D:N:rough:intro,eqn:S:N:rough:intro} on layer potentials.
This will require extensive preliminaries.

As described in Section~\ref{sec:intro:extrapolation}, in \cite{She06A,She06B}, Shen used good-$\lambda$ inequalities to bound solutions to the Dirichlet problem for constant coefficient systems of second or higher order. The following technical lemma is similar to those used in Shen's work and will be used in the proofs of Lemmas~\ref{lem:N:+} and~\ref{lem:lusin:+} below.
In this lemma we will make use of the following capped maximal-type function. The use of dyadic maximal functions lets us avoid covering arguments.

Let $Q\subset\R^n$ be a cube. If $j\geq 0$ is an integer, let $\mathcal{G}_j(Q)$ be the set of $2^{jn}$ pairwise-disjoint open subcubes of side length $2^{-j}\ell(Q)$ such that $\overline Q=\cup_{R\in \mathcal{G}_j(Q)} \overline{R}$. Let $\mathcal{G}(Q)=\cup_{j=0}^\infty \mathcal{G}_j(Q)$.
If $b>1$ and $F\in L^1(bQ)$, then we define
\begin{equation*}\mathcal{M}_{b,Q}F(x) = \sup
\biggl\{\fint_{bR} \abs{F}:R\in\mathcal{G}(Q),\>R\owns x\biggr\}.\end{equation*}

\begin{lem}\label{lem:lambda}
Let $p_2>2$, $A_0\geq1$, and $C_0>0$ be constants.
Let $Q_0\subset\R^n$ be a cube, let $F\in L^2(8Q_0)$, and let $\Phi\in L^p(16Q_0)$ for some $p$ with $2<p<p_2$.

Suppose that whenever $0<\gamma\leq1$, $A\geq A_0$, and $\lambda > 0$, and whenever $Q\in \mathcal{G}(Q_0)$ is a subcube that satisfies
\begin{equation*}
\fint_{16Q} \abs{\Phi}^2 \leq \gamma\lambda, \quad 8^n\abs{Q}\lambda<\int_{8Q} \abs{F}^2,
\quad \int_{15Q} \abs{F}^2 \leq 16^n\abs{Q}\lambda,\end{equation*}
we have that
\begin{equation*}\abs{\{x\in Q:\mathcal{M}_{8,Q} (\abs{F}^2)(x)>A\lambda\}}
\leq C_0\biggl(\frac{\gamma}{A}+\frac{1}{A^{p_2/2}}\biggr) \abs{Q}
.\end{equation*}

Then there is a number $C_p$ depending only on $A_0$, $C_0$, $p$, $p_2$, and the dimension~$n$, such that
\begin{align*}
\int_{Q_0}\abs{F(x)}^p\,dx
&\leq
\frac{C_p}{\abs{Q_0}^{p/2-1}}\doublebar{F}_{L^2(8Q_0)}^p
+
C_p
\int_{16Q_0}\abs{\Phi}^{p}
.\end{align*}
\end{lem}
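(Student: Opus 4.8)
The plan is to run the classical good-$\lambda$ self-improvement scheme in its dyadic form. First, by the Lebesgue differentiation theorem applied along the cubes $R\in\mathcal{G}(Q_0)$ shrinking to a point, one has $\abs{F(x)}^2\leq\mathcal{M}_{8,Q_0}(\abs{F}^2)(x)$ for almost every $x\in Q_0$, so it suffices to bound $\int_{Q_0}\bigl(\mathcal{M}_{8,Q_0}(\abs{F}^2)\bigr)^{p/2}$. Writing this via the layer-cake formula as $\frac p2\int_0^\infty\lambda^{p/2-1}h(\lambda)\,d\lambda$ with $h(\lambda)=\abs{\{x\in Q_0:\mathcal{M}_{8,Q_0}(\abs{F}^2)(x)>\lambda\}}$, I would split the integral at $A\lambda_1$, where $\lambda_1:=\fint_{8Q_0}\abs{F}^2$ and $A\geq A_0$ is a large constant chosen below. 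On $(0,A\lambda_1)$ the trivial bound $h(\lambda)\leq\abs{Q_0}$ already produces a contribution of the form $\frac{C_p}{\abs{Q_0}^{p/2-1}}\doublebar{F}_{L^2(8Q_0)}^p$, so all the real work is in the tail $[A\lambda_1,\infty)$.

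For $\lambda\geq\lambda_1$ the cube $Q_0$ does not satisfy $\fint_{8Q_0}\abs{F}^2>\lambda$, so, up to a null set, $\{x\in Q_0:\mathcal{M}_{8,Q_0}(\abs{F}^2)(x)>\lambda\}$ is the disjoint union of the maximal cubes $Q_j\in\mathcal{G}(Q_0)$ with $\fint_{8Q_j}\abs{F}^2>\lambda$. I would verify that each such $Q_j$ satisfies the two $F$-conditions of the lemma at level $\lambda$: membership in the collection gives $\int_{8Q_j}\abs{F}^2>8^n\abs{Q_j}\lambda$, while maximality gives $\fint_{8\widehat{Q_j}}\abs{F}^2\leq\lambda$ for the dyadic parent $\widehat{Q_j}$, and since $15Q_j\subseteq 8\widehat{Q_j}$ with $\abs{8\widehat{Q_j}}=16^n\abs{Q_j}$ this yields $\int_{15Q_j}\abs{F}^2\leq16^n\abs{Q_j}\lambda$. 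Now split the $Q_j$ into ``good'' cubes, for which also $\fint_{16Q_j}\abs{\Phi}^2\leq\gamma\lambda$, and ``bad'' cubes. For a good $Q_j$ the hypothesis applies; moreover, if $x\in Q_j$ and $\mathcal{M}_{8,Q_0}(\abs{F}^2)(x)>A\lambda$, then there is $R\in\mathcal{G}(Q_0)$ with $x\in R$ and $\fint_{8R}\abs{F}^2>A\lambda$, and such $R$ cannot be a strict dyadic ancestor of $Q_j$ (every such ancestor $R'$ has $\fint_{8R'}\abs{F}^2\leq\lambda\leq A\lambda$ by maximality), hence $R\subseteq Q_j$ and $\mathcal{M}_{8,Q_j}(\abs{F}^2)(x)>A\lambda$. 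Thus the hypothesis bounds $\abs{\{x\in Q_j:\mathcal{M}_{8,Q_0}(\abs{F}^2)>A\lambda\}}\leq C_0\bigl(\gamma/A+A^{-p_2/2}\bigr)\abs{Q_j}$, and summing over good cubes gives $C_0\bigl(\gamma/A+A^{-p_2/2}\bigr)h(\lambda)$. For the bad cubes I would not sum $\abs{Q_j}$ naively, since the dilates $16Q_j$ have no bounded overlap; instead each bad $Q_j$ lies in $\{x\in Q_0:G(x)>\gamma\lambda\}$ with $G(x):=\sup\{\fint_{16R}\abs{\Phi}^2:R\in\mathcal{G}(Q_0),\ R\owns x\}$, and disjointness gives $\sum_{\mathrm{bad}}\abs{Q_j}\leq\abs{\{x\in Q_0:G(x)>\gamma\lambda\}}$. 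Altogether,
\begin{equation*}
h(A\lambda)\leq C_0\Bigl(\frac\gamma A+\frac{1}{A^{p_2/2}}\Bigr)h(\lambda)+\abs{\{x\in Q_0:G(x)>\gamma\lambda\}},\qquad \lambda\geq\lambda_1.
\end{equation*}

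Finally I would insert this into $\int_{A\lambda_1}^\infty\lambda^{p/2-1}h(\lambda)\,d\lambda$, change variables $\lambda\mapsto A\lambda$, and integrate. The $h$-term reproduces a multiple $A^{p/2}C_0(\gamma A^{-1}+A^{-p_2/2})$ of $\int_{Q_0}\bigl(\mathcal{M}_{8,Q_0}(\abs{F}^2)\bigr)^{p/2}$, while the $G$-term, by the layer-cake formula and the $L^{p/2}$-boundedness of the (dyadic) maximal operator $G$ — this is where $p/2>1$ enters — contributes at most $C(n,p)\,\gamma^{-p/2}\int_{16Q_0}\abs{\Phi}^p$. Since $p<p_2$, choosing first $A\geq A_0$ with $C_0A^{(p-p_2)/2}\leq\frac14$ and then $\gamma\in(0,1]$ with $C_0\gamma A^{p/2-1}\leq\frac14$ makes the coefficient of the $h$-term at most $\frac12$; these choices, and hence $C_p$, depend only on $A_0$, $C_0$, $p$, $p_2$, and $n$. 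The one genuine subtlety is that $F$ need not a priori lie in $L^p(Q_0)$, so the absorption of the $h$-term into the left side is not immediately legitimate; to handle this I would run the whole computation with the $\lambda$-integral truncated to $\int_0^T$ (finite since $h\leq\abs{Q_0}$), obtain the estimate with a constant uniform in $T$, and then let $T\to\infty$ by monotone convergence. This a priori finiteness issue, together with the no-bounded-overlap observation that forces the maximal-function treatment of the bad cubes, is the main point to get right; the rest is bookkeeping of the dyadic stopping time and of the constants.
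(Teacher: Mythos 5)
Your proposal is correct and follows essentially the same route as the paper's proof: the same dyadic stopping-time decomposition of the superlevel sets of $\mathcal{M}_{8,Q_0}(\abs{F}^2)$, the same verification of the two $F$-conditions via membership and maximality of the dyadic parent, the same good/bad split with the bad cubes absorbed into the superlevel set of the capped maximal function of $\abs{\Phi}^2$, and the same truncated layer-cake integration with absorption after choosing $A$ and then $\gamma$. The points you flag as subtleties (the a priori finiteness handled by truncating the $\lambda$-integral, and localizing $\mathcal{M}_{8,Q_0}$ to $\mathcal{M}_{8,Q_j}$ on maximal cubes) are exactly the ones the paper addresses.
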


\begin{proof} Let
\begin{align*}
E(F,\lambda)=\{x\in Q_0: \mathcal{M}_{8,Q_0} (\abs{F}^2)(x)>\lambda\}
\end{align*}
and observe that by the Lebesgue differentiation theorem and the definition of the Lebesgue integral,
\begin{equation*}\int_{Q_0}\abs{F(x)}^p\,dx \leq \int_{Q_0}\mathcal{M}_{8,Q_0}(\abs{F}^2)(x)^{p/2}\,dx = \int_0^\infty \abs{E(F,\lambda)}\frac{p}{2}\lambda^{p/2-1}\,d\lambda.\end{equation*}
Choose some $\lambda>0$. Let $\mathcal{G}=\mathcal{G}(Q_0)$ be the grid of dyadic subcubes of~$Q_0$ as given above.
For each $x\in E(F,\lambda)$, there is some largest cube $Q$ such that $Q\owns x$, $Q\in\mathcal{G}$, and $\fint_{8Q} \abs{F}^2>\lambda$. Thus,
\begin{equation*}E(F,\lambda)=\bigcup_{Q\in \mathcal{F}(\lambda)} Q \end{equation*}
where $\mathcal{F}(\lambda)$ is the set of all maximal cubes in $\{Q\in\mathcal{G}:\fint_{8Q} \abs{F}^2>\lambda\}$. Observe that the cubes in $\mathcal{F}(\lambda)$ are pairwise-disjoint.
Let
\begin{equation*}\mathcal{H}(\lambda,\gamma)=\biggl\{Q\in \mathcal{F}(\lambda): \fint_{16Q} \abs{\Phi}^{2} \leq \gamma\lambda\biggr\}.\end{equation*}
If $A\geq 1$, then $E(F,A\lambda)\subset E(F,\lambda)$, and so
\begin{align*}
\abs{E(F,A\lambda)}
&=
\sum_{Q\in \mathcal{F}(\lambda)}
\abs{\{x\in Q: \mathcal{M}_{8,Q_0}(F^2)(x)>A\lambda\}}
\\&\leq
	\sum_{Q\in \mathcal{F}(\lambda)\setminus \mathcal{H}(\lambda,\gamma)}
	\abs{Q}
	+
	\sum_{Q\in \mathcal{H}(\lambda,\gamma)}
	\abs{\{x\in Q: \mathcal{M}_{8,Q_0}(F^2)(x)>A\lambda\}}
.\end{align*}
If $Q\in \mathcal{F}(\lambda)\setminus\mathcal{H}(\lambda,\gamma)$, then for all $x\in Q$ we have that
\begin{equation*}\mathcal{M}_{16,Q_0}(\abs{\Phi}^2)(x)\geq \fint_{16Q} \abs{\Phi}^2 > \gamma\lambda.\end{equation*}
If $Q\in \mathcal{F}(\lambda)$, then $\fint_{8R} \abs{F}^2\leq \lambda<\fint_{8Q} \abs{F}^2$ for all $R\in \mathcal{G}$ with $R\supsetneq Q$.
Thus, if $x\in Q$, then $\mathcal{M}_{8,Q_0}(F^2)(x)=\mathcal{M}_{8,Q}(F^2)(x)$.

Thus, if $A\geq 1$, then
\begin{align*}
\abs{E(F,A\lambda)}
&\leq
	\abs{\{x\in Q_0:\mathcal{M}_{16,Q_0}(\Phi^{2})(x)>\gamma\lambda\}}
	\\&\qquad
	+
	\sum_{Q\in \mathcal{H}(\lambda,\gamma)}
	\abs{\{x\in Q: \mathcal{M}_{8,Q}(F^2)(x)>A\lambda\}}
.\end{align*}
We claim that if ${Q\in \mathcal{H}(\lambda,\gamma)}$ and $\lambda$ is large enough, then $Q$ satisfies the conditions of the lemma. By definition of $\mathcal{H}(\lambda,\gamma)$, we have that $\fint_{16Q} \abs{\Phi}^{2} \leq \gamma\lambda$. By definition of $\mathcal{F}(\lambda)$, we have that $8^n\lambda\abs{Q} <
\int_{8Q} \abs{F}^2$. We are left with the upper bound on $\int_{15Q} \abs{F}^2$.

If $Q\in \mathcal{F}(\lambda)$, then
\begin{equation*}8^n\lambda\abs{Q} <
\int_{8Q} \abs{F}^2
\leq \doublebar{F}_{L^2(Q_0)}^2
\end{equation*}
and so $\abs{Q}<\frac{\doublebar{F}_{L^2(Q_0)}^2}{8^n\lambda}$. Let
\begin{equation*}\lambda_0=\frac{\doublebar{F}_{L^2(Q_0)}^2}{8^n\abs{Q_0}}.\end{equation*}
If $\lambda>\lambda_0$, then $\abs{Q}<\abs{Q_0}$ and so $Q\neq Q_0$.
In particular, the dyadic parent $P(Q)$ of $Q$ is an element of $\mathcal{G}$, and so by maximality of~$Q$,
\begin{equation*}\lambda 16^n\abs{Q} =\lambda\abs{8P(Q)}\geq
\int_{8P(Q)} \abs{F}^2
\end{equation*}
and because $15Q\subset 8P(Q)$,
\begin{equation*}\lambda 16^n\abs{Q}
\geq \int_{15Q} \abs{F}^2.
\end{equation*}

Thus, if $\lambda>\lambda_0$ and ${Q\in \mathcal{H}(\lambda,\gamma)}$, then for all $A\geq A_0$, where $A_0$ is as in the statement of the lemma, we have that
\begin{equation*}\abs{\{x\in Q:\mathcal{M}_{8,Q} (\abs{F}^2)(x)>A\lambda\}}
\leq C_0\biggl(\frac{\gamma}{A}+\frac{1}{A^{p_2/2}}\biggr) \abs{Q}
.\end{equation*}
Recalling that $E(F,\lambda)=\sum_{Q\in \mathcal{F}(\lambda)}\abs{Q}\geq \sum_{Q\in \mathcal{H}(\lambda,\gamma)}\abs{Q}$, we have that if $\lambda>\lambda_0$ and $A\geq A_0$, then
\begin{align*}
\abs{E(F,A\lambda)}
&\leq
	\abs{\{x\in Q_0:\mathcal{M}_{16,Q_0}(\abs{\Phi}^{2})(x)>\gamma\lambda\}}
	+
	C_0\biggl(\frac{\gamma}{A}+\frac{1}{A^{p_2/2}}\biggr)
	\abs{E(F,\lambda)}
.\end{align*}
Multiplying both sides by $A^{p/2}(p/2)\lambda^{p/2-1}$ and integrating, we have that if $A\geq A_0$ and $\Lambda>\lambda_0$, then
\begin{align*}
\int_{A\lambda_0}^{A\Lambda} \abs{E(F,\lambda)} \frac{p}{2}\lambda^{p/2-1}\,d\lambda
&\leq
C_0 \biggl(A^{(p-2)/2}\gamma+\frac{1}{A^{(p_2-p)/2}}\biggr)
\int_{\lambda_0}^\Lambda \abs{E(F,\lambda)} \frac{p}{2}\lambda^{p/2-1}\,d\lambda
\\&\qquad+
\biggl(\frac{A}{\gamma}\biggr)^{p/2}
\int_{Q_0}\mathcal{M}_{16,Q_0}(\abs{\Phi}^{2})^{p/2}
.\end{align*}
Applying the
$L^{p/2}$-boundedness of the maximal operator, and using the fact that
$\abs{E(F,\lambda)}\leq \abs{Q_0}$, we have that
\begin{align*}
\int_{0}^{A\Lambda} \abs{E(F,\lambda)} \frac{p}{2}\lambda^{p/2-1}\,d\lambda
&\leq
C_0 \biggl(A^{(p-2)/2}\gamma+\frac{1}{A^{(p_2-p)/2}}\biggr)
\int_{\lambda_0}^\Lambda \abs{E(F,\lambda)} \frac{p}{2}\lambda^{p/2-1}\,d\lambda
\\&\qquad+
\abs{Q_0}(A\lambda_0)^{p/2}
+
C_p\biggl(\frac{A}{\gamma}\biggr)^{p/2}
\int_{16Q_0}\abs{\Phi}^{p}
\end{align*}
for some constant $C_p$ depending only on $n$ and~$p$.
Let
\begin{equation*}A=\max(A_0,(4C_0)^{2/(p_2-p)}),\qquad\gamma=\min\left(\frac{1}{4C_0A^{(p-2)/2}}, 1\right).\end{equation*}
Then
\begin{align*}
\int_{0}^{A\Lambda} \abs{E(F,\lambda)} \frac{p}{2}\lambda^{p/2-1}\,d\lambda
&\leq
C_p\abs{Q_0}\lambda_0^{p/2}
+
C_p
\int_{16Q_0}\abs{\Phi}^{p}
\end{align*}
where $C_p$ depends only on $A_0$, $C_0$, $p$, $p_2$, and~$n$.
Taking the limit as $\Lambda\to \infty$ and recalling the definition of~$\lambda_0$ completes the proof.
\end{proof}

We now consider bounds on nontangential maximal operators.
Recall the definition~\eqref{dfn:NTM:modified:*} of the two-sided modified nontangential maximal function. We define
\begin{align*}
\widetilde N^{\ell}_n \arr u(x) &=
\sup_{-\ell\leq t \leq\ell} \>\sup_{\abs{x-y}<\abs{t}} \biggl(\fint_{B((y,t),\abs{t}/2)}
\abs{\arr u}^2\biggr)^{1/2}
,\\
\widetilde N^{\ell}_f \arr u(x) &=
\sup_{\abs{t}\geq\ell}\> \sup_{\abs{x-y}<\abs{t}}  \biggl(\fint_{B((y,t),\abs{t}/2)} \abs{\arr u}^2\biggr)^{1/2}
.\end{align*}
Observe that
\begin{equation*}\widetilde N_* \arr u(x)=\max(\widetilde N^{\ell}_f \arr u(x), \widetilde N^{\ell}_n \arr u(x)).\end{equation*}
We may thus use bounds on $\widetilde N_n^{\ell}$ and $\widetilde N_f^{\ell}$ to bound~$\widetilde N_*$.

We begin with a very simple bound on $\widetilde N_f^\ell$.
The following lemma is well known; for the sake of completeness we will provide a proof.
\begin{lem}\label{lem:N:far}
If $x\in\R^n$ and $\ell>0$, and if $0<p<\infty$, then
\begin{equation*}
\widetilde N_f^{\ell}\arr u(x)^p
\leq
2^n\fint_{\abs{x-z}<\ell}\widetilde N_f^{\ell}\arr u(z)^p\,dz
\end{equation*}
whenever the right hand side is finite.
\end{lem}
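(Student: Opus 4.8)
The plan is to reduce the statement to an elementary volume comparison for two overlapping balls in $\R^n$, together with the observation that a single Whitney ball is ``shared'' by all nearby base points. Fix $\lambda<\widetilde N_f^\ell\arr u(x)$. First I would unwind the definition of $\widetilde N_f^\ell$: there are $t$ with $\abs t\geq\ell$ and $y\in\R^n$ with $\abs{x-y}<\abs t$ for which $\bigl(\fint_{B((y,t),\abs t/2)}\abs{\arr u}^2\bigr)^{1/2}>\lambda$. The point to exploit is that the same ball $B((y,t),\abs t/2)$ is an admissible competitor in the supremum defining $\widetilde N_f^\ell\arr u(z)$ for \emph{every} $z$ with $\abs{z-y}<\abs t$, since the remaining constraint $\abs t\geq\ell$ is independent of the base point. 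Hence, with $B(y,\abs t)$ denoting the corresponding ball in $\R^n$, one has $B(y,\abs t)\cap B(x,\ell)\subseteq\{z\in B(x,\ell):\widetilde N_f^\ell\arr u(z)>\lambda\}$.

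The main step is the geometric estimate
\[\abs{B(y,\abs t)\cap B(x,\ell)}\geq 2^{-n}\abs{B(x,\ell)}\qquad\text{whenever }\abs{x-y}<\abs t\ \text{and}\ \ell\leq\abs t.\]
I would establish it by exhibiting a ball of radius $\ell/2$ contained in the intersection. If $\abs t-\abs{x-y}\geq\ell/2$, then $B(x,\ell/2)$ itself lies in $B(y,\abs t)$, and we are done. Otherwise, set $s=\ell/2-(\abs t-\abs{x-y})\in(0,\ell/2)$ and let $c$ be the point at distance $s$ from $x$ on the segment toward $y$; then $\abs{c-x}=s<\ell/2$ and (using $s<\abs{x-y}$, which follows from $\ell\leq\abs t$) $\abs{c-y}=\abs{x-y}-s=\abs t-\ell/2$, so a triangle-inequality check gives $B(c,\ell/2)\subseteq B(x,\ell)\cap B(y,\abs t)$. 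Combined with the previous paragraph, this yields $\abs{\{z\in B(x,\ell):\widetilde N_f^\ell\arr u(z)>\lambda\}}\geq 2^{-n}\abs{B(x,\ell)}$.

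Finally I would integrate: the last inequality gives $\int_{B(x,\ell)}\widetilde N_f^\ell\arr u(z)^p\,dz\geq\lambda^p\,2^{-n}\abs{B(x,\ell)}$ for every $\lambda<\widetilde N_f^\ell\arr u(x)$, and letting $\lambda\uparrow\widetilde N_f^\ell\arr u(x)$ and dividing by $\abs{B(x,\ell)}$ produces the claimed bound; the finiteness of the right-hand side is used only to make the resulting inequality non-vacuous (if it failed while the left-hand side were infinite, taking $\lambda\to\infty$ in the same estimate would already force the right-hand side to be infinite). I do not anticipate a real obstacle; the only delicate point is the two-ball volume comparison, which the choice of the auxiliary center $c$ above settles cleanly.
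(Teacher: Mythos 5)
Your proposal is correct and follows essentially the same route as the paper: both arguments rest on the observation that a single Whitney ball $B((y,t),\abs t/2)$ is an admissible competitor for $\widetilde N_f^\ell\arr u(z)$ at every $z$ with $\abs{z-y}<\abs t$, together with the fact that $\{z:\abs{z-y}<\abs t\}\cap\{z:\abs{x-z}<\ell\}$ contains a disk of radius $\ell/2$ (a fact the paper asserts without proof and you verify explicitly). Your level-set formulation with $\lambda\uparrow\widetilde N_f^\ell\arr u(x)$ is just a cosmetic variant of the paper's direct averaging over that disk.
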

\begin{proof}
Let $x\in\R^n$. Choose some $t$ with $\abs{t}\geq \ell$ and some $y\in\R^n$ with $\abs{x-y}<\abs{t}$.
If $z\in\R^n$ is such that $\abs{z-y}<\abs{t}$, then
\begin{equation*}\biggl(\fint_{B((y,t),\abs{t}/2)} \abs{\arr u}^2\biggr)^{p/2}\leq \widetilde N_f^{\ell}\arr u(z)^p.\end{equation*}
The set of all $z$ with $\abs{y-z}<\abs{t}$ and with $\abs{x-z}<\ell$ contains a disk $\Delta_y$ of radius $\ell/2$, and so
\begin{equation*}\biggl(\fint_{B((y,t),\abs{t}/2)} \abs{\arr u}^2\biggr)^{p/2} \leq
\fint_{\Delta_y} \widetilde N_f^{\ell} \arr u(z)^p\,dz
\leq
2^n\fint_{\abs{x-z}<\ell}\widetilde N_f^{\ell} \arr u(z)^p\,dz
\end{equation*}
whenever $\abs{x-y}<\abs{t}$.
Recalling the definition of $\widetilde N_f^{\ell} \arr u(x)$ completes the proof.
\end{proof}

We now show how local estimates may be used with Lemma~\ref{lem:lambda} to produce global estimates.

\begin{lem}\label{lem:N:+}
Suppose that $\arr u\in L^2_{loc}(\R^\dmn_+\cup\R^\dmn_-)$ is such that $\widetilde N_*\arr u\in L^2(\R^n)$.
Suppose that there exists a number $p_2>2$, a function $\Phi$, and a set of functions $\arr u_Q$ indexed by cubes $Q$ in $\R^n$ such that,
for any cube $Q\in \R^n$,
\begin{align*}
\doublebar{\widetilde N_n^{\ell}\arr u_Q}_{L^2(8Q)}&\leq C_0\doublebar{\Phi}_{L^2(16Q)}
,\\
\doublebar{\widetilde N_n^{\ell}(\arr u-\arr u_Q)}_{L^{p_2}(8Q)} &\leq
\frac{C_0}{\abs{Q}^{1/2-1/p_2}} \doublebar{\Phi}_{L^2(16Q)}
+ \frac{C_0}{\abs{Q}^{1/2-1/p_2}} \doublebar{\widetilde N_*\arr u}_{L^2(15Q)}
\end{align*}
where $\ell=\ell(Q)/4$.

Then, for every $p$ with $2<p<p_2$, there is a number $C_p$ depending only on $p$, $p_2$, $C_0$ and the dimension $n$, such that
\begin{align*}
\doublebar{\widetilde N_*\arr u}_{L^p(\R^n)}
&\leq C_p \doublebar{\Phi}_{L^p(\R^n)}
\end{align*}
whenever the right hand side is finite.
\end{lem}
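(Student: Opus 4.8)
The plan is to invoke Lemma~\ref{lem:lambda} with $F=\widetilde N_*\arr u$, with $\Phi$ the given function, and with $Q_0$ an arbitrary cube, and then to exhaust $\R^n$ by such cubes. We may assume $\doublebar{\Phi}_{L^p(\R^n)}<\infty$, and $\widetilde N_*\arr u\in L^2(\R^n)$ is part of the hypothesis, so $F\in L^2(8Q_0)$ and $\Phi\in L^p(16Q_0)$ for every cube $Q_0$ and the only thing to check is the good-$\lambda$ hypothesis of Lemma~\ref{lem:lambda}: given $0<\gamma\leq1$, $A$ at least a constant $A_0=A_0(n)$, $\lambda>0$, and a cube $Q$ with $\fint_{16Q}\abs{\Phi}^2\leq\gamma\lambda$, $8^n\abs{Q}\lambda<\int_{8Q}\abs{F}^2$, and $\int_{15Q}\abs{F}^2\leq16^n\abs{Q}\lambda$, one must show
\begin{equation*}
\abs{\{x\in Q:\mathcal{M}_{8,Q}(\abs{F}^2)(x)>A\lambda\}}\leq \widetilde C_0\Bigl(\frac{\gamma}{A}+\frac{1}{A^{p_2/2}}\Bigr)\abs{Q}
\end{equation*}
with $\widetilde C_0$ depending only on $C_0$, $n$, and $p_2$.

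To verify this I would fix such a $Q$, set $\ell=\ell(Q)/4$, and split, using the identity $\widetilde N_*\arr v=\max(\widetilde N^{\ell}_n\arr v,\widetilde N^{\ell}_f\arr v)$ and the subadditivity of $\widetilde N^{\ell}_n$ in its argument,
\begin{equation*}
\widetilde N_*\arr u(z)\leq \widetilde N^{\ell}_n\arr u_Q(z)+\widetilde N^{\ell}_n(\arr u-\arr u_Q)(z)+\widetilde N^{\ell}_f\arr u(z)\qquad\text{for every }z.
\end{equation*}
The decisive point is that the ``far'' term is essentially the constant $\lambda$ on $8Q$: by Lemma~\ref{lem:N:far} with exponent $2$ and radius $\ell$, together with $\widetilde N^{\ell}_f\arr u\leq\widetilde N_*\arr u=F$, the inclusion $B(z,\ell)\subset15Q$ for $z\in8Q$, and $\abs{B(z,\ell)}\approx\abs{Q}$, the hypothesis $\int_{15Q}\abs{F}^2\leq16^n\abs{Q}\lambda$ gives $\widetilde N^{\ell}_f\arr u(z)^2\leq C\lambda$ for all $z\in8Q$. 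Hence $\abs{F(z)}^2\leq C[(\widetilde N^{\ell}_n\arr u_Q(z))^2+(\widetilde N^{\ell}_n(\arr u-\arr u_Q)(z))^2+\lambda]$ for $z\in8Q$, and since $8R\subset8Q$ for every $R\in\mathcal{G}(Q)$, averaging over $8R$ and taking the supremum yields, for $x\in Q$,
\begin{equation*}
\mathcal{M}_{8,Q}(\abs{F}^2)(x)\leq C_1\mathcal{M}_{8,Q}\bigl(\1_{8Q}(\widetilde N^{\ell}_n\arr u_Q)^2\bigr)(x)+C_1\mathcal{M}_{8,Q}\bigl(\1_{8Q}(\widetilde N^{\ell}_n(\arr u-\arr u_Q))^2\bigr)(x)+C_1\lambda.
\end{equation*}

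The remaining estimates are routine. Since $\mathcal{M}_{8,Q}$ is pointwise dominated by a fixed multiple of the Hardy--Littlewood maximal operator, it is of weak type $(1,1)$ and bounded on $L^{p_2/2}$. Taking $A_0=\max(1,2C_1)$ absorbs the term $C_1\lambda$, so that the set in question is contained in the union of $\{x\in Q:\mathcal{M}_{8,Q}(\1_{8Q}(\widetilde N^{\ell}_n\arr u_Q)^2)(x)>cA\lambda\}$ and $\{x\in Q:\mathcal{M}_{8,Q}(\1_{8Q}(\widetilde N^{\ell}_n(\arr u-\arr u_Q))^2)(x)>cA\lambda\}$; the weak $(1,1)$ bound for the first, with $\doublebar{\widetilde N^{\ell}_n\arr u_Q}_{L^2(8Q)}\leq C_0\doublebar{\Phi}_{L^2(16Q)}$ and $\fint_{16Q}\abs{\Phi}^2\leq\gamma\lambda$, produces the $\gamma/A$ term; the $L^{p_2/2}$ bound for the second, with the hypothesis $\doublebar{\widetilde N^{\ell}_n(\arr u-\arr u_Q)}_{L^{p_2}(8Q)}\leq C_0\abs{Q}^{1/p_2-1/2}(\doublebar{\Phi}_{L^2(16Q)}+\doublebar{\widetilde N_*\arr u}_{L^2(15Q)})$ and the bounds $\doublebar{\Phi}_{L^2(16Q)}\leq(16^n\abs{Q}\lambda)^{1/2}$ (valid since $\gamma\leq1$) and $\doublebar{\widetilde N_*\arr u}_{L^2(15Q)}\leq(16^n\abs{Q}\lambda)^{1/2}$, produces the $A^{-p_2/2}$ term once the powers of $\lambda$ and $\abs{Q}$ are balanced. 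This gives the good-$\lambda$ hypothesis, so Lemma~\ref{lem:lambda} applies and yields $\int_{Q_0}\abs{F}^p\leq C_p\abs{Q_0}^{1-p/2}\doublebar{F}_{L^2(8Q_0)}^p+C_p\int_{16Q_0}\abs{\Phi}^p$. Letting $Q_0$ increase to $\R^n$, the first term vanishes since $p/2-1>0$ and $\doublebar{F}_{L^2(8Q_0)}\leq\doublebar{\widetilde N_*\arr u}_{L^2(\R^n)}<\infty$, and monotone convergence gives $\doublebar{\widetilde N_*\arr u}_{L^p(\R^n)}^p\leq C_p\doublebar{\Phi}_{L^p(\R^n)}^p$.

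I expect the main obstacle to be not any single estimate but the geometric bookkeeping in the decomposition step: one must verify that $\widetilde N^{\ell}_f\arr u$ is controlled by $\lambda$ on the whole of $8Q$ (not merely on $Q$), which forces $B(z,\ell)\subset15Q$ and is precisely why the dilation factors $8$, $15$, $16$ occur in Lemma~\ref{lem:lambda}, and that $8R\subset8Q$ for every subcube $R\in\mathcal{G}(Q)$, so that the pointwise bound for $\abs{F}^2$ on $8Q$ transfers to $\mathcal{M}_{8,Q}(\abs{F}^2)$ on $Q$; keeping these inclusions and the accompanying measure comparisons straight is the only delicate part.
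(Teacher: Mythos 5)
Your proposal is correct and follows essentially the same route as the paper: both reduce to Lemma~\ref{lem:lambda} with $F=\widetilde N_*\arr u$, split via $\widetilde N_*\arr u\leq \widetilde N_n^\ell\arr u_Q+\widetilde N_n^\ell(\arr u-\arr u_Q)+\widetilde N_f^\ell\arr u$, and obtain the $\gamma/A$ and $A^{-p_2/2}$ terms from the weak $(1,1)$ and $L^{p_2/2}$ bounds for the maximal operator, with the far term absorbed for $A\geq A_0(n)$ via Lemma~\ref{lem:N:far} and the stopping-time upper bound on $\int_{15Q}\abs{F}^2$. The only cosmetic difference is that the paper splits the level set of $\mathcal{M}_{8,Q}(\abs{F}^2)$ into three level sets directly and shows the far one is empty, whereas you first establish the pointwise bound on $8Q$ and then apply $\mathcal{M}_{8,Q}$; your explicit check that $\widetilde N_f^\ell\arr u$ is controlled on all of $8Q$ (not just $Q$) is the right level of care.
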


\begin{proof}
We will use Lemma~\ref{lem:lambda}, with $F=\widetilde N_*\arr u$, and let ${Q_0}\to \R^n$.

Choose some $\gamma$ with $0<\gamma\leq 1$ and some $\lambda>0$. Let $Q\subset Q_0$ be such that
\begin{align}
\label{eqn:Q:Psi}
\fint_{16Q} \abs{\Phi}^2 &\leq \gamma\lambda,
\\
\label{eqn:Q:big}
8^n\abs{Q}\lambda&<\int_{8Q} \abs{\widetilde N_*\arr u}^2,
\\
\label{eqn:Q:small}
\int_{15Q} \abs{\widetilde N_*\arr u}^2 &\leq 16^n\abs{Q}\lambda.\end{align}
We need to show that there is some $C_0$ and $A_0$ independent of $\gamma$ and $\lambda$ such that, if $A\geq A_0$, then
\begin{equation*}\abs{\{x\in Q:\mathcal{M}_{8,Q} ((\widetilde N_*\arr u)^2)(x)>A\lambda\}}
\leq C_0\biggl(\frac{\gamma}{A}+\frac{1}{A^{p_2/2}}\biggr) \abs{Q}
.\end{equation*}

Let $\arr u_n=\arr u_{Q}$, and let $\arr u_f=\arr u-\arr u_n=\arr u-\arr u_{Q}$. We compute
\begin{align*}\widetilde N_*\arr u(x)^2
&\leq
\max\bigl((\widetilde N_n^\ell\arr u_n(x)
+ \widetilde N_n^\ell\arr u_f(x))^2
, \widetilde N_f^\ell\arr u(x)^2\bigr)
\\&\leq
2\widetilde N_n^\ell\arr u_n(x)^2
+ 2\widetilde N_n^\ell\arr u_f(x)^2
+ \widetilde N_f^\ell\arr u(x)^2
\end{align*}
and so
\begin{multline}
\label{eqn:E:split}
\abs{\{x\in Q:\mathcal{M}_{8,Q} ((\widetilde N_*\arr u)^2)(x) > A\lambda\}}
\\\begin{aligned}
&\leq
	\abs{\{x\in Q:\mathcal{M}_{8,Q}((\widetilde N_n^\ell\arr u_n)^2)(x)>A\lambda/5\}}
	\\&\qquad+
	\abs{\{x\in Q:\mathcal{M}_{8,Q}((\widetilde N_n^\ell\arr u_f)^2)(x)>A\lambda/5\}}
	\\&\qquad+
	\abs{\{x\in Q:\mathcal{M}_{8,Q}((\widetilde N_f^\ell\arr u)^2)(x)>A\lambda/5\}}
.\end{aligned}\end{multline}

By assumption on $\arr u_Q$,
\begin{equation*}
\doublebar{\widetilde N_n^\ell\arr u_n}_{L^2(8Q)}^2
\leq C_0\doublebar{\Phi}_{L^2(16Q)}^2.\end{equation*}
By the weak $L^1$ boundedess of $\mathcal{M}$,
\begin{equation*}
\abs{\{x\in Q: \mathcal{M}_{8,Q}((\widetilde N_n^\ell\arr u_n))^2)(x) > A\lambda/5\}}
\leq
\frac{C\doublebar{\Phi}_{L^2(16Q)}^2}{A\lambda}
.\end{equation*}
By the bound~\eqref{eqn:Q:Psi},
\begin{equation}
\label{eqn:E:bound:1}
\abs{\{x\in Q: \mathcal{M}_{8,Q}((\widetilde N_n^\ell\arr u_n)^2)(x) > A\lambda/5\}}
\leq
C\frac{\gamma}{A}\abs{Q}
.\end{equation}

By assumption on $\arr u_f=\arr u-\arr u_Q$ and by the bounds \eqref{eqn:Q:Psi} and~\eqref{eqn:Q:small},
\begin{equation*}
\doublebar{\widetilde N_n^{\ell}\arr u_f}_{L^{p_2}(8Q)}
\leq
\frac{C_0}{\abs{Q}^{1/2-1/p_2}}
\bigl(\doublebar{\Phi}_{L^2(16Q)}
+  \doublebar{\widetilde N_*\arr u}_{L^2(15Q)}\bigr)
\leq
C\abs{Q}^{1/p_2}\sqrt{\lambda}
.\end{equation*}
By boundedness of $\mathcal{M}$ on $L^{p_2/2}$,
\begin{equation*}\doublebar{\mathcal{M}_{8,Q}((\widetilde N_n^\ell\arr u_f)^2)}_{L^{p_2/2}(Q)}
\leq C\abs{Q}^{2/p_2}\lambda
.\end{equation*}
Thus,
\begin{align}
\label{eqn:E:bound:2}
\abs{\{x\in Q:\mathcal{M}_{8,Q}((\widetilde N_n^\ell\arr u_f)^2)>A\lambda/5\}}
&\leq
C\frac{1}{A^{p_2/2}}\abs{Q}.
\end{align}

By Lemma~\ref{lem:N:far},
\begin{equation*}
\widetilde N_f^\ell\arr u(x)^2
\leq
2^n\fint_{\abs{x-z}<\ell}\widetilde N_f^\ell\arr u(z)^2\,dz
.\end{equation*}
If $x\in Q$ then the region of integration is contained in $(3/2)Q\subset 15Q$. Thus, by the bound~\eqref{eqn:Q:small} and the definition of~$\ell$,
\begin{align*}
\sup_{x\in Q}
\widetilde N_f^\ell\arr u(x)^2
&\leq
\frac{2^n}{\omega_n \ell^n}\int_{15Q}(\widetilde N_f^\ell\arr u)^2
\leq
\frac{32^n}{\omega_n \ell^n}\lambda \abs{Q}
=
\frac{128^n}{\omega_n }\lambda
\end{align*}
where $\omega_n$ is the area of the unit disk in $\R^n$. We let $A_0=5\frac{128^n}{\omega_n}$; then if $A>A_0$, then
\begin{equation}
\label{eqn:E:bound:3}
\{x\in Q:\mathcal{M}_{8,Q}(\widetilde N_f^\ell\arr u^2)(x)>A\lambda/5\}=\emptyset
.\end{equation}

By the bounds~\cref{eqn:E:split,eqn:E:bound:1,eqn:E:bound:2,eqn:E:bound:3}, the conditions of Lemma~\ref{lem:lambda} are satisfied, and so the proof of Lemma~\ref{lem:N:+} is complete.
\end{proof}

The next lemma allows us to apply Lemma~\ref{lem:N:+} in the case where $\arr u-\arr u_Q=\nabla^{m-j} v$, where $Lv=0$ in $8Q\times (-\ell(Q),\ell(Q))$.

\begin{lem}
\label{lem:N:solution}
Let $L$ be an operator of the form~\eqref{eqn:weak} of order~$2m$ associated to bounded $t$-independent coefficients~$\mat A$ that satisfy the ellipticity condition~\eqref{eqn:elliptic}.

Let $Q$ be a cube and let $\ell=\ell(Q)/4$.
Let $0\leq j\leq m$ and let $2<p_2<p_{j,L}^+$, where $p_{j,L}^+$ is as in the bound~\eqref{eqn:Meyers}. Let $v\in \dot W^{m,2}_{loc}(10Q\times(-3\ell,3\ell))$ and suppose that $Lv=0$ in $10Q\times(-3\ell,3\ell)$. Then
\begin{equation*}
\biggl(\fint_{8Q} \widetilde N^\ell_n(\nabla^{m-j}v)^{p_2}\biggr)^{1/{p_2}}
\leq C(j,p_2)\biggl(\fint_{10Q}\widetilde N_n^{3\ell} (\nabla^{m-j}v)^2\biggr)^{1/2}
.\end{equation*}
\end{lem}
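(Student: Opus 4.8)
The statement is a reverse-Hölder (self-improvement) estimate for the truncated nontangential maximal function of $\nabla^{m-j}v$: it upgrades $L^2$ control at truncation height $3\ell$ on the cube $10Q$ to $L^{p_2}$ control at truncation height $\ell$ on $8Q$. The plan is to obtain it by combining three facts available to us for $t$-independent solutions: the Caccioppoli inequality (Lemma~\ref{lem:Caccioppoli}), the Meyers higher-integrability estimate~\eqref{eqn:Meyers} (applicable since $p_2<p_{j,L}^+$), and the slice estimate Lemma~\ref{lem:slices}, which is where $t$-independence enters and which is likewise valid at the exponent $p_2$ because $p_{m-(m-j),L}^+=p_{j,L}^+$. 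I would also use two elementary ingredients: the $L^q$-boundedness of the Hardy--Littlewood maximal operator $M_x$ on $\R^n$ for $q=p_2/2>1$, and a Fubini/definition-unwinding bound showing that, for every $\rho$ with $0<|\rho|\le 2\ell$, $\fint_{9Q\times(\rho/2,\,3\rho/2)}|\nabla^{m-j}v|^2\le C\fint_{10Q}\widetilde N_n^{3\ell}(\nabla^{m-j}v)^2$ (integrate $\widetilde N_n^{3\ell}(\nabla^{m-j}v)(z)^2\ge c|\rho|^{-\dmn}\int_{B((z,\rho),|\rho|/2)}|\nabla^{m-j}v|^2$ over $z\in 10Q$ and use Fubini); varying $\rho$ this controls every slice $\fint_{9Q}|\nabla^{m-j}v(\cdot,s)|^2\,dx$ by the right-hand side.

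\emph{Step 1 (geometry).} Check that for $x\in 8Q$, $|t|\le\ell=\ell(Q)/4$ and $|y-x|<|t|$, every ball $B((y,t),|t|/2)$, its double $B((y,t),|t|)$, and the slab $\Delta(x,2\ell)\times(-3\ell,3\ell)$ lie inside $10Q\times(-3\ell,3\ell)$; hence $Lv=0$ on all of these, and, by $t$-independence, so is $L(\partial_t^kv)=0$ for each $k\ge 0$.

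\emph{Step 2 (pointwise domination).} Dominate $\widetilde N_n^\ell(\nabla^{m-j}v)$ pointwise on $8Q$ by a horizontal maximal function of a fixed-scale auxiliary quantity built from the equation. For a tip $(y,t)$ as above the ball $B((y,t),|t|/2)$ lies in a horizontal slab of thickness $\sim|t|$ over $\Delta(x,\tfrac32|t|)$, so its average of $|\nabla^{m-j}v|^2$ is at most a constant times a slab average, which after averaging in the vertical variable is at most $C\,M_x\big[|\nabla^{m-j}v(\cdot,s)|^2\mathbf 1_{9Q}\big](x)$ for the relevant heights $s$. For heights $|t|$ comparable to $\ell$ this is already a fixed-scale object; for small heights one uses Lemma~\ref{lem:slices} (together with Caccioppoli, to absorb the factors produced when the averaging ball shrinks toward $\{t=0\}$ and to pass between comparable balls) to reduce again to a fixed-scale quantity. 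The outcome is a bound of the form $\widetilde N_n^\ell(\nabla^{m-j}v)(x)^2\le C\,M_x\big[G^2\mathbf 1_{9Q}\big](x)$, where $G(z)=\sup_{\ell/8\le|s|\le\ell}\big(\fint_{B((z,s),\ell/8)}|\nabla^{m-j}v|^2\big)^{1/2}$ is a fixed-scale object to which~\eqref{eqn:Meyers} applies directly, and whose $L^2(9Q)$ norm is controlled by $\big(\fint_{10Q}\widetilde N_n^{3\ell}(\nabla^{m-j}v)^2\big)^{1/2}|9Q|^{1/2}$ via the Fubini bound above.

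\emph{Step 3 (conclusion).} Raise the bound of Step~2 to the power $p_2/2$, apply the $L^{p_2/2}$-boundedness of $M_x$ to get $\int_{8Q}\widetilde N_n^\ell(\nabla^{m-j}v)^{p_2}\le C\int_{9Q}G^{p_2}$, then use~\eqref{eqn:Meyers} on the defining balls of $G$ (summing over a bounded-overlap cover, using $\ell^{p_2/2}\hookrightarrow\ell^1$) to trade the $L^{p_2}$ integrability of $G$ for its $L^2(\text{slab})$ norm, and finally invoke the Fubini bound to replace that $L^2$ slab norm by $\widetilde N_n^{3\ell}(\nabla^{m-j}v)$ on $10Q$; the constant is of the form $C(j,p_2)$ since only the quantitative forms of~\eqref{eqn:Meyers} and Lemma~\ref{lem:slices} are used.

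\emph{Main obstacle.} The crux is the small-height ($t\to 0$) part of $\widetilde N_n^\ell$, where the averaging balls $B((y,t),|t|/2)$ degenerate to a point on the interior slice $\{t=0\}$. For a solution with merely bounded measurable coefficients there is no pointwise control of $\nabla^{m-j}v$ on a slice, and a naive iteration of Lemma~\ref{lem:slices} from scale $|t|$ up to scale $\ell$ loses a factor $(\ell/|t|)^\theta$; correspondingly a crude dyadic-in-height decomposition that replaces a supremum over scales by a sum over scales is hopelessly lossy. The estimate survives only because $t$-independence (Lemma~\ref{lem:slices}) allows one to keep the averages at exponent $2$ over slices while sliding the height, and because the Meyers condition $p_2<p_{j,L}^+$ — exactly the hypothesis of the lemma — furnishes the extra integrability to close the argument; organizing the vertical averaging in Step~2 so that these two ingredients combine without incurring a scale loss is the delicate point.
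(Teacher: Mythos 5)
Your overall skeleton --- dominate $\widetilde N_n^\ell(\nabla^{m-j}v)$ on $8Q$ pointwise by a horizontal maximal function of a fixed-scale quantity, apply the $L^{p_2}$ (or $L^{p_2/2}$) boundedness of the maximal operator, and then convert the fixed-scale quantity back to the right-hand side using the bound~\eqref{eqn:Meyers}, Lemma~\ref{lem:slices}, and the Caccioppoli inequality --- is the same as the paper's. But there is a genuine gap at the crux, in your Step~2: you assert $\widetilde N_n^\ell(\nabla^{m-j}v)(x)^2\le C\,M_x[G^2\1_{9Q}](x)$ with $G$ built from Whitney balls at heights comparable to $\ell$, and you justify the small-height part only by invoking Lemma~\ref{lem:slices} and Caccioppoli ``to absorb the factors produced when the averaging ball shrinks toward $\{t=0\}$.'' Those two tools do not do this. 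Lemma~\ref{lem:slices} controls a slice integral by a solid integral over a slab of the \emph{same} horizontal extent; applied to a Whitney ball $B((y,t),\abs{t}/2)$ with $\abs{t}$ small it either keeps you at scale $\abs{t}$ or, if you enlarge the horizontal cube to scale $\ell$ before normalizing by $\abs{t}^{n+1}$, produces exactly the factor $(\ell/\abs{t})^n$ that you yourself flag as fatal. Since the coefficients are merely bounded and measurable, no De Giorgi--Nash--Moser estimate is available to convert a tiny average into a fixed-scale one. You correctly identify this as ``the delicate point,'' but you never supply the mechanism that resolves it, so the proof as written does not close.

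The paper's mechanism is a fundamental-theorem-of-calculus decomposition in the vertical variable (quoted from \cite[Lemma~3.19]{BarHM18p}): the Whitney average of $\abs{\nabla^{m-j}v}^2$ over $B((y,t),\abs{t}/2)$ is controlled by the square of $\fint_{\abs{z-y}<\abs t}\abs{\Tr_{m-j}v(z)}\,dz$ plus the square of $\fint_{\abs{z-y}<\abs t}\int_{-2\abs t}^{2\abs t}\abs{\partial_r^{m-j+1}v(z,r)}\,dr\,dz$. Both quantities are monotone under enlarging the vertical range of integration, so they are dominated by $\mathcal M(\1_{9Q}\Tr_{m-j}v)(x)$ and $\mathcal M U(x)$ with $U(z)=\1_{9Q}(z)\int_{-2\ell}^{2\ell}\abs{\partial_r^{m-j+1}v(z,r)}\,dr$ --- fixed-scale functions of $z$ alone, with no degeneration as $t\to0$. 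The price is one extra derivative, which is recovered by the Caccioppoli inequality at scale $\ell$ after applying~\eqref{eqn:Meyers} to the solution $\partial_t^{m-j+1}v$ (here $t$-independence is used again), while the trace term is handled by Lemma~\ref{lem:slices} together with~\eqref{eqn:Meyers}. If you insert this decomposition into your Step~2, the rest of your outline goes through.
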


\begin{proof}
Let $x\in 8Q$, let $-\ell<{t}<\ell$ with $t\neq 0$, and let $\abs{x-y}<\abs{t}$. We wish to bound the quantity
\begin{equation*}\biggl(\fint_{B((y,t),\abs{t}/2)} \abs{\nabla^{m-j}v(z,r)}^2\,dz\,dr\biggr)^{1/2}
\end{equation*}
with a bound depending only on $x$ and~$v$, not $y$ or~$t$.

Using the same argument as in the proof of \cite[Lem\-ma~3.19]{BarHM18p}, we have that
\begin{align*}\fint_{B((y,t),\abs{t}/2)} \abs{\nabla^{m-j} v(z,r)}^2\,dr\,dz
&\leq
C \biggl( \fint_{\abs{z-y}<\abs{t}} \int_{-2\abs{t}}^{2\abs{t}} \abs{\partial_r^{m-j+1} v(z,r)}\,dr\,dz\biggr)^2
\\&\qquad+C \biggl(\fint_{\abs{z-y}<\abs{t}} \abs{\Tr_{m-j} v(z)}\,dz\biggr)^2
.\end{align*}
Thus, if $x\in 8Q$, then
\begin{equation*}\widetilde N^\ell_n(\nabla^{m-j}v)(x)
\leq C\mathcal{M} U(x)+ C\mathcal{M} (\1_{9Q}\Tr_{m-j}v)(x)\end{equation*}
where $\mathcal{M}$ is the Hardy-Littlewood maximal operator and where
\begin{equation*}U(z)=\1_{9Q} V(z),\quad V(z)=\int_{-2\ell}^{2\ell} \abs{\partial_r^{m-j+1} v(z,r)}\,dr.\end{equation*}
By Lemma~\ref{lem:slices} and the bound~\eqref{eqn:Meyers}, if ${p_2}<p_{j,L}^+$ then
\begin{align*}\int_{\R^n}\abs{\1_{9Q}\Tr_{m-j}v(z)}^{p_2}\,dz
&\leq
C(j,p_2)\abs{Q}\biggl(\fint_{10Q} \fint_{-\ell}^{\ell}\abs{\nabla^{m-j}v(z,r)}^2\,dr\,dz\biggr)^{p_2/2}
.\end{align*}
By H\"older's inequality,
\begin{equation*}\int_{\R^n} U(z)^{p_2}\,dz \leq (4\ell)^{p_2-1}\int_{9Q} \int_{-2\ell}^{2\ell} \abs{\partial_r^{m-j+1} v(z,r)}^{p_2}\,dr\,dz.\end{equation*}
Because $\partial_\dmn^{m-j+1}  v$ is a solution, we may apply the
bound~\eqref{eqn:Meyers} provided $p_2<p_{m,L}^+$; by this bound and the Caccioppoli inequality,
\begin{equation*}\int_{\R^n} U(z)^{p_2}\,dz \leq C(m,p_2)\abs{Q}
\biggl(\fint_{10Q} \fint_{-3\ell}^{3\ell} \abs{\partial_r^{m-j} v(z,r)}^2\,dr\,dz\biggr)^{p_2/2}.\end{equation*}
By the $L^{p_2}$-boundedness of the maximal operator, we have that
\begin{equation*}\int_{8Q} \widetilde N_n^\ell(\nabla^{m-j} v)(x)^{p_2}\,dx
\leq
C(j,p_2)\abs{Q}
\biggl(\fint_{10Q} \fint_{-3\ell}^{3\ell} \abs{\partial_r^{m-j} v(z,r)}^2\,dr\,dz\biggr)^{p_2/2}.\end{equation*}
It is straightforward to control the right hand side by ${\widetilde N_n^{3\ell} (\nabla^{m-j}v)}$.
This completes the proof.
\end{proof}

We now establish the bounds \cref{eqn:S:N:rough:intro,eqn:D:N:rough:intro,eqn:S:N:intro,eqn:D:N:intro}.

\begin{thm}\label{thm:potentials:N:+}
Let $L$ be an operator of the form~\eqref{eqn:weak} of order~$2m$ associated to bounded $t$-independent coefficients~$\mat A$ that satisfy the ellipticity condition~\eqref{eqn:elliptic}.
Let $p_{j,L}^+$ be as in the bound~\eqref{eqn:Meyers}.
If $2<p<p_{1,L}^+$, if $\arr h\in L^p(\R^n)\cap L^2(\R^n)$, and if $\arr f=\Tr_{m-1} F$ for some smooth, compactly supported function~$F$, then
\begin{align}
\doublebar{\widetilde N_*(\nabla^{m-1}\s^L_\nabla \arr h)}_{L^p(\R^n)}
&\leq C(1,p) \doublebar{\arr h}_{L^p(\R^n)}
,\\
\label{eqn:D:N:rough:+}
\doublebar{\widetilde N_*(\nabla^{m-1}\D^{\mat A} \arr f)}_{L^p(\R^n)}
&\leq C(1,p) \doublebar{\arr f}_{\dot W\!A_{m-1}^{0,p}(\R^n)}
.\end{align}
Let $j=0$ or $j=1$ and let $2<p<p_{j,L}^+$. Let $\arr g\in L^p(\R^n)\cap L^2(\R^n)$ and let $\arr\varphi=\Tr_{m-1}\phi$ for some smooth, compactly supported function~$\phi$. Then
\begin{align}
\label{eqn:S:N:+}
\doublebar{\widetilde N_*(\nabla^{m-j}\partial_t^j\s^L \arr g)}_{L^p(\R^n)}
&\leq C(j,p) \doublebar{\arr g}_{L^p(\R^n)}
,\\
\doublebar{\widetilde N_*(\nabla^{m-j}\partial_t^j\D^{\mat A} \arr \varphi)}_{L^p(\R^n)}
&\leq C(j,p) \doublebar{\arr \varphi}_{\dot W\!A_{m-1}^{1,p}(\R^n)}
.\end{align}
\end{thm}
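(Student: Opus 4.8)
The plan is to reduce every estimate in the theorem to Lemma~\ref{lem:N:+}, applied with $\arr u$ equal to the relevant gradient of the potential, with $\Phi$ equal to $\abs{\arr g}$, $\abs{\arr h}$, $\abs{\nabla_\pureH\arr\varphi}$ or $\abs{\arr f}$, and with a near/far (Calder\'on--Zygmund style) splitting of the input adapted to each cube $Q$. The hypothesis $\widetilde N_*\arr u\in L^2(\R^n)$ of Lemma~\ref{lem:N:+} is supplied by the $p=2$ cases of the known bounds \eqref{eqn:S:N:2}, \eqref{eqn:S:N:rough:2}, \eqref{eqn:D:N:2}, \eqref{eqn:D:N:rough:2}, together with the pointwise inequality $\widetilde N_*(\nabla^{m-1}\partial_t H)\le\widetilde N_*(\nabla^m H)$ needed for the $j=1$ cases of $\s^L$ and $\D^{\mat A}$. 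Once the two local hypotheses of Lemma~\ref{lem:N:+} are verified for every cube $Q$ (with $\ell=\ell(Q)/4$) and every $p_2$ with $2<p_2<p_{j,L}^+$, that lemma yields $\doublebar{\widetilde N_*\arr u}_{L^p(\R^n)}\le C_p\doublebar{\Phi}_{L^p(\R^n)}$ for all $2<p<p_2$; letting $p_2\uparrow p_{j,L}^+$ gives the full stated range, and since $\doublebar{\Phi}_{L^p(\R^n)}$ is precisely the norm on the right-hand side of the desired bound, this is the assertion of Theorem~\ref{thm:potentials:N:+}. (Extension to general inputs by density is carried out separately in Theorem~\ref{thm:potentials}.)

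For the single layer potentials the splitting is transparent: given $Q$, write $\arr g=\arr g_1+\arr g_2$ with $\arr g_1=\1_{12Q}\arr g$ (and likewise for $\arr h$), let $\arr u_Q$ be the corresponding potential of $\arr g_1$, and let $\arr u-\arr u_Q$ be that of $\arr g_2$. The near estimate $\doublebar{\widetilde N_n^\ell\arr u_Q}_{L^2(8Q)}\le\doublebar{\widetilde N_*\arr u_Q}_{L^2(\R^n)}\le C\doublebar{\arr g}_{L^2(12Q)}\le C\doublebar{\Phi}_{L^2(16Q)}$ is immediate from \eqref{eqn:S:N:2} or \eqref{eqn:S:N:rough:2}. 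Since $\arr g_2$ (respectively $\arr h_2$) vanishes on $12Q$, the defining identity \eqref{dfn:S} (respectively \eqref{eqn:S:S:vertical}, \eqref{eqn:S:S:horizontal}) shows that $v:=\s^L\arr g_2$ (respectively $\s^L_\nabla\arr h_2$) satisfies $Lv=0$ in $12Q\times\R\supset 10Q\times(-3\ell,3\ell)$ and lies in $\dot W^{m,2}_{loc}$ there. Hence Lemma~\ref{lem:N:solution} applies to $\arr u-\arr u_Q$, which in every case has the form $\nabla^{m-j}w$ for a null solution $w$ of $L$ in $10Q\times(-3\ell,3\ell)$ (namely $w=v$, or $w=\partial_t v$ in the cases carrying a $\partial_t$, the latter again a solution because $\mat A$ is $t$-independent), with $j\in\{0,1\}$ the index for which $p_2<p_{j,L}^+$. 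Bounding the right-hand side $\bigl(\fint_{10Q}\widetilde N_n^{3\ell}(\arr u-\arr u_Q)^2\bigr)^{1/2}$ furnished by Lemma~\ref{lem:N:solution} by $\widetilde N_n^{3\ell}\arr u\le\widetilde N_*\arr u$ on $10Q\subset 15Q$, together with the near estimate applied to $\widetilde N_n^{3\ell}\arr u_Q$, produces exactly the second hypothesis of Lemma~\ref{lem:N:+}, with the factor $\abs{Q}^{1/p_2-1/2}$.

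The double layer potential is the delicate case, and I expect it to be the main obstacle. Its input lies in a Whitney--Lebesgue or Whitney--Sobolev space, and cutting off the underlying function $F$ (or $\phi$) by a cutoff $\eta$ adapted to $16Q$ produces, in $\nabla^{m-1}(\eta F)$, cross terms $\nabla^k\eta\otimes\nabla^{m-1-k}F$ with $k\ge1$ involving lower-order derivatives of $F$ that are not controlled by the homogeneous norm. To fix this I would first subtract from $F$ an averaged Taylor polynomial $P$ over $16Q$ --- of degree $\le m-2$ in the Whitney--Lebesgue case, so that $\nabla^{m-1}(F-P)=\nabla^{m-1}F$ and hence $\Tr_{m-1}(F-P)=\arr f$, and of degree $\le m-1$ in the Whitney--Sobolev case, where the resulting discrepancy is a constant array that is annihilated by $\nabla^{m-j}\partial_t^j\D^{\mat A}$ modulo the additive polynomial normalization of $\D^{\mat A}$ --- and then set $\arr f_Q:=\Tr_{m-1}(\eta(F-P))$, which is compactly supported. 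Iterated Poincar\'e inequalities on $16Q$ give $\doublebar{\nabla^{m-1-k}(F-P)}_{L^2(16Q)}\le C\ell(Q)^k\doublebar{\nabla^{m-1}F}_{L^2(16Q)}$ for $0\le k\le m-1$, whence $\doublebar{\arr f_Q}_{\dot W\!A^{0,2}_{m-1}}\le C\doublebar{\arr f}_{L^2(16Q)}=C\doublebar{\Phi}_{L^2(16Q)}$ (and analogously in the Whitney--Sobolev case with $\Phi=\abs{\nabla_\pureH\arr\varphi}$), so the near estimate follows from \eqref{eqn:D:N:2} or \eqref{eqn:D:N:rough:2}. Because $(1-\eta)(F-P)$ vanishes on a neighborhood of $\overline{12Q}$ in $\R^n$, the double layer potential of $\arr f-\arr f_Q$ has no Dirichlet jump there and is a null solution of $L$ in $13Q\times\R\supset10Q\times(-3\ell,3\ell)$, so Lemma~\ref{lem:N:solution} then applies exactly as in the single layer case. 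The bookkeeping in this polynomial-subtraction-and-cutoff step, and the verification that the localized potentials are genuinely solutions across $\R^n$ (via the jump relations and the normalization freedom in $\D^{\mat A}$), is the one place where care is needed; everything else is a direct application of Lemmas~\ref{lem:N:far}, \ref{lem:N:+} and~\ref{lem:N:solution} together with the $p=2$ theory.
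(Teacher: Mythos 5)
Your proposal is correct and follows essentially the same route as the paper: split the input into a near part supported in a fixed dilate of $Q$ (with the averaged-Taylor-polynomial correction for the double layer inputs) and a far part whose potential is a null solution of $L$ across $10Q\times\{0\}$, then feed the $p=2$ bounds and Lemma~\ref{lem:N:solution} into Lemma~\ref{lem:N:+}. The only cosmetic difference is that in the Whitney--Sobolev case the paper adds the degree-$(m-1)$ polynomial back into $\arr\varphi_{10Q}$ so that $\arr\varphi-\arr\varphi_{10Q}$ vanishes on $10Q$ outright, whereas you leave a constant-array discrepancy and observe (correctly) that it is annihilated by $\nabla^{m-j}\partial_t^j\D^{\mat A}$.
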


We may as usual extend layer potentials by density in such a way that the given bounds are valid for all inputs such that the respective right hand sides are finite.

\begin{proof}[Proof of Theorem~\ref{thm:potentials:N:+}]
We make one of the following six choices of $u$, $u_Q$, $\Phi_1$, and~$j$, where $Q$ is any cube in $\R^n$.
\begin{align*}
u&=\s^L_\nabla \arr h,& u_Q&=\s^L_\nabla(\1_{16Q}\arr h), & \Phi_1&=\abs{\arr h}
, & j&=1
,\\
u&=\D^{\mat A}\arr f,& u_Q &= \D^{\mat A}(\arr f_{10Q}), &  \Phi_1&=\abs{\arr f}
, & j&=1
,\\
u&=\partial_t^j\s^L \arr g, & u_Q&=\partial_t^j\s^L(\1_{16Q}\arr g), &  \Phi_1&=\abs{\arr g}
, & j&\in \{0,1\}
,\\
u&=\partial_t^j\D^{\mat A}\arr \varphi, & u_Q &= \partial_t^j\D^{\mat A}(\arr \varphi_{10Q}), &  \Phi_1&=\abs{\nabla\arr \varphi}
, & j&\in \{0,1\}
.\end{align*}
Here, $\arr\varphi_{10Q}=\Tr_{m-1}((\phi-P_{11Q})\eta_{10Q}+P_{11Q})$, where $\eta_{10Q}$ is smooth, supported in $11Q$ and identically equal to 1 in~$10Q$, and where $P_{11Q}$ is an appropriate polynomial of degree at most~$m-1$. Then $\arr\varphi_{10Q}=\arr\varphi$ in~$10Q$ and $\nabla\arr\varphi_{10Q}=0$ outside~$11Q$.
As is standard (see, for example, \cite[Lemma~3.3]{She06B} or \cite[Definition~5.9]{BarHM18p}), if we choose $P_{11Q}$ correctly, then by the Poincar\'e inequality, $\doublebar{\nabla\arr \varphi_{10Q}}_{L^q(\R^n)}=\doublebar{\nabla\arr \varphi_{10Q}}_{L^q(11Q)}\leq C_q\doublebar{\nabla\arr \varphi}_{L^q(11Q)}$ for any $q$ with $1\leq q\leq \infty$.

We construct $\arr f_{10Q}$ similarly, but with a polynomial of degree at most~$m-2$, so that
$\arr f_{10Q}=\arr f$ in~$10Q$, $\arr f_{10Q}=0$ outside~$11Q$, and $\doublebar{\arr f_{10Q}}_{L^q(\R^n)}=\doublebar{\arr f_{10Q}}_{L^q(11Q)}\leq C_q\doublebar{\arr f}_{L^q(11Q)}$ for any $1\leq q\leq \infty$.

(In this section, we need only require that $\doublebar{\arr f_{10Q}}_{L^q(\R^n)}\leq C_q\doublebar{\arr f}_{L^q(16Q)}$; we will need the tighter bound $\doublebar{\arr f_{10Q}}_{L^q(\R^n)}\leq C_q\doublebar{\arr f}_{L^q(11Q)}$ in Section~\ref{sec:N:neumann}.)

Choose some $p<p_{j,L}^+$. By standard self-improvement properties of reverse H\"older estimates (see, for example, \cite[Chapter~V, Theorem~1.2]{Gia83}), there is a $p_2>p$ such that the bound~\eqref{eqn:Meyers} is valid for $p=p_2$ (that is, such that $p_2<p_{j,L}^+$), with $p_2$ and $c(j,L,p_2,2)$ depending only on $p$ and $c(j,L,p,2)$.

In all cases, $u-u_Q\in\dot W^{m,2}_{loc}(10Q\times(-\ell(Q),\ell(Q)))$, and $L(u-u_Q)=0$ in $10Q\times (-\ell(Q),\ell(Q))$.
By Lemma~\ref{lem:N:solution} with $v=u-u_Q$, we have that
\begin{align*}
\biggl(\fint_{8Q} \widetilde N^\ell_n(\nabla^{m-j}(u-u_Q))^{p_2}\biggr)^{1/{p_2}}
&\leq
	C(j,p_2)\biggl(\fint_{10Q}\widetilde N_n^{3\ell} (\nabla^{m-j}(u-u_Q))^2\biggr)^{1/2}
.\end{align*}
Let $\arr u=\nabla^{m-j} u$ and let $\arr u_Q=\nabla^{m-j} u_Q$.
By
the bounds~\cref{eqn:S:N:2,eqn:D:N:2,eqn:S:N:rough:2,eqn:D:N:rough:2} established in \cite{BarHM18p}, we have that
\begin{equation*}\doublebar{\widetilde N_*(\arr u)}_{L^2(\R^n)}\leq C\doublebar{\Phi_1}_{L^2(\R^n)}<\infty,\qquad\doublebar{\widetilde N_*(\arr u_Q)}_{L^2(\R^n)}\leq C\doublebar{\Phi_1}_{L^2(16Q)}\end{equation*}
and so
\begin{align*}
\doublebar{\widetilde N^\ell_n(\arr u-\arr u_Q)}_{L^{p_2}(8Q)}
&\leq
	\frac{C(j,p_2)}{\abs{Q}^{1/2-1/p_2}}
	\doublebar{\widetilde N_n^{3\ell} (\arr u)}_{L^2(10Q)}
	+
	\frac{C(j,p_2)}{\abs{Q}^{1/2-1/p_2}} \doublebar{\Phi_1}_{L^2(16Q)}
.\end{align*}
The conditions of Lemma~\ref{lem:N:+} are thus satisfied, and so
\begin{equation*}\doublebar{\widetilde N_+(\nabla^{m-j} u)}_{L^p(\R^n)}
\leq C(j,p) \doublebar{\Phi_1}_{L^p(\R^n)},\end{equation*}
as desired.
\end{proof}

\section{Nontangential estimates for the Neumann problem}
\label{sec:N:neumann}

In this section we will prove part of Theorem~\ref{thm:neumann:p:rough}. Our precise result is Theorem~\ref{thm:neumann:N}, stated below. However, we will need extensive preliminaries before Theorem~\ref{thm:neumann:N} can be proven.

Our analysis will involve the interplay between the Dirichlet and Neumann boundary values of solutions. Thus, we will want a straightforward estimate on the Dirichlet boundary values of various solutions.
The following lemma is much weaker than traditional Fatou-type theorems, as the existence of boundary traces is assumed, not proven; however, the lemma is straightforward to establish and will be useful in our proof of Theorem~\ref{thm:neumann:p:rough}.

\begin{lem}\label{lem:trace:N}
Let $\gamma$ be a (possibly zero) multiindex in $(\N_0)^n$.
Let $\arr u$ be defined and locally integrable in $\R^\dmn_+$. Suppose that the weak derivative $\partial^\gamma u$ exists in $\R^\dmn_+$ and that $\widetilde N_+(\partial^\gamma\arr u)\in L^p(\R^n)$ for some $p$ with $1<p\leq\infty$.
Suppose that $\Trace^+ \arr u$ exists in the sense of formula~\eqref{eqn:trace}; that is, there is an array of functions $\Trace^+\arr u$ such that
\begin{equation*}\lim_{t\to 0^+} \int_K \abs{\arr u(x,t)-\Trace^+\arr u(x)}\,dx=0\end{equation*}
for any compact set $K\subset\R^n$.
Then $\partial^\gamma\Trace^+ \arr u$ exists in the weak sense and satisfies
\begin{equation*}\doublebar{\partial^\gamma\Trace^+\arr u}_{L^p(\R^n)}\leq \doublebar{\widetilde N_+(\partial^\gamma\arr u)}_{L^p(\R^n)}.\end{equation*}
\end{lem}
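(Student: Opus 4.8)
The plan is to reduce the $L^p$ bound on $\partial^\gamma \Trace^+ \arr u$ to the (already quantitative) definition of the modified nontangential maximal function, via a standard smoothing-and-averaging argument. First I would observe that it suffices to prove, for every $\varphi\in C^\infty_0(\R^n)$ with $\doublebar{\varphi}_{L^{p'}(\R^n)}\leq 1$ (where $1/p+1/p'=1$, interpreting $p'=1$ if $p=\infty$), the bound
\begin{equation*}
\abs[bigg]{\int_{\R^n} \Trace^+\arr u(x)\,\partial^\gamma\varphi(x)\,dx}
\leq \doublebar{\widetilde N_+(\partial^\gamma\arr u)}_{L^p(\R^n)};
\end{equation*}
once this is known, the left-hand side defines a bounded linear functional of $\varphi$ of the right norm, so $\partial^\gamma\Trace^+\arr u$ exists in the weak (distributional) sense and obeys the claimed estimate. (The case $p=\infty$, i.e.\ $p'=1$, is handled by the same computation.)

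To prove the displayed inequality, I would use the hypothesis that $\arr u(\,\cdot\,,t)\to \Trace^+\arr u$ in $L^1_{loc}(\R^n)$ as $t\to 0^+$: since $\varphi$ is compactly supported and smooth,
\begin{equation*}
\int_{\R^n}\Trace^+\arr u(x)\,\partial^\gamma\varphi(x)\,dx
= \lim_{t\to 0^+}\int_{\R^n}\arr u(x,t)\,\partial^\gamma\varphi(x)\,dx
= \lim_{t\to 0^+}(-1)^{\abs\gamma}\int_{\R^n}\partial^\gamma\arr u(x,t)\,\varphi(x)\,dx,
\end{equation*}
where in the last step I integrate by parts $\abs\gamma$ times in the horizontal variables (legitimate since $\partial^\gamma u$ exists weakly in $\R^\dmn_+$ and $\varphi$ is smooth and compactly supported — one first integrates by parts on a slice $\R^n\times\{t\}$, which is justified by mollifying $\arr u$ in the horizontal variables and passing to the limit, using local integrability of $\partial^\gamma\arr u$ on slices, which holds for a.e.\ $t$). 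It then remains to bound $\limsup_{t\to 0^+}\abs{\int \partial^\gamma\arr u(x,t)\,\varphi(x)\,dx}$ by $\doublebar{\widetilde N_+(\partial^\gamma\arr u)}_{L^p}$.

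For the pointwise control I would fix $t>0$ and, for each $x\in\R^n$, write $\partial^\gamma\arr u(x,t)$ as an average over a Whitney-type ball and compare with $\widetilde N_+$: for a.e.\ $x$,
\begin{equation*}
\text{(average of $\partial^\gamma\arr u$ over $B((x,t),t/2)$)} \leq \widetilde N_+(\partial^\gamma\arr u)(y)\quad\text{whenever }\abs{x-y}<t,
\end{equation*}
directly from the definition~\eqref{dfn:NTM:modified:+}, so that in particular $\bigl(\fint_{B((x,t),t/2)}\abs{\partial^\gamma\arr u}^2\bigr)^{1/2}\leq \widetilde N_+(\partial^\gamma\arr u)(x)$. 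Replacing $\arr u(\,\cdot\,,t)$ by its vertical average $\arr u_t(x)=\fint_{t/2}^{3t/2}\arr u(x,s)\,ds$ (which also converges to $\Trace^+\arr u$ in $L^1_{loc}$ and whose horizontal mollification is controlled by $\widetilde N_+$ through Cauchy–Schwarz over the Whitney ball), I would get $\abs{\partial^\gamma\arr u_t(x)}\lesssim$ an average of $\widetilde N_+(\partial^\gamma\arr u)$ over a ball of radius $\sim t$ centered at $x$ — or, more cleanly, I would integrate against $\varphi$ first and then use Fubini plus the slice-wise bound to conclude $\abs{\int\partial^\gamma\arr u_t\,\varphi}\leq \doublebar{\widetilde N_+(\partial^\gamma\arr u)}_{L^p}\doublebar{\varphi}_{L^{p'}}$ uniformly in $t$, then let $t\to 0^+$. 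The one technical point that needs care — the main (though minor) obstacle — is justifying the slice-wise integration by parts and the slice-wise convergence $\arr u(\,\cdot\,,t)\to\Trace^+\arr u$ simultaneously with replacing values on a single slice by a vertical average, so that the $L^2$ Whitney-ball average appearing in $\widetilde N_+$ can actually be brought to bear; this is handled by a routine mollification/Fubini argument using only local integrability of $\arr u$ and $\partial^\gamma\arr u$ in $\R^\dmn_+$ together with the definition~\eqref{eqn:trace} of $\Trace^+$.
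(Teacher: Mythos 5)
Your proposal is correct and follows essentially the same route as the paper's proof: both test against $\partial^\gamma\varphi$, use the $L^1_{loc}$ convergence defining $\Trace^+\arr u$ to move the derivative onto $\arr u$, and then replace slice (or vertical-segment) values of $\partial^\gamma\arr u$ by full Whitney-ball averages at the cost of an error involving $\varphi(y)-\varphi(x)$ of size $O(t\doublebar{\nabla\varphi}_{L^\infty})$ that vanishes as $t\to 0^+$, so that Cauchy--Schwarz and the definition of $\widetilde N_+$ give the constant-one bound before applying duality. The ``routine mollification/Fubini argument'' you defer to is exactly what the paper carries out explicitly, via the weight $e_t(s)/w(t)$ built from the cross-sections $E_t(y,s)$ of the Whitney balls and the symmetry $x\in E_t(y,s)\iff y\in E_t(x,s)$.
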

\begin{proof}
Let $\arr\eta$ be smooth and supported in a compact set $K\subset\R^n$. By the weak definition of derivative, we seek to bound $\langle \partial^\gamma \arr \eta,\Trace^+ \arr u\rangle_{\R^n}$.

Let $W(y,t)=B((y,t),t/2)$ be the Whitney balls in the definition of~$\widetilde N_+$. For each $y\in\R^n$ and each $s$, $t\in \R$, let $E_t(y,s)$ be the horizontal cross section of $W(y,t)$ given by $E_t(y,s)=\{x\in\R^n:(x,s)\in W(y,t)\}$. Observe that $E_t(y,s)=\emptyset$ if $s\geq 3t/2$ or $s\leq t/2$, and that $E_t(y,s)$ is a disk in $\R^n$ centered at~$y$ if $t/2<s<3t/2$, with radius depending only on $t$ and~$s$, not on~$y$.

Let $e_t(s)=\abs{E_t(y,s)}$ and $w(t)=\abs{W(y,t)}$, where $e_t(s)$ is defined in terms of $n$-dimensional Lebesgue measure and $w(t)$ is defined in terms of $\dmn$-dimensional Lebesgue measure. Observe that the right hand sides are independent of $y$ provided $y\in\R^n$. Furthermore, $\int_\R e_t(s)\,ds = \int_{t/2}^{3t/2} e_t(s)\,ds=w(t)$. Thus, for any $t>0$,
\begin{align*}
\langle \partial^\gamma\arr\eta,\Trace^+\arr u\rangle_{\R^n}
&=\int_K \langle \partial^\gamma\arr\eta,\Trace^+\arr u\rangle
=\int_K \langle \partial^\gamma\arr\eta(y),\Trace^+\arr u(y)\rangle \int_{t/2}^{3t/2}\frac{e_t(s)}{w(t)}\,ds\,dy
.\end{align*}
Changing the order of integration and adding and subtracting appropriate values of $\arr u$, we see that
\begin{align*}
\langle \partial^\gamma\arr\eta,\Trace^+\arr u\rangle_{\R^n}
&=
	\int_{t/2}^{3t/2}\frac{e_t(s)}{w(t)}\int_K \langle \partial^\gamma\arr\eta(y),\Trace^+\arr u(y)-\arr u(y,s)\rangle \,dy\,ds
	\\&\qquad+
	\int_{t/2}^{3t/2}\frac{e_t(s)}{w(t)}\int_K \langle \partial^\gamma\arr\eta(y), \arr u(y,s)\rangle \,dy\,ds
.\end{align*}
Because $\frac{e_t}{w(t)}$ is nonnegative and integrates to~$1$, the first integral is at most
\begin{equation*}\sup_{s<3t/2} \doublebar{\partial^\gamma\arr\eta}_{L^\infty(\R^n)} \int_{K} \abs{\Trace^+ \arr u(y)-\arr u(y,s)}\,dy
\end{equation*}
which by definition of $\Trace^+$ converges to zero as $t\to 0^+$.
Thus,
\begin{align*}
\langle \partial^\gamma\arr\eta,\Trace^+\arr u\rangle_{\R^n}
&=
	\lim_{t\to 0^+}
	(-1)^{\abs\gamma}\int_{t/2}^{3t/2}\frac{e_t(s)}{w(t)}\int_K \langle \arr\eta(y), \partial^\gamma\arr u(y,s)\rangle \,dy\,ds
.\end{align*}
Recall that $e_t(s)=\abs{E_t(y,s)}$. Thus,
\begin{align*}
\langle \arr\eta,\partial^\gamma\Trace^+\arr u\rangle_{\R^n}
&=
	\lim_{t\to 0^+}
	(-1)^{\abs\gamma} \int_{t/2}^{3t/2}\frac{1}{w(t)}\int_{E_t(y,s)}\,dx\int_{\R^n} \langle \arr\eta(y), \partial^\gamma\arr u(y,s)\rangle \,dy\,ds
.\end{align*}
Observe that $x\in E_t(y,s)$ if and only if $y\in E_t(x,s)$.
Changing the order of integration and recalling the definitions of $E_t$ and $w(t)$, we see that
\begin{align*}
\langle \partial^\gamma\arr\eta,\Trace^+\arr u\rangle_{\R^n}
&=
	\lim_{t\to 0^+}(-1)^{\abs\gamma}\int_{\R^n}
	\fint_{W(x,t)}\langle \arr\eta(y), \partial^\gamma\arr u(y,s)\rangle \,dy\,ds\,dx
.\end{align*}
Let $K_t=\{x\in\R^n: \arr\eta(y)\neq \arr 0 \text{ for some }(y,s)\in W(x,t)\}$. Because $\arr\eta$ is compactly supported, $\cup_{0<t<1}K_t$ is bounded.
Adding and subtracting $\arr \eta(x)$, we have that
\begin{align*}
\langle \partial^\gamma\arr\eta,\Trace^+\arr u\rangle_{\R^n}
&=
	\lim_{t\to 0^+}\biggl((-1)^{\abs\gamma}\int_{K_t}
	\fint_{W(x,t)}\langle \arr\eta(y)-\arr\eta(x), \partial^\gamma\arr u(y,s)\rangle \,dy\,ds\,dx
	\\&\qquad\phantom{\lim_{t\to0^+}}+(-1)^{\abs\gamma}\int_{K}
	\fint_{W(x,t)}\langle \arr\eta(x), \partial^\gamma\arr u(y,s)\rangle \,dy\,ds\,dx\biggr)
.\end{align*}
If $y\in W(x,t)$ then $\abs{\arr\eta(y)-\arr \eta(x)} \leq \frac{1}{2}t\doublebar{\nabla\arr\eta}_{L^\infty(\R^n)}$. Furthermore, $\fint_{W(x,t)} \abs{\partial^\gamma \arr u} \leq \widetilde N_+(\partial^\gamma \arr u)(x)$.
 Thus, the first integral is at most
\begin{equation*}
\int_{K_t}
	\frac{1}{2}t\doublebar{\nabla\arr\eta}_{L^\infty(\R^n)}
	\widetilde N_+(\partial^\gamma \arr u)(x)
	\,dx
\end{equation*}
which converges to zero as $t\to 0^+$. Thus,
\begin{align*}
\langle \partial^\gamma\arr\eta,\Trace^+\arr u\rangle_{\R^n}
&=
	\lim_{t\to 0^+}(-1)^{\abs\gamma}\int_{K}
	\fint_{W(x,t)}\langle \arr\eta(x), \partial^\gamma\arr u(y,s)\rangle \,dy\,ds\,dx
\end{align*}
and so
\begin{align*}
\abs{\langle \partial^\gamma\arr\eta,\Trace^+\arr u\rangle_{\R^n} }
&\leq
	\int_{\R^n} \abs{\arr \eta(x)} \widetilde N_+(\partial^\gamma \arr u)(x)\,dx
\end{align*}
for all smooth, compactly supported functions~$\arr\eta$.
By density of such functions, $\arr\eta\mapsto \langle \partial^\gamma\arr\eta,\Trace^+\arr u\rangle_{\R^n}$ extends to a bounded linear operator on all of~$L^{p'}(\R^n)$; thus, $\partial^\gamma\Trace^+\arr u$ exists in the weak sense and lies in $L^p(\R^n)$, as desired.
\end{proof}

We now recall a few properties of layer potentials. Specifically, the jump and continuity relations
\begin{align}
\label{eqn:D:jump}
\Tr_{m-1}^+\D^{\mat A}\arr f-\Tr_{m-1}^-\D^{\mat A} \arr f&=-\arr f
,\\
\label{eqn:D:cts}\M_{\mat A}^+\D^{\mat A}\arr f+\M_{\mat A}^-\D^{\mat A} \arr f&\owns\arr 0
,\\
\label{eqn:S:cts}
\Tr_{m-1}^+\s^L\arr g-\Tr_{m-1}^-\s^L \arr g&=\arr 0
,\\
\label{eqn:S:jump}\M_{\mat A}^+\s^L\arr g+\M_{\mat A}^-\s^L \arr g&\owns \arr g
\end{align}
are valid for all $\arr f\in \dot W\!A^{1/2,2}_{m-1}(\R^n)$ and all $\arr g\in \dot B^{-1/2,2}_2(\R^n)$.
See \cite{Bar17}; these relations are also clear from the definitions~\eqref{dfn:D:newton:+} and~\eqref{dfn:S} of $\D^{\mat A}$ and~$\s^L$ and formula~\eqref{eqn:Neumann:intro} for Neumann boundary values. (As noted in Section~\ref{sec:dfn:L}, if $Lu=0$ in $\R^\dmn_\pm$ and $u\in\dot W^{m,2}(\R^\dmn_\pm)$, then by \cite[Lemma~2.4]{BarHM17pB} we have that formula~\eqref{eqn:Neumann:intro} for $\M_{\mat A}^\pm u$ is valid for all $\varphi\in \dot W^{m,2}(\R^\dmn_\pm)$.)

Recall that we extend $\D^{\mat A}$ and $\s^L$ by density. If $\mat A$ is as in Theorem~\ref{thm:potentials:N:+}, then by Lemma~\ref{lem:trace:N} we have that the jump relation~\eqref{eqn:D:jump} is valid for all $\arr f\in \dot W\!A^{0,p}_{m-1}(\R^n)$, $2\leq p<p_{1,L}^+$, or for all $\arr f\in \dot W\!A^{1,p}_{m-1}(\R^n)$, $2\leq p<p_{0,L}^+$. By the bounds~\cref{eqn:D:lusin:2,eqn:D:N:2,eqn:D:lusin:rough:2} and \cite[Theorems~6.1 and~6.2]{BarHM17pB}, we have that the continuity relation~\eqref{eqn:D:cts} is valid for all $\arr f \in\dot W\!A^{0,2}_{m-1}(\R^n)$ and all $\arr f \in\dot W\!A^{1,2}_{m-1}(\R^n)$. Similarly, the jump and continuity relations~\eqref{eqn:S:cts} and~\eqref{eqn:S:jump} are valid for all $\arr g\in L^2(\R^n)$ and all $\arr g\in \dot W^{-1,2}(\R^n)$.

We now establish a compatibility  result of the type discussed in \cite{Axe10}.

\begin{lem}\label{lem:compatible}
Let $L$ be an operator of the form~\eqref{eqn:weak} of order~$2m$ associated to bounded self-adjoint $t$-independent coefficients~$\mat A$ that satisfy the ellipticity condition~\eqref{eqn:elliptic:slices}.
Let $\arr g\in L^2(\R^n)\cap \dot W^{-1,2}(\R^n)$.
Then there is a function $v$, unique up to adding polynomials of degree at most $m-2$, that satisfies
\begin{equation}
\label{eqn:neumann:both:2}
\left\{\begin{gathered}\begin{aligned}
Lv&=0 \text{ in }\R^\dmn_+
,\\
\M_{\mat A}^+ v &\owns \arr g,
\end{aligned}\\\begin{aligned}
\doublebar{\mathcal{A}_2^+(t\nabla^m v)}_{L^2(\R^n)}
+
\doublebar{\widetilde N_+(\nabla^{m-1} v)}_{L^2(\R^n)}
&\leq C\doublebar{\arr g}_{\dot W^{-1,2}(\R^\dmnMinusOne)}
,\\
\doublebar{\mathcal{A}_2^+(t\nabla^m \partial_t v)}_{L^2(\R^n)}
+
\doublebar{\widetilde N_+(\nabla^{m} v)}_{L^2(\R^n)}
&\leq C\doublebar{\arr g}_{L^2(\R^\dmnMinusOne)}
.\end{aligned}\end{gathered}\right.\end{equation}
Furthermore, $v$ is also the solution to the problem~\eqref{eqn:neumann:rough:2}, and if $w$ is the solution to the problem~\eqref{eqn:neumann:regular:2}, then $w=v+P$ for some polynomial $P$ of degree $m-1$.
\end{lem}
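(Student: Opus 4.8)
The plan is to derive the lemma from the two well posedness results of \cite{BarHM18} by showing that the $L^2$-Neumann solution $w$ of~\eqref{eqn:neumann:regular:2} and the $\dot W^{-1,2}$-Neumann solution $v$ of~\eqref{eqn:neumann:rough:2} --- both of which exist, as $\arr g$ lies in $L^2(\R^n)$ and in $\dot W^{-1,2}(\R^n)$, and both of which moreover satisfy the nontangential bounds~\eqref{eqn:neumann:N:2} of \cite{BarHM18p} --- differ by a polynomial of degree at most $m-1$. Granting this, $\nabla^m v=\nabla^m w$ and $\nabla^m\partial_t v=\nabla^m\partial_t w$, so the $L^2$-type estimates of~\eqref{eqn:neumann:both:2} hold for $v$ with constant $\doublebar{\arr g}_{L^2(\R^n)}$, the $\dot W^{-1,2}$-type estimates are the ones $v$ already satisfies, and with $Lv=0$ and $\M_{\mat A}^+ v\owns\arr g$ this is~\eqref{eqn:neumann:both:2}. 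A function satisfying~\eqref{eqn:neumann:both:2} in particular solves~\eqref{eqn:neumann:rough:2}, so its uniqueness up to polynomials of degree $m-2$ is inherited from \cite{BarHM18}; and $w=v+P$ with $\deg P\le m-1$ is exactly what we will have proven.

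To see that $w$ and $v$ differ by a polynomial, I would first handle ``nice'' data. For $\arr\psi\in\mathfrak D$ we have $\arr\psi\in\dot W\!A^{0,2}_{m-1}(\R^n)\cap\dot W\!A^{1,2}_{m-1}(\R^n)$, so by the $p=2$ cases of~\eqref{eqn:D:lusin:2}, \eqref{eqn:D:lusin:rough:2}, \eqref{eqn:D:N:2} and~\eqref{eqn:D:N:rough:2} the function $\D^{\mat A}\arr\psi$ satisfies $\mathcal{A}_2^+(t\nabla^m\D^{\mat A}\arr\psi)+\mathcal{A}_2^+(t\nabla^m\partial_t\D^{\mat A}\arr\psi)+\widetilde N_+(\nabla^m\D^{\mat A}\arr\psi)\in L^2(\R^n)$. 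By the compatibility results of \cite{BarHM17pB} quoted in Section~\ref{sec:dfn:L}, its Neumann datum $\M_{\mat A}^+\D^{\mat A}\arr\psi$ is then a single class of arrays in $L^2(\R^n)\cap\dot W^{-1,2}(\R^n)$ that, as a functional, is bounded on $\dot W\!A^{0,2}_{m-1}(\R^n)$ and on $\dot W\!A^{1,2}_{m-1}(\R^n)$ and agrees on $\mathfrak D$ with the boundary values~\eqref{eqn:Neumann:intro}. Applying the inverses of the two isomorphisms of \cite{BarHM18} to the two realizations of this functional and using their injectivity, we get that for data equal to $\M_{\mat A}^+\D^{\mat A}\arr\psi$ the $L^2$-Neumann and the $\dot W^{-1,2}$-Neumann solutions both equal $\D^{\mat A}\arr\psi$; hence that function satisfies all of~\eqref{eqn:neumann:both:2}, with $L^2$-constant $C\doublebar{\M_{\mat A}^+\D^{\mat A}\arr\psi}_{L^2(\R^n)}$ and $\dot W^{-1,2}$-constant $C\doublebar{\M_{\mat A}^+\D^{\mat A}\arr\psi}_{\dot W^{-1,2}(\R^n)}$ (the isomorphism bounds of \cite{BarHM18} converting $\doublebar{\arr\psi}_{\dot W\!A^{k,2}_{m-1}(\R^n)}$, $k\in\{0,1\}$, into these norms).

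I would then pass to the limit. Granting that $\{\M_{\mat A}^+\D^{\mat A}\arr\psi:\arr\psi\in\mathfrak D\}$ is dense in $L^2(\R^n)\cap\dot W^{-1,2}(\R^n)$ for the norm $\doublebar{\cdot}_{L^2(\R^n)}+\doublebar{\cdot}_{\dot W^{-1,2}(\R^n)}$, pick $\arr g_k=\M_{\mat A}^+\D^{\mat A}\arr\psi_k$ with $\arr g_k\to\arr g$ in both norms and set $v_k=\D^{\mat A}\arr\psi_k$. By the previous paragraph and linearity, $v_k-v_j$ obeys the $\dot W^{-1,2}$-type bounds of~\eqref{eqn:neumann:both:2} with right-hand side $C\doublebar{\arr g_k-\arr g_j}_{\dot W^{-1,2}(\R^n)}$ and the $L^2$-type bounds with right-hand side $C\doublebar{\arr g_k-\arr g_j}_{L^2(\R^n)}$, so $(v_k)$ is Cauchy for both families of seminorms; by well posedness of~\eqref{eqn:neumann:rough:2} its limit, modulo polynomials of degree $m-2$, is the solution $v$ of~\eqref{eqn:neumann:rough:2} with data $\arr g$, and lower semicontinuity of the seminorms gives~\eqref{eqn:neumann:both:2} for $v$ with the stated constants. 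Using Lemma~\ref{lem:slices} to pass from $\widetilde N_+(\nabla^m v)\in L^2(\R^n)$ to the slice bound, $v$ solves~\eqref{eqn:neumann:regular:2}, so $w=v+P$ with $\deg P\le m-1$ by the uniqueness part of \cite{BarHM18}.

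The hard part will be the density claim invoked at the start of the previous paragraph. The isomorphism statements of \cite{BarHM18} only give density of $\{\M_{\mat A}^+\D^{\mat A}\arr\psi:\arr\psi\in\mathfrak D\}$ in the quotient duals $(\dot W\!A^{0,2}_{m-1}(\R^n))^*$ and $(\dot W\!A^{1,2}_{m-1}(\R^n))^*$ taken separately, whereas density is needed in the joint topology of $L^2(\R^n)\cap\dot W^{-1,2}(\R^n)$. By Hahn--Banach this reduces to showing that any functional in $(L^2(\R^n)\cap\dot W^{-1,2}(\R^n))^*=L^2(\R^n)+\dot W^{1,2}(\R^n)$ annihilating every $\M_{\mat A}^+\D^{\mat A}\arr\psi$ is zero; this is where self-adjointness of $\mat A$ is used, through the symmetry of $\M_{\mat A}^+\D^{\mat A}$ proved in \cite{Bar17,BarHM18} together with the compatibility of the two descriptions of the Neumann operator from \cite{BarHM17pB}, which let one transpose the pairing against $\M_{\mat A}^+\D^{\mat A}\arr\psi$ into a pairing against $\arr\psi$ and then invoke injectivity of the isomorphisms. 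The cases $\dmn=2$ and $\dmn=3$, where $\dot W^{-1,2}(\R^n)$ with $n\in\{1,2\}$ is delicate, will require separate attention. This is the circle of compatibility phenomena considered in \cite{Axe10}.
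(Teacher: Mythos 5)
Your reduction is correct as far as it goes: if $\arr\psi\in\mathfrak D$, then $\D^{\mat A}\arr\psi$ solves both \eqref{eqn:neumann:regular:2} and \eqref{eqn:neumann:rough:2} with datum $\M_{\mat A}^+\D^{\mat A}\arr\psi$, and if such data were dense in $L^2(\R^n)\cap\dot W^{-1,2}(\R^n)$ for the intersection norm, the limiting argument you describe would finish the proof. But the density claim you defer to the end is not a technical loose end --- it \emph{is} the lemma. Density of $\{\M_{\mat A}^+\D^{\mat A}\arr\psi:\arr\psi\in\mathfrak D\}$ in the intersection is equivalent (after composing with the two solution operators) to the compatibility of the two well posedness results, which is exactly what is to be proved; the isomorphism statements of \cite{BarHM18} only give density of this set in $(\dot W\!A^{0,2}_{m-1}(\R^n))^*$ and in $(\dot W\!A^{1,2}_{m-1}(\R^n))^*$ separately, and density in each of two Banach spaces does not imply density in their intersection. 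Your Hahn--Banach sketch does not repair this: an annihilating functional lives in the sum space $L^2(\R^n)+\dot W^{1,2}(\R^n)$ (modulo the Whitney compatibility conditions), and after transposing via self-adjointness you would need injectivity of $\M_{\mat A}^+\D^{\mat A}$ on that sum space --- a statement dual to, and just as inaccessible as, the surjectivity onto the intersection. The examples of \cite{Axe10} show that this kind of compatibility can genuinely fail, so no soft functional-analytic argument can close the gap; some quantitative input specific to the self-adjoint class is required.

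The paper supplies that input by a different mechanism. First, for the particular constant-coefficient operator $\mat A_0$ of \cite[Section~6]{BarHM18}, the solution to \eqref{eqn:neumann:regular:2} is written explicitly via the Fourier transform, and a direct computation shows it also satisfies the $\dot W^{-1,2}$ estimates; this gives well posedness of \eqref{eqn:neumann:both:2} for $\mat A_0$ outright. Second, the abstract layer-potential framework of \cite[Theorems~6.23 and~6.24]{Bar17} is run with the \emph{intersection} spaces $\DD=\dot W\!A^{0,2}_{m-1}\cap\dot W\!A^{1,2}_{m-1}$ and $\NN=(\dot W\!A^{0,2}_{m-1})^*\cap(\dot W\!A^{1,2}_{m-1})^*$, yielding invertibility of $\M_{\mat A_0}^+\D^{\mat A_0}:\DD\to\NN$. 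Third, the method of continuity along $\mat A_s=(1-s)\mat A_0+s\mat A$ transfers this invertibility to $\mat A$: the key point is that whenever \eqref{eqn:neumann:both:2} is well posed for $\mat A_s$, the norm of $\M_s^{-1}$ is controlled by the constants in the two problems \eqref{eqn:neumann:regular:2} and \eqref{eqn:neumann:rough:2}, which are uniform in $s$ because each $\mat A_s$ is self-adjoint and satisfies \eqref{eqn:elliptic:slices}; this uniformity makes the set of good $s$ both open and closed in $[0,1]$. If you want to salvage your approach, you would need to reproduce the content of these three steps inside your density argument, at which point you have essentially rewritten the paper's proof.
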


\begin{proof} We will follow the argument of \cite{BarHM18}, in which well posedness of the problems~\eqref{eqn:neumann:regular:2} and~\eqref{eqn:neumann:rough:2} were established.

We begin with the case where $\mat A=\mat A_0$ is the (constant) coefficient matrix discussed in \cite[Section~6]{BarHM18}. In this case the solution $w_0$ to the problem~\eqref{eqn:neumann:regular:2} with $\mat A=\mat A_0$ is given by
\begin{equation*}\widehat w_0(\xi,t) = \sum_{k=1}^{m} f_k(\xi) \exp(2\pi i \abs{\xi} e^{\pi i k/(m+1)} t)\end{equation*}
where $\widehat w_0$ denotes the Fourier transform of $w_0$ in the $x$ variables alone, and where
\begin{equation*}f_k(\xi) =
\sum_{\abs\gamma=m-1}
M_{k\gamma}\,\widehat{g_\gamma}(\xi) \frac{\xi^{\gamma_\pureH}}{\abs\xi^{m+\abs{\gamma_\pureH}}}\end{equation*}
for some constants $M_{k\gamma}$.\footnote{There is a minor error in \cite[Section~6]{BarHM18}, namely a forgotten complex conjugate.} Here $\gamma_\pureH=(\gamma_1,\dots,\gamma_n)$.
A straightforward computation and Plancherel's theorem yields that
\begin{align*}\doublebar{\mathcal{A}_2^+(t\nabla^m w_0)}_{L^2(\R^n)}^2
+ \sup_{t>0}\doublebar{\nabla^{m-1}w_0(\,\cdot\,,t)}_{L^2(\R^n)}^2
&\leq C\sum_{k=1}^{m} \int_{\R^n} \abs{\xi}^{2m-2} \abs{f_k(\xi)}^2\,d\xi
.\end{align*}
But
\begin{equation*}\abs{f_k(\xi)} \leq C \abs{\xi}^{-m}\abs{\widehat{\arr g}(\xi)}\end{equation*}
and so
\begin{align*}
\sum_{k=1}^{m} \int_{\R^n} \abs{\xi}^{2m-2} \abs{f_k(\xi)}^2\,d\xi
&\leq C\int_{\R^n} \abs{\xi}^{-2} \abs{\widehat{\arr g}(\xi)}^2\,d\xi
=C \doublebar{\arr g}_{\dot W^{-1,2}(\R^n)}
.\end{align*}
Thus,
\begin{equation*}\doublebar{\mathcal{A}_2^+(t\nabla^m w_0)}_{L^2(\R^n)}^2
+ \sup_{t>0}\doublebar{\nabla^{m-1}w_0(\,\cdot\,,t)}_{L^2(\R^n)}^2
\leq C \doublebar{\arr g}_{\dot W^{-1,2}(\R^n)}
.\end{equation*}
Thus, $v=w_0$ solves the problem~\eqref{eqn:neumann:rough:2} as well as the problem~\eqref{eqn:neumann:regular:2}. By the bounds~\eqref{eqn:neumann:N:2}, the nontangential estimates in the problem~\eqref{eqn:neumann:both:2} are valid. Thus, solutions to the problem~\eqref{eqn:neumann:both:2} exist; uniqueness of solutions to the problem~\eqref{eqn:neumann:rough:2} implies uniqueness of solutions (and thus well posedness) of the problem~\eqref{eqn:neumann:both:2}.

The change of variables $(x,t)\to (x,-t)$ shows that a problem analogous to the problem~\eqref{eqn:neumann:both:2} in the lower half space is well posed.

We now apply the method of layer potentials of \cite{Ver84,BarM13,BarM16A,Bar17}, as in \cite[Section~7.2]{BarHM18}. Specifically, we will use \cite[Theorems~6.23 and~6.24]{Bar17}. We need to verify Conditions~6.14--6.22 in \cite{Bar17}. Let the space $\XX^+$ be given by
\begin{align*}\doublebar{v}_{\XX^+}
&= \doublebar{\mathcal{A}_2^+(t\nabla^m v)}_{L^2(\R^n)}
+ \doublebar{\widetilde N_+(\nabla^{m-1}v)}_{L^2(\R^n)}
\\&\qquad
+ \doublebar{\mathcal{A}_2^+(t\nabla^m \partial_t v)}_{L^2(\R^n)}
+ \doublebar{\widetilde N_+(\nabla^{m}v)}_{L^2(\R^n)}
,\end{align*}
and let $\XX^-$ be the analogous space of functions defined in $\R^\dmn_-$. Let
\begin{equation*}\mathfrak{D}=\dot W\!A^{0,2}_{m-1}(\R^n)\cap\dot W\!A^{1,2}_{m-1}(\R^n)\end{equation*} with
$\doublebar{\arr f}_\DD=\doublebar{\arr f}_{\dot W\!A^{0,2}_{m-1}(\R^\dmnMinusOne)}+\doublebar{\arr f}_{\dot W\!A^{1,2}_{m-1}(\R^\dmnMinusOne)},$
and let
\begin{equation*}\mathfrak{N} = (\dot W\!A^{0,2}_{m-1}(\R^n))^*\cap(\dot W\!A^{1,2}_{m-1}(\R^n))^*.\end{equation*}
We remark that if $\arr g\in L^2(\R^n)\cap \dot W^{-1,2}(\R^n)$ then $\arr g$ is a representative of an element $\arr G$ of~$\mathfrak{N}$ with $\doublebar{\arr G}_{\mathfrak{N}} \leq C\doublebar{\arr g}_{L^2(\R^n)\cap \dot W^{-1,2}(\R^n)}$.

By the main results of \cite{BarHM17pB}, if $\mat A$ is as in Theorem~\ref{thm:potentials}, and if $v\in \XX^\pm$ and $Lv=0$ in $\R^\dmn_\pm$, then $\Tr_{m-1}^\pm v \in \DD$ and $\M_{\mat A}^\pm v \in \NN$. By the bounds~\cref{eqn:S:lusin:2,eqn:D:lusin:2,eqn:D:lusin:rough:2,eqn:S:lusin:rough:2,eqn:S:N:2,eqn:S:N:rough:2,eqn:D:N:2,eqn:D:N:rough:2,eqn:S:S:horizontal}, we have that $\D^{\mat A}:\DD\to\XX^\pm$ and $\s^L:\NN\to\XX^\pm$ are bounded operators.

By \cite[Theorem~4.3]{BarHM18}, if $v\in\XX^\pm$ and $Lv=0$ in $\R^\dmn_\pm$, then the Green's formula
\begin{equation*}\1_\pm\nabla^m v=\mp \nabla^m \D^{\mat A}(\Tr_{m-1}^\pm v)+\nabla^m \s^L(\M_{\mat A}^\pm v)\end{equation*}
is valid.

Finally, the jump relations \cref{eqn:D:jump,eqn:D:cts} are valid for all $\arr f\in \DD$, and the jump relations \cref{eqn:S:jump,eqn:S:cts} are valid for all $\arr G\in \NN$. (We let $\s^L\arr G=\s^L\arr g$ for any representative $\arr g\in {L^2(\R^n)\cap \dot W^{-1,2}(\R^n)}$ of~$\arr G$; by the definition~\eqref{dfn:S} of $\s^L$, $\s^L \arr G$ is well defined.)

Thus, Conditions~6.14--6.22 in \cite{Bar17} are valid. Therefore, by \cite[Theorems~6.23 and~6.24]{Bar17}, we have that $\M_{\mat A_0}^+\D^{\mat A_0}$ is an invertible operator $\DD\to \NN$. Furthermore, the norm of $(\M_{\mat A_0}^+\D^{\mat A_0})^{-1}$ depends only on the standard parameters and the constant $C$ in the estimate $\doublebar{v}_{\XX^\pm}\leq C\doublebar{\arr G}_\NN$.

As in \cite[Section 7.2]{BarHM18}, let $\mat A_s=(1-s)\mat A_0+s\mat A$ and let $\M_s=\M_{\mat A_s}^+$. By the bounds~\cref{eqn:D:lusin:2,eqn:D:N:2,eqn:D:lusin:rough:2} and \cite[Theorems~6.1 and~6.2]{BarHM17pB}, we have that $\M_s$ is bounded $\DD\to \NN$, uniformly for $s$ in a complex neighborhood $\Omega$ of $[0,1]$. Let $r\in [0,1]$ be such that $\M_r:\DD\to \NN$ is invertible. As in \cite[Section 7.2]{BarHM18}, if $r<s\leq 1$ and $\abs{s-r}$ is small enough (depending only on $\sup_{z\in \Omega} \doublebar{\M_z}_{\DD\to \NN}$ and $\doublebar{\M_r^{-1}}_{\NN\to\DD}$), then $\M_s$ is invertible $\DD\to\NN$, and indeed satisfies
\begin{equation*}\M_s^{-1} = \sum_{j=0}^\infty \M_r^{-1}[(\M_r-\M_s)\M_r^{-1}]^j.\end{equation*}
Thus, $v=\D^{\mat A_s}(\M_s^{-1}\arr g)$ solves the Neumann problem
\begin{equation*}Lv=0\text{ in }\R^\dmn_+,\quad \M_{\mat A_s}^+v=\arr g,\quad \doublebar{v}_{\XX^+}\leq C\doublebar{\M_s^{-1}}_{\NN\to\DD} \doublebar{\arr g}_\NN.\end{equation*}
Then $v$ is also the solution to the problems \cref{eqn:neumann:rough:2,eqn:neumann:regular:2}, and so $v$ is a solution to the problem~\eqref{eqn:neumann:both:2} (with two distinct estimates rather than the single estimate $\doublebar{v}_{\XX^+}\leq C\doublebar{\arr g}_\NN$). Furthermore, the constant $C$ in the problem~\eqref{eqn:neumann:both:2} may be controlled by the constants in the problems~\eqref{eqn:neumann:regular:2} and~\eqref{eqn:neumann:rough:2}, and so is uniformly bounded for all $0\leq s\leq 1$ for which the problem~\eqref{eqn:neumann:both:2} is well posed. This affords a uniform bound on $\doublebar{\M_s^{-1}}_{\NN\to\DD}$ for all such~$s$.

Thus by continuity, $\M_s$ is invertible and the problem~\eqref{eqn:neumann:both:2} is well posed for all $0\leq s\leq 1$, in particular for $s=1$ and $\mat A_s=\mat A$. This completes the proof.
\end{proof}

In our proof of Theorem~\ref{thm:neumann:p:rough}, we will need the following analogue to Lemma~\ref{lem:N:solution} for solutions $u$ whose Neumann boundary values are zero in a cube.

\begin{lem}\label{lem:N:trace:neumann}
Let $L$ be an operator of the form~\eqref{eqn:weak} of order~$2m$ associated to bounded $t$-independent coefficients~$\mat A$ that satisfy the ellipticity condition~\eqref{eqn:elliptic}.

Let $Q\subset\R^n$ be a cube and let $\ell=\ell(Q)/4$.
Let $u\in \dot W^{m,2}(11Q\times(0,3\ell))$.
Suppose that $Lu=0$ in $10Q\times(0,3\ell)$ and that
\begin{equation}
\label{eqn:neumann:zero:Q}
\langle \nabla^m\varphi,\mat A\nabla^m u\rangle_{10Q\times(0,3\ell)}=0
\text { for all } \varphi\in C^\infty_0(10Q\times(-3\ell,3\ell)).\end{equation}

If $2<p<p_{1,L}^+$, then
\begin{align*}\biggl(\fint_{8Q} \widetilde N_{n}^\ell(\1_+\nabla^{m-1} u)^p\biggr)^{1/p}
&\leq C(1,p) \biggl(\fint_{11Q} \abs{\Tr_{m-1}^+ u}^p\biggr)^{1/p}
\\&\qquad+ C(1,p) \biggl(\fint_{10Q} \widetilde N_n^{3\ell}(\nabla^{m-1} u)^2\biggr)^{1/2}.\end{align*}
\end{lem}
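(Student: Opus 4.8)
The plan is to adapt the proof of Lemma~\ref{lem:N:solution}, in which the two-sided solution $v$ on $10Q\times(-3\ell,3\ell)$ is now replaced by the one-sided solution $u$, which however has \emph{vanishing} Neumann data on $10Q\times\{0\}$ in the sense of \eqref{eqn:neumann:zero:Q}. The one genuinely new ingredient is a Caccioppoli inequality that reaches the flat boundary. The point is that the test functions admitted in \eqref{eqn:neumann:zero:Q} are only required to be compactly supported in the \emph{open} set $10Q\times(-3\ell,3\ell)$, so they need not vanish on $\{t=0\}$; by a routine density argument \eqref{eqn:neumann:zero:Q} persists for $\varphi\in\dot W^{m,2}(10Q\times(0,3\ell))$ vanishing near $\partial(10Q)\times(-3\ell,3\ell)$ and near $10Q\times\{3\ell\}$ only. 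Testing against $\varphi=\eta^{2m}(u-P)$, with $P$ a polynomial of degree $m-1$ and $\eta$ a cut-off that equals $1$ on $9Q\times(0,2\ell)$ and does not vanish on $\{t=0\}$, yields
\[
\int_{9Q}\int_0^{2\ell}\abs{\nabla^m u}^2\ \le\ \frac{C}{\ell^2}\int_{10Q}\int_0^{3\ell}\abs{\nabla^{m-1}u-\arr P}^2 ,
\]
for an arbitrary constant array $\arr P$, and likewise at every dyadic scale $\sigma\in(0,2\ell)$; iterating this gives the boundary analogue of Lemma~\ref{lem:slices} valid up to $\{t=0\}$.

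For Step~1 (the pointwise reduction), fix $x\in 8Q$. For $0<t<\ell$ and $\abs{x-y}<t$ the Whitney ball $B((y,t),t/2)$ lies strictly inside $\R^\dmn_+$ and inside $10Q\times(0,3\ell)$, so the argument of \cite[Lemma~3.19]{BarHM18p} applies verbatim and, using the fundamental theorem of calculus in the vertical variable down to $\{t=0\}$ together with the fact that $\partial_r^m u$ is a component of $\nabla^m u$, gives
\[
\fint_{B((y,t),t/2)}\abs{\nabla^{m-1}u}^2\ \le\ C\Bigl(\fint_{\abs{z-y}<t}\abs{\Tr_{m-1}^+u(z)}\,dz\Bigr)^2+C\Bigl(\fint_{\abs{z-y}<t}\int_0^{2t}\abs{\partial_r^m u(z,r)}\,dr\,dz\Bigr)^2 .
\]
Taking the supremum over admissible $(y,t)$ yields, for $x\in 8Q$,
\[
\widetilde N_n^\ell(\1_+\nabla^{m-1}u)(x)\ \le\ C\,\mathcal{M}(\1_{9Q}\abs{\Tr_{m-1}^+u})(x)+C\,\mathcal{M}(\1_{9Q}V)(x),\qquad V(z):=\int_0^{2\ell}\abs{\partial_r^m u(z,r)}\,dr ,
\]
where $\mathcal{M}$ is the Hardy--Littlewood maximal operator. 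By its $L^p$-boundedness and $9Q\subset 11Q$, it then suffices to prove $\bigl(\fint_{9Q}V^p\bigr)^{1/p}\le C(1,p)\bigl(\fint_{11Q}\abs{\Tr_{m-1}^+u}^p\bigr)^{1/p}+C(1,p)\bigl(\fint_{10Q}\widetilde N_n^{3\ell}(\nabla^{m-1}u)^2\bigr)^{1/2}$.

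For Step~2 (bounding $V$), the essential structural point is that every $L^p$-estimate must be kept at the level of $\nabla^{m-1}$ of a solution, never at the level of $\nabla^m u$: since $\partial_r^m u=\partial_r(\partial_r^{m-1}u)$ with $\partial_r^{m-1}u$ a solution (the coefficients being $t$-independent) and a component of $\nabla^{m-1}u$, the Meyers estimate \eqref{eqn:Meyers} applied to $\partial_r^{m-1}u$ followed by the Caccioppoli inequality gives, on interior balls, $\bigl(\fint_{B}\abs{\partial_r^m u}^p\bigr)^{1/p}\lesssim r^{-1}\bigl(\fint_{4B}\abs{\nabla^{m-1}u}^2\bigr)^{1/2}$ for all $p<p_{m-1,L}^+$, and $p_{m-1,L}^+\ge p_{1,L}^+$ by Proposition~\ref{prp:Meyers:t-independent}; going through $\nabla^m u$ directly would only reach $p_{0,L}^+$. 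Decomposing $(0,2\ell)$ into dyadic intervals $[\sigma_{k+1},\sigma_k]$, $\sigma_k=2^{-k}\cdot 2\ell$, bounding $\int_{\sigma_{k+1}}^{\sigma_k}\abs{\partial_r^m u(z,r)}\,dr$ on each by Cauchy--Schwarz in $r$, and combining the boundary version of Lemma~\ref{lem:slices} (on $9Q$ at scale $\sigma_k$) with the above Meyers/Caccioppoli estimate and the boundary Caccioppoli inequality of the first paragraph, one controls the $L^p(9Q)$-norm of the $k$th dyadic piece by a slab average $\bigl(\fint_{10Q}\fint_{\sigma_k/c}^{c\sigma_k}\abs{\nabla^{m-1}u-\arr P_k}^2\bigr)^{1/2}$ with $\arr P_k$ the mean of $\nabla^{m-1}u$ over that slab. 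Summing in $k$: the slab averages of $\nabla^{m-1}u$ are $\le\bigl(\fint_{10Q}\widetilde N_n^{3\ell}(\nabla^{m-1}u)^2\bigr)^{1/2}$ since a.e.\ point of the slab lies in an admissible Whitney ball, while the constants $\arr P_k$ telescope---their consecutive differences being estimated, again, by the boundary Caccioppoli inequality---down to a single copy of $\Tr_{m-1}^+u$, whose $L^p(11Q)$-norm is the remaining term.

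The main obstacle is precisely the near-boundary bookkeeping in Step~2: because $L$ is of higher order there is no De Giorgi--Nash--Moser pointwise control of $\nabla^{m-1}u$, so reaching $p$ up to $p_{1,L}^+$ forces one to route all $L^p$-estimates through $\nabla^{m-1}$ of $u$ and $\partial_r u$ (never $\nabla^m u$), and to organize the vertical dyadic sum so that the factors $\sigma_k^{-1}$ produced by each Caccioppoli step cancel and the scale-dependent subtracted constants $\arr P_k$ collapse to the single boundary datum $\Tr_{m-1}^+u$. Once the boundary Caccioppoli inequality of the first paragraph is in hand, Steps~1--2 follow the pattern of \cite{BarHM18p} and of Lemma~\ref{lem:N:solution}; the case $m=1$, in which the vertical-derivative device does not improve the exponent, is covered by the classical second-order theory.
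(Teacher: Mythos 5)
Your boundary Caccioppoli inequality and your Step~1 are plausible, but Step~2 contains a genuine gap, located exactly where you flag "the main obstacle." After Cauchy--Schwarz in $r$, Meyers for the solution $\partial_r^{m-1}u$, and Caccioppoli at scale $\sigma_k$, the $k$th dyadic piece of $V$ is controlled by a slab oscillation $\bigl(\fint_{10Q}\fint_{\sigma_k/c}^{c\sigma_k}\abs{\nabla^{m-1}u-\arr P_k}^2\bigr)^{1/2}$ with \emph{no gain in $k$}: the factor $\sigma_k^{1/2}$ from Cauchy--Schwarz is exactly cancelled by the $\sigma_k^{-1}$ from Caccioppoli, so each term is bounded by a $k$-independent quantity, and the sum over $k$ diverges unless the oscillation of $\nabla^{m-1}u$ on the slabs decays geometrically as $\sigma_k\to 0$ -- that is, unless $\nabla^{m-1}u$ has some H\"older continuity up to $\{t=0\}$, which is precisely the De Giorgi--Nash--Moser type control you correctly note is unavailable for higher order operators. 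The telescoping of the $\arr P_k$ does not repair this: $\sum_k\abs{\arr P_k-\arr P_{k+1}}$ is dominated by the same divergent sum of oscillations, and in any case the $\arr P_k$ are constants over the whole slab while $\Tr_{m-1}^+u$ is a function of $z$, so no telescoping of global means can produce the $L^p(11Q)$ norm of the trace; making the $\arr P_k$ depend on $z$ at scale $\sigma_k$ turns this into a Carleson/square-function argument that you have not supplied.

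The paper avoids boundary regularity altogether by a reflection through the double layer potential. Let $\arr f_{10Q}$ be a cutoff of $\Tr_{m-1}^+u$ (equal to it on $10Q$, supported in $11Q$, with $\doublebar{\arr f_{10Q}}_{L^q(11Q)}\leq C\doublebar{\Tr_{m-1}^+u}_{L^q(11Q)}$) and set $u_Q=\1_+u+\D^{\mat A}(\arr f_{10Q})$. The jump relation~\eqref{eqn:D:jump} forces $\Tr_{m-1}^+u_Q=\Tr_{m-1}^-u_Q$ on $10Q$, so $u_Q\in\dot W^{m,2}$ of the two-sided cylinder, and the continuity relation~\eqref{eqn:D:cts} combined with~\eqref{eqn:neumann:zero:Q} gives $Lu_Q=0$ in $10Q\times(-3\ell,3\ell)$. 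The interior Lemma~\ref{lem:N:solution} then applies to $u_Q$, and the correction $\nabla^{m-1}\D^{\mat A}(\arr f_{10Q})$ is bounded in $L^p$ by Theorem~\ref{thm:potentials:N:+}; that is exactly where the term $\bigl(\fint_{11Q}\abs{\Tr_{m-1}^+u}^p\bigr)^{1/p}$ in the statement comes from. To rescue your direct route you would need a boundary reverse H\"older inequality for $\nabla^m u$ up to $\{t=0\}$ under zero Neumann data with exponent up to $p_{1,L}^+$, which is a substantially stronger statement than the boundary Caccioppoli inequality you derive.
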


We remark that if $\arr g\in \M_{\mat A}^+ u$ in the sense of formula~\eqref{eqn:Neumann:intro} for some function $\arr g$ with $\arr g=\arr 0$ in $10Q$, then formula~\eqref{eqn:neumann:zero:Q} is valid.

\begin{proof}[Proof of Lemma~\ref{lem:N:trace:neumann}]
By standard trace theorems (see, for example, the standard text \cite{Eva98}), $\Tr_{m-1} u\in L^2(11Q)$. Let $\arr f_{10Q}=(\Tr_{m-1}^+ u)_{10Q}$ be as in the proof of Theorem~\ref{thm:potentials:N:+}. That is, let $P_{11Q}$ be a polynomial of degree at most $m-2$ such that $\int_{11Q} \Trace (\partial^\xi u-\partial^\xi P_{11Q})=0$ for all $\xi$ with $\abs\xi\leq m-2$. Let $\eta_{10Q}$ be a smooth cutoff function equal to~$1$ in $10Q\times(-\ell,\ell)$ and supported in $11Q\times(-3\ell,3\ell)$. Let $\tilde u_Q=\eta_{10Q}(u-P_{11Q})+P_{11Q}$ and let $\arr f_{10Q}=\Tr_{m-1}^+ \tilde u_Q$. We extend $\arr f_{10Q}$ by zero outside of~$11Q$.

We observe that $\tilde u_Q\in \dot W^{m,2}(\R^\dmn_+)$, and so by Remark~\ref{rmk:W2:trace}, $\arr f_{10Q}\in \dot W\!A^{1/2,2}_{m-1}(\R^n)$. We further observe that $\arr f_{10Q}=\Tr_{m-1}^+ u$ in $10Q$, $\arr f_{10Q}$ is supported in $11Q$, and by the Poincar\'e inequality we have that $\doublebar{\arr f_{10Q}}_{L^2(11Q)}\leq C\doublebar{\Tr_{m-1}^+ u}_{L^2(11Q)}$.

Let $u_Q=\1_+u+\D^{\mat A}(\arr f_{10Q})$. Because $\arr f_{10Q}\in \dot W\!A^{1/2,2}_{m-1}(\R^n)$, we have that $\D^{\mat A}(\arr f_{10Q})\in \dot W^{m,2}(\R^\dmn)$ and so $u_Q\in \dot W^{m,2}(10Q\times((-3\ell,0)\cup(0,3\ell)))$.
By the jump relation~\eqref{eqn:D:jump},
\begin{align*}\Tr_{m-1}^+ u_Q
&= \Tr_{m-1}^+ u+\Tr_{m-1}^+\D^{\mat A}(\arr f_{10Q})
\\&= \Tr_{m-1}^+ u-\arr f_{10Q} + \Tr_{m-1}^- \D^{\mat A}(\arr f_{10Q})
\\&= \bigl(\Tr_{m-1}^+ u-\arr f_{10Q}\bigr) +\Tr_{m-1}^- u_Q.\end{align*}
Thus $\Tr_{m-1}^+ u_Q=\Tr_{m-1}^- u_Q$ in $10Q$,
and so $u_Q\in \dot W^{m,2}(10Q\times(-3\ell,3\ell))$.
Let $\varphi \in C^\infty_0(10Q\times(-3\ell,3\ell))$. Then
\begin{align*}
\langle \nabla^m\varphi,\mat A\nabla^m u_Q\rangle_{10Q\times(-3\ell,3\ell)}
&=
\langle \nabla^m\varphi,\mat A\nabla^m u\rangle_{10Q\times(0,3\ell)}
\\&\qquad+\langle \nabla^m\varphi,\mat A\nabla^m \D^{\mat A}(\arr f_{10Q})\rangle_{10Q\times(0,3\ell)}
\\&\qquad+\langle \nabla^m\varphi,\mat A\nabla^m \D^{\mat A}(\arr f_{10Q})\rangle_{10Q\times(-3\ell,0)}
.\end{align*}
By assumption the first inner product is zero. By the continuity relation~\eqref{eqn:D:cts}
we have that the sum
\begin{equation*}\langle \nabla^m\varphi,\mat A\nabla^m \D^{\mat A}(\arr f_{10Q})\rangle_{10Q\times(0,3\ell)}
+\langle \nabla^m\varphi,\mat A\nabla^m \D^{\mat A}(\arr f_{10Q})\rangle_{10Q\times(-3\ell,0)}
=0\end{equation*}
and so $Lu_Q=0$ in $10Q\times(-3\ell,3\ell)$.

By Lemma~\ref{lem:N:solution}, if $2<p<p_{j,L}^+$ then
\begin{equation*}
\biggl(\fint_{8Q} \widetilde N^\ell_n(\nabla^{m-j}u_Q)^{p}\,dx\biggr)^{1/{p}}
\leq C(1,p)\biggl(\fint_{10Q}\widetilde N_n^{3\ell} (\nabla^{m-j}u_Q)^2\biggr)^{1/2}
.\end{equation*}
We bound $\widetilde N_*(\nabla^{m-j}\D^{\mat A}(\arr f_{10Q}))$ using Theorem~\ref{thm:potentials:N:+}. This completes the proof.
\end{proof}

Observe that the boundary values $\Tr_{m-1}^+u$ appear in Lemma~\ref{lem:N:trace:neumann}. We now establish a bound on $\Tr_{m-1}^+ u$.
The proof of the following lemma is based on the proof of \cite[Theorem~5.1]{BarHM18}.

\begin{lem}\label{lem:Neumann:zero:reverse}
Let $L$ be an operator of the form~\eqref{eqn:weak} of order~$2m$ associated to bounded self-adjoint $t$-independent coefficients~$\mat A$ that satisfy the ellipticity condition~\eqref{eqn:elliptic:slices}.
Let $u$ satisfy the bounds
\begin{multline*}\doublebar{\mathcal{A}_2^+(t\nabla^m u)}_{L^2(\R^n)}
+\doublebar{\mathcal{A}_2^+(t\nabla^m \partial_\dmn u)}_{L^2(\R^n)}
\\+\doublebar{\widetilde N_+(\nabla^{m-1} u)}_{L^2(\R^n)}
+\doublebar{\widetilde N_+(\nabla^m u)}_{L^2(\R^n)}
<\infty.\end{multline*}
Suppose that $Lu=0$ in $\R^\dmn_+$ and that $\M_{\mat A}^+ u=0$ in $14Q$ for some cube $Q\subset\R^n$.
Then
\begin{align*}\biggl(\int_{12Q} \abs{\Tr_m^+ u}^2\biggr)^{1/2}
&\leq
	\frac{C}{\ell(Q)}\biggl(\int_{14Q} \widetilde N_+(\nabla^{m-1} u)^2\biggr)^{1/2}
.\end{align*}
\end{lem}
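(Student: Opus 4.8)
The plan is to combine a boundary Caccioppoli inequality with a Rellich-type identity and an iteration, following the proof of \cite[Theorem~5.1]{BarHM18}. By translation and dilation we may assume $\ell(Q)=1$, since both sides of the asserted inequality scale like $\ell(Q)^{n-2m}$. We shall use freely the following routine consequences of interior regularity (Lemmas~\ref{lem:Caccioppoli} and~\ref{lem:slices}) and of the finiteness hypotheses: the derivatives $\nabla^j u$, $0\le j\le m$, lie in $L^2(14Q\times(0,3))$; the traces $\Tr_j^+u$ exist in $L^2_{loc}(14Q)$ for $j\le m$ and $\nabla^j u(\,\cdot\,,t)\to\Tr_j^+u$ in $L^2_{loc}(14Q)$ as $t\to0^+$; and $s\mapsto\doublebar{\Tr_m^+u}_{L^2(sQ)}^2$ is a nondecreasing function on $[12,14]$ with $\doublebar{\Tr_m^+u}_{L^2(sQ)}^2\le\doublebar{\widetilde N_+(\nabla^m u)}_{L^2(14Q)}^2<\infty$. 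This last boundedness is the only place where the hypothesis $\widetilde N_+(\nabla^m u)\in L^2(\R^n)$ is used; it does not enter the final estimate.

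First I would prove a boundary Caccioppoli bound. Let $\eta=\eta(x,t)$ be a cutoff with $\eta\equiv1$ on $13Q\times(0,1)$, $\supp\eta\subset14Q\times(0,\tfrac32)$, and $\abs{\nabla^k\eta}\le C$, and let $P$ be a polynomial of degree at most $m-1$ chosen by iterated Poincar\'e's inequality so that $\doublebar{\nabla^j(u-P)}_{L^2(14Q\times(0,3/2))}\le C\doublebar{\nabla^{m-1}u}_{L^2(14Q\times(0,3/2))}\le C\doublebar{\widetilde N_+(\nabla^{m-1}u)}_{L^2(14Q)}$ for $0\le j\le m-1$ (the last step by a Whitney covering argument). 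Then $\eta^{2m}(u-P)\in\dot W^{m,2}(\R^\dmn_+)$ --- here one needs $\int_{14Q\times(0,3/2)}\abs{\nabla^m u}^2<\infty$, which again follows from $\widetilde N_+(\nabla^m u)\in L^2$ by a Whitney covering --- and its trace $\Tr_{m-1}(\eta^{2m}(u-P))$ is supported in $14Q$, so $\langle\nabla^m(\eta^{2m}(u-P)),\mat A\nabla^m u\rangle_{\R^\dmn_+}=0$ by the vanishing of $\M_{\mat A}^+u$ on $14Q$ (using \eqref{eqn:weak}, \eqref{eqn:Neumann:intro}, and \cite[Lemma~2.4]{BarHM17pB}). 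Running the algebra in the proof of Lemma~\ref{lem:Caccioppoli} --- apply the G\r{a}rding inequality \eqref{eqn:elliptic} to the genuine function $\eta^{m}(u-P)$, relate it to the identity above through commutators, and use Young's inequality to absorb --- every occurrence of $\nabla^m u$ is absorbed into the left side and only $\nabla^{\le m-1}(u-P)$ survives, giving
\begin{equation*}\doublebar{\nabla^m u}_{L^2(13Q\times(0,1))}\le C\doublebar{\widetilde N_+(\nabla^{m-1}u)}_{L^2(14Q)}.\end{equation*}
The essential point is that $\widetilde N_+(\nabla^m u)$ does \emph{not} appear here, because every top-order factor is paired with a cutoff power of $\eta$ and absorbed.

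Next comes the Rellich step. Fix $12\le r<\rho\le13$, let $\psi=\psi(x)$ be a cutoff with $\psi\equiv1$ on $rQ$, $\supp\psi\subset\rho Q$, $\abs{\nabla^k\psi}\le C(\rho-r)^{-k}$, and let $\Theta=\Theta(t)$ satisfy $\Theta\equiv1$ on $[0,\tfrac12]$ and $\supp\Theta\subset[0,1)$. Testing $\langle\nabla^m\Psi,\mat A\nabla^m u\rangle_{\R^\dmn_+}=0$ against the admissible function $\Psi=\psi\Theta_\delta\partial_t u$, where $\Theta_\delta$ agrees with $\Theta$ for $t\ge2\delta$ and vanishes for $t\le\delta$ (so that the possible $t^{-1}$ blow-up of $\nabla^m\partial_t u$ at the boundary is avoided), feeding the resulting identity $\int_{\R^\dmn_+}\langle\nabla^m\partial_t u,\mat A\nabla^m u\rangle\psi\Theta_\delta=-\langle[\nabla^m,\psi\Theta_\delta]\partial_t u,\mat A\nabla^m u\rangle_{\R^\dmn_+}$ into the Rellich identity $\partial_t\langle\nabla^m u,\mat A\nabla^m u\rangle=2\re\langle\nabla^m\partial_t u,\mat A\nabla^m u\rangle$ (valid since $\mat A$ is $t$-independent and self-adjoint), integrating by parts in $t$, and letting $\delta\to0$, we obtain
\begin{equation*}\int_{\R^n}\psi\,\langle\Tr_m^+u,\mat A\,\Tr_m^+u\rangle=-\int_{\R^\dmn_+}\psi\Theta'\,\langle\nabla^m u,\mat A\nabla^m u\rangle+2\re\langle[\nabla^m,\psi\Theta]\partial_t u,\mat A\nabla^m u\rangle_{\R^\dmn_+}.\end{equation*}
The first term on the right is supported at heights $t\in[\tfrac12,1)$, hence bounded by $C\doublebar{\widetilde N_+(\nabla^{m-1}u)}_{L^2(14Q)}^2$ by the Caccioppoli inequality. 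In the commutator term, the contributions carrying a $t$-derivative of $\Theta$ are likewise supported at heights $\ge\tfrac12$, while the remaining contributions are supported in $(\rho Q\setminus rQ)\times(0,1)\subset13Q\times(0,1)$, involve $\nabla^{\le m}u$ with coefficients $\lesssim(\rho-r)^{-1}$, and by Cauchy--Schwarz together with the boundary Caccioppoli bound (and Poincar\'e for the lower-order factors) are at most $C(\rho-r)^{-2}\doublebar{\widetilde N_+(\nabla^{m-1}u)}_{L^2(14Q)}^2$. For the left side one splits $\Tr_m^+u$ into its purely horizontal components, which are the tangential derivatives of $\Tr_{m-1}^+u$, and its components involving $\partial_t$; the horizontal part of $\mat A$ is elliptic on $\R^n$ (apply \eqref{eqn:elliptic:slices} to a $t$-independent $\varphi$), the top purely vertical coefficient satisfies $\re A_{\alpha\alpha}\ge\lambda$ with $\partial^\alpha=\partial_t^m$ (the argument of Lemma~\ref{lem:A0:2d}), and the vanishing of $\M_{\mat A}^+u$ on $14Q$ together with $Lu=0$ expresses each $\partial_t$-component of $\Tr_m^+u$ through the horizontal components and lower-order traces; hence $\re\int_{\R^n}\psi\langle\Tr_m^+u,\mat A\Tr_m^+u\rangle\ge c\doublebar{\Tr_m^+u}_{L^2(rQ)}^2-\tfrac12\doublebar{\Tr_m^+u}_{L^2(\rho Q)}^2-C(\rho-r)^{-2}\doublebar{\widetilde N_+(\nabla^{m-1}u)}_{L^2(14Q)}^2$. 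Combining and absorbing yields, for all $12\le r<\rho\le13$,
\begin{equation*}\doublebar{\Tr_m^+u}_{L^2(rQ)}^2\le\tfrac12\doublebar{\Tr_m^+u}_{L^2(\rho Q)}^2+\frac{C}{(\rho-r)^2}\doublebar{\widetilde N_+(\nabla^{m-1}u)}_{L^2(14Q)}^2.\end{equation*}
Since $s\mapsto\doublebar{\Tr_m^+u}_{L^2(sQ)}^2$ is bounded on $[12,13]$, the iteration lemma \cite[Chapter~V, Lemma~3.1]{Gia83} upgrades this to $\doublebar{\Tr_m^+u}_{L^2(12Q)}^2\le C\doublebar{\widetilde N_+(\nabla^{m-1}u)}_{L^2(14Q)}^2$, which is the assertion after undoing the scaling.

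The main obstacle is the estimate of the left-hand side (and of the near-boundary commutator terms) in the Rellich step. Because \eqref{eqn:elliptic:slices} is coercive only against honest $m$-th gradients, $\langle\Tr_m^+u,\mat A\,\Tr_m^+u\rangle$ cannot be bounded below by $\abs{\Tr_m^+u}^2$ directly: one must separate the horizontal part of $\Tr_m^+u$ (a gradient of $\Tr_{m-1}^+u$), use the equation and the zero Neumann data on $14Q$ to control the conormal part by it and by lower-order traces, and organize the commutators so that every near-boundary occurrence of $\nabla^m u$ carries a cutoff and is absorbed --- so that only $\nabla^{m-1}u$, controlled by $\widetilde N_+(\nabla^{m-1}u)$, survives on the right. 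This bookkeeping is precisely that of \cite[Theorem~5.1]{BarHM18}, adapted to the present localized setting and to the modified nontangential maximal function.
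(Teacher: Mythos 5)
Your skeleton (boundary Caccioppoli bound, Rellich identity $\partial_t\langle\nabla^m u,\mat A\nabla^m u\rangle=2\re\langle\nabla^m\partial_t u,\mat A\nabla^m u\rangle$ from $t$-independence and self-adjointness, a regularization in $t$, and identification of the main bulk term with the vanishing Neumann pairing) matches the paper's proof. But the step you yourself flag as the main obstacle --- the lower bound
\begin{equation*}
\re\int_{\R^n}\psi\,\langle\Tr_m^+u,\mat A\,\Tr_m^+u\rangle\ \geq\ c\doublebar{\Tr_m^+u}_{L^2(rQ)}^2-\tfrac12\doublebar{\Tr_m^+u}_{L^2(\rho Q)}^2-\cdots
\end{equation*}
--- is not established by the argument you sketch, and as described it does not go through. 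The condition~\eqref{eqn:elliptic:slices} is a G\aa rding inequality, not pointwise ellipticity, so the quadratic form $\langle\arr F,\mat A\arr F\rangle$ is coercive only when $\arr F$ is the full $m$-th gradient of a compactly supported function on the slice; $\psi\Tr_m^+u$ is not such a gradient, and the cross terms $A_{\alpha\beta}$ coupling purely horizontal $\alpha$ to $\beta$ with $\beta_\dmn\geq1$ are not controlled by horizontal coercivity plus $\re A_{\alpha\alpha}\geq\lambda$ for $\alpha=m\vec e_\dmn$. Moreover, the vanishing of $\M_{\mat A}^+u$ supplies only the single conormal relation, whereas $\Tr_m^+u$ contains the $m$ distinct families $\partial_t^k\partial_x^{m-k}u$, $1\leq k\leq m$; without pointwise invertibility of the coefficient tensor there is no algebraic way to express all of these through the horizontal components and lower-order traces. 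This is exactly the point at which second-order Rellich arguments fail to generalize.

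The paper sidesteps this entirely: it applies~\eqref{eqn:elliptic:slices} at height $\varepsilon$ to the genuine compactly supported function $(u\varphi)(\,\cdot\,,\varepsilon)$ with $\varphi=\rho(x)\eta(t)$, so that the \emph{full} $m$-th gradient $\nabla^m(u\varphi)(\,\cdot\,,\varepsilon)$ appears on the coercive side at no cost. The Rellich identity then converts $\int_{\R^n}\langle\nabla^m(u\varphi),\mat A\nabla^m(u\varphi)\rangle$ at height $\varepsilon$ into $-2\re\int_\varepsilon^\infty\int_{\R^n}\langle\nabla^m\partial_\dmn(u\varphi),\mat A\nabla^m(u\varphi)\rangle$, Leibniz's rule isolates the main term $\langle\Tr_{m-1}(\varphi^2\partial_\dmn u_\varepsilon),\M_{\mat A}^+u_\varepsilon\rangle_{\R^n}$ (which tends to zero by \cite[Lemma~4.2]{BarHM18} since $\M_{\mat A}^+u=0$ on $14Q$), and all commutator terms involve at least one derivative falling on $\varphi$ and are absorbed or bounded via Caccioppoli, Poincar\'e, and Lemma~\ref{lem:slices}. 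No hole-filling iteration is needed. To repair your argument you would have to either adopt this arrangement of the cutoff, or supply a genuine proof of the coercivity of the weighted boundary form, which is the hard content of the lemma.
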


\begin{proof}
Let $\varepsilon>0$ and let $u_\varepsilon(x,t)=u(x,t+\varepsilon)$. Because $\mat A$ is $t$-independent, we have that $Lu_\varepsilon=0$ in $\R^\dmn_+$. We begin by bounding $\Tr_m^+ u_\varepsilon=\nabla^m u(\,\cdot\,,\varepsilon)$ in $12Q$.

Let $\varphi(x,t)=\rho(x)\,\eta(t)$, where $\rho$ and $\eta$ are smooth, real-valued, supported in $13Q$ and $(-4\ell(Q),4\ell(Q))$, and equal to~$1$ in $12Q$ and $(-2\ell(Q),2\ell(Q))$, respectively. If $0<\varepsilon<2\ell(Q)$, then
\begin{equation*}\int_{12Q} \abs{\nabla^m u(x,\varepsilon)}^2\,dx
\leq
\int_{\R^n} \abs{\nabla^m (u\varphi)(x,\varepsilon)}^2\,dx.\end{equation*}
By the bound~\eqref{eqn:elliptic:slices},
\begin{equation*}
\int_{\R^n} \abs{\nabla^m (u\varphi)(x,\varepsilon)}^2\,dx
\leq \frac{1}{\lambda} \re
\int_{\R^n}\langle \nabla^m (u\varphi)(x,\varepsilon),\mat A(x)\nabla^m(u\varphi)(x,\varepsilon)\rangle\,dx.
\end{equation*}
Because $\mat A$ is $t$-independent and self-adjoint, and because $\varphi(y,s)=0$ for all $s>4\ell(Q)$, we have that
\begin{equation*}\int_{\R^n}\langle \nabla^m (u\varphi)(\,\cdot\,,\varepsilon), \mat A\nabla^m(u\varphi)(\,\cdot\,,\varepsilon)\rangle
=-2\re\int_\varepsilon^\infty  \int_{\R^n}
\langle \nabla^m \partial_\dmn(u\varphi),\mat A\nabla^m(u\varphi)\rangle
.\end{equation*}
We wish to write the right hand side in terms of $\M_{\mat A}^+ u_\varepsilon$. Let $\abs\alpha=\abs\beta=m$. By Leibniz's rule,
\begin{align*}
\overline{\partial^\alpha \partial_\dmn(u\varphi)}\,A_{\alpha\beta}\,\partial^\beta(u\varphi)
&=
\overline{\partial^\alpha (\varphi\,\partial_\dmn(u\varphi))}\,A_{\alpha\beta}\,\partial^\beta u
\\&\qquad
-\sum_{\gamma<\alpha}
\binom{\alpha}{\gamma}
\overline{\partial^{\alpha-\gamma}\varphi\,\partial^\gamma \partial_\dmn(u\varphi)}\,A_{\alpha\beta}\,\partial^\beta u
\\&\qquad
+\sum_{\delta<\beta}
\binom{\beta}{\delta}
\overline{\partial^\alpha \partial_\dmn(u\varphi)}\,A_{\alpha\beta}\,\partial^{\beta-\delta} \varphi\,
\partial^\delta u
\end{align*}
where $\binom{\alpha}{\gamma}$ is an appropriate constant, and where $\gamma<\alpha$ if $\gamma\in (\N_0)^\dmn$, $\abs\gamma<\abs\alpha$, and $\gamma_j\leq \alpha_j$ for all $1\leq j\leq \dmn$.

Because $\mat A$ is $t$-independent, an integration by parts in~$t$ shows that
\begin{multline*}
\int_\varepsilon^\infty  \int_{\R^n}
\overline{\partial^\alpha \partial_\dmn(u\varphi)}\,A_{\alpha\beta}\,\partial^{\beta-\delta} \varphi\,
\partial^\delta u
\\\begin{aligned}
&=
-\int_\varepsilon^\infty  \int_{\R^n}
\overline{\partial^\alpha (u\varphi)}\,A_{\alpha\beta}\,\partial_\dmn\bigl(\partial^{\beta-\delta} \varphi\,
\partial^\delta u\bigr)
\\&\qquad-
\int_{\R^n}
\overline{\partial^\alpha (u\varphi)(x,\varepsilon)}\,A_{\alpha\beta}(x)\,\partial^{\beta-\delta} \varphi(x,\varepsilon)\,
\partial^\delta u(x,\varepsilon)\,dx
.\end{aligned}\end{multline*}
Thus,
\begin{multline*}
\int_{\R^n} \abs{\nabla^m (u\varphi)(x,\varepsilon)}^2\,dx
\\
\begin{aligned}
&\leq
	\frac{-2}{\lambda} \re
	\int_\varepsilon^\infty  \int_{\R^n}
	\langle \nabla^m (\varphi\partial_\dmn(u\varphi)),\mat A\nabla^m u\rangle
	\\&\qquad
	+\frac{2}{\lambda} \re
	\sum
	\int_\varepsilon^\infty  \int_{\R^n}
	\binom{\alpha}{\gamma}
	\overline{\partial^{\alpha-\gamma}\varphi\,\partial^\gamma \partial_\dmn(u\varphi)}\,A_{\alpha\beta}\,\partial^\beta u
	\\&\qquad
	+\frac{2}{\lambda} \re
	\sum
	\int_\varepsilon^\infty  \int_{\R^n}
	\binom{\beta}{\delta}
	\overline{\partial^\alpha (u\varphi)}\,A_{\alpha\beta}\,\partial_\dmn(\partial^{\beta-\delta} \varphi\,
	\partial^\delta u)
	\\&\qquad
	+\frac{2}{\lambda} \re
	\sum
	\int_{\R^n}
	\binom{\beta}{\delta}
	\overline{\partial^\alpha (u\varphi)(x,\varepsilon)}\,A_{\alpha\beta}(x)\, \partial^{\beta-\delta} \varphi(x,\varepsilon)\,
	\partial^\delta u(x,\varepsilon)\,dx
\\&=I+II+III+IV
\end{aligned}\end{multline*}
where the sums are over appropriate ranges of multiindices.

We normalize $u$ so that $\int_{13Q}\int_0^{4\ell(Q)}\nabla^j u=0$ whenever $0\leq j\leq m-2$. By the Poincar\'e inequality, if $0\leq j\leq m-2$ then
\begin{equation}
\label{eqn:neumann:proof:poincare}
\int_{13Q}\int_0^{4\ell(Q)}\abs{\nabla^j u}^2\leq
C\ell(Q)^{2m-2-2j}\int_{13Q}\int_0^{4\ell(Q)}\abs{\nabla^{m-1} u}^2.\end{equation}
Furthermore, by boundedness of the trace map (see, for example, \cite{Eva98}), we have that for such~$j$, if $0<\varepsilon<2\ell(Q)$ then
\begin{equation}\label{eqn:neumann:proof:trace}
\int_{13Q}\abs{\nabla^j u(x,\varepsilon)}^2\,dx\leq C\ell(Q)^{2m-3-2j}\int_{13Q}\int_0^{4\ell(Q)}\abs{\nabla^{m-1} u}^2.\end{equation}
Applying these bounds, we see that
\begin{equation*}II+III\leq
\frac{C}{\ell(Q)}
\int_{13Q}\int_0^{4\ell(Q)} \abs{\nabla^m u}^2
+
\frac{C}{\ell(Q)^3}
\int_{13Q}\int_0^{4\ell(Q)} \abs{\nabla^{m-1} u}^2
\end{equation*}
and
\begin{align*}IV
&\leq
	\frac{C}{\ell(Q)}
	\doublebar{\nabla^m(u\varphi)(\,\cdot\,,\varepsilon)}_{L^2(\R^n)} \doublebar{\nabla^{m-1} u(\,\cdot\,,\varepsilon)}_{L^2(13Q)}
	\\&\qquad
	+\frac{C}{\ell(Q)^{3/2}} \doublebar{\nabla^m(u\varphi)(\,\cdot\,,\varepsilon)}_{L^2(\R^n)} \biggl(\int_{13Q}\int_0^{4\ell(Q)}\abs{\nabla^{m-1} u}^2\biggr)^{1/2}
.\end{align*}
By the boundary Caccioppoli inequality (see \cite[Lemma~16]{Bar16}),
\begin{equation*}
\int_{13Q}\int_0^{4\ell(Q)} \abs{\nabla^m u}^2
\leq
	\frac{C}{\ell(Q)^2}\int_{14Q}\int_0^{5\ell(Q)} \abs{\nabla^{m-1} u}^2
\end{equation*}
and so by Young's inequality, Lemma~\ref{lem:slices} and the definition of~$\widetilde N$,
\begin{align*}II+III+IV
&\leq
	\frac{C}{\ell(Q)^2}
	\int_{14Q} \widetilde N_+(\nabla^{m-1} u)^2
	+\frac{1}{2}
	\doublebar{\nabla^m(u\varphi)(\,\cdot\,,\varepsilon)}_{L^2(\R^n)}^2
.\end{align*}
We are left with the term~$I$. Using the definition~\eqref{eqn:Neumann:intro} of Neumann boundary values and the fact that $\partial_t\varphi(x,t)=0$ for $t$ small enough, we see that
\begin{equation*}I=	\frac{-2}{\lambda} \re
	\left\langle \Tr_{m-1}(\varphi^2\partial_\dmn u_\varepsilon),
	\M_{\mat A}^+ u_\varepsilon\right\rangle_{\R^n}
.\end{equation*}
Because $\M_{\mat A}^+u=0$ in $14Q$ and  $\Tr_{m-1}(\varphi^2\partial_\dmn u_\varepsilon)=0$ outside of~$13Q$, we have that
\begin{equation*}I=\frac{-2}{\lambda} \re
	\langle \Tr_{m-1}(\varphi^2\partial_\dmn u_\varepsilon),
	\M_{\mat A}^+ u_\varepsilon-\M_{\mat A}^+ u\rangle_{\R^n}.\end{equation*}
Furthermore, by Lemma~\ref{lem:slices} and the assumed nontangential bounds on $\nabla^m u$ and $\nabla^{m-1} u$, we have that $\Tr_m^+ u_\varepsilon$ and $\Tr_{m-1}^+ u_\varepsilon$ are in $L^2(\R^n)$, uniformly in~$\varepsilon$, and by the bound~\eqref{eqn:neumann:proof:trace}, we have that $\Tr_j^+ u_\varepsilon\in L^2(13Q)$ for all $0\leq j\leq m-2$, again uniformly in~$\varepsilon$. Thus, $\Tr_{m-1}(\varphi^2\partial_\dmn u_\varepsilon)\in {\dot W\!A^{0,2}_{m-1}(\R^n)}$ with a norm that may be bounded independently of~$\varepsilon$.
By \cite[Lemma~4.2]{BarHM18}, we have that $\M_{\mat A}^+ (u-u_\varepsilon)\to 0$ in
$(\dot W\!A^{0,2}_{m-1}(\R^n))^*$ as $\varepsilon\to 0^+$. Thus, $\lim_{\varepsilon \to 0^+} I=0$, and so
\begin{equation*}
\lim_{\varepsilon\to 0^+}
\int_{\R^n} \abs{\nabla^m (u\varphi)(x,\varepsilon)}^2\,dx
\leq
	\frac{C}{\ell(Q)^2}
	\int_{14Q} \widetilde N_+(\nabla^{m-1} u)^2
.\end{equation*}
By \cite[Theorem~5.3]{BarHM17pB},
\begin{equation*}\int_{12Q} \abs{\Tr_m u}^2=\lim_{\varepsilon\to 0^+} \int_{12Q} \abs{\nabla^m u(x,\varepsilon)}^2\,dx
\leq\lim_{\varepsilon\to 0^+}
\int_{\R^n} \abs{\nabla^m (u\varphi)(x,\varepsilon)}^2\,dx
\end{equation*}
and so
\begin{equation*}\int_{12Q} \abs{\Tr_m u}^2\leq 	\frac{C}{\ell(Q)^2}
	\int_{14Q} \widetilde N_+(\nabla^{m-1} u)^2
\end{equation*}
as desired.
\end{proof}

We are now in a position to prove the main result of this section. We will establish that the solutions to the Neumann  problem~\eqref{eqn:neumann:both:2} satisfy a nontangential estimate for all $\arr g$ in a certain dense subspace of $\dot W^{-1,p}$. This subspace is defined as follows.

Suppose that $\arr h$ is an array of \emph{vector}-valued functions, so that $\vec h_\gamma:\R^n\to \C^n$ for each multiindex $\gamma$ with $\abs\gamma=m-1$. We define $\Div \arr h$ as the array given by
\begin{equation}\label{eqn:div:array}
(\Div \arr h)_\gamma = \Div \vec h_\gamma=\sum_{j=1}^n \partial_{x_j} (h_\gamma)_j.\end{equation}
\begin{rmk}
If $1<p<\infty$ and $\arr h\in L^p(\R^n)$, and if the divergence is taken in the distributional sense, then $\Div \arr h\in \dot W^{-1,p}(\R^n)$ with $\doublebar{\Div \arr h}_{\dot W^{-1,p}(\R^n)}\leq \doublebar{\arr h}_{L^p(\R^n)}$.
Conversely, by the Hahn-Banach theorem, if $\arr g\in \dot W^{-1,p}(\R^n)$ for some $1<p<\infty$, then there is an $\arr h$ with $\doublebar{\arr h}_{L^p(\R^n)}\approx \doublebar{\arr g}_{\dot W^{-1,p}(\R^n)}$ and with $\arr g=\Div\arr h$. Thus, because $C^\infty_0(\R^n)$ is dense in $L^p(\R^n)$, we have that $\{\Div\arr h:\arr h\in C^\infty_0(\R^n)\}$ is dense in $\dot W^{-1,p}(\R^n)$.
\end{rmk}

\begin{thm} \label{thm:neumann:N}
Let $L$ be an operator of the form~\eqref{eqn:weak} of order~$2m$ associated to bounded self-adjoint $t$-independent coefficients~$\mat A$ that satisfy the ellipticity condition~\eqref{eqn:elliptic:slices}.

There is some $\varepsilon>0$ such that if
\begin{equation*}
\max\biggl(0,\frac{1}{2}-\frac{1}{n}-\varepsilon\biggr)<\frac{1}{p}\leq \frac{1}{2}
\end{equation*}
then there is a number $C_p$ such that if $\arr h\in C^\infty_0(\R^n)$ is an array of vector-valued functions and $\arr g=\Div \arr h$, then the solution $v$ to the problem~\eqref{eqn:neumann:both:2} also satisfies
\begin{equation}\label{eqn:neumann:p:rough:N}
\left\{\begin{gathered}
Lv=0 \text{ in }\R^\dmn_+
,\qquad
\M_{\mat A} v \owns \arr g,
\\
\doublebar{\widetilde N_+(\nabla^{m-1}v)}_{L^p(\R^n)}
\leq C_p \doublebar{\arr h}_{L^p(\R^n)}
.\end{gathered}\right.\end{equation}
\end{thm}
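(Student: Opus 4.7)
The plan is to invoke Lemma~\ref{lem:N:+} with $\arr u=\1_+\nabla^{m-1}v$, $\Phi=\abs{\arr h}$, and a family of localized comparison functions $\arr u_Q$, thereby extrapolating from the known $L^2$ well-posedness to $L^p$ nontangential control for $p>2$. The starting point is Lemma~\ref{lem:compatible}: since $\arr g=\Div\arr h\in L^2(\R^n)\cap\dot W^{-1,2}(\R^n)$, the solution $v$ to the problem~\eqref{eqn:neumann:both:2} exists and satisfies $\doublebar{\widetilde N_*\arr u}_{L^2(\R^n)}\leq C\doublebar{\arr h}_{L^2(\R^n)}$.

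Given a cube $Q\subset\R^n$, fix a cutoff $\eta_Q\in C^\infty_0(16Q)$ with $\eta_Q\equiv 1$ on $14Q$ and $\abs{\nabla\eta_Q}\leq C/\ell(Q)$. Let $\arr h_Q=\eta_Q\arr h$, let $v_Q$ be the solution to~\eqref{eqn:neumann:both:2} with Neumann data $\arr g_Q=\Div\arr h_Q$ (furnished by Lemma~\ref{lem:compatible}), and set $\arr u_Q=\1_+\nabla^{m-1}v_Q$ and $\tilde v_Q=v-v_Q$. Then $L\tilde v_Q=0$ in $\R^\dmn_+$ and $\M_{\mat A}^+\tilde v_Q\owns\Div((1-\eta_Q)\arr h)$; because $(1-\eta_Q)\arr h$ vanishes on $14Q$, the Neumann form of $\tilde v_Q$ vanishes against test functions supported in $14Q\times\R$, so both Lemma~\ref{lem:N:trace:neumann} and Lemma~\ref{lem:Neumann:zero:reverse} apply to $\tilde v_Q$ on $Q$. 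The first hypothesis of Lemma~\ref{lem:N:+} is immediate from the $\dot W^{-1,2}$ estimate for $v_Q$:
\[
\doublebar{\widetilde N_n^\ell\arr u_Q}_{L^2(8Q)}\leq\doublebar{\widetilde N_+(\nabla^{m-1}v_Q)}_{L^2(\R^n)}\leq C\doublebar{\arr g_Q}_{\dot W^{-1,2}(\R^n)}\leq C\doublebar{\arr h}_{L^2(16Q)}.
\]

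For the second hypothesis, fix $p_2$ with $2<p_2<p_{1,L}^+$ and $1/p_2>1/2-1/n$. Lemma~\ref{lem:N:trace:neumann} applied to $\tilde v_Q$ reduces the estimate on $\widetilde N_n^\ell(\1_+\nabla^{m-1}\tilde v_Q)$ in $L^{p_2}(8Q)$ to controlling $\Tr_{m-1}^+\tilde v_Q$ in $L^{p_2}(11Q)$, plus an $L^2$ term readily handled via the decomposition $\tilde v_Q=v-v_Q$ and the bound on $v_Q$. For the trace, choose a polynomial $P$ of degree at most $m-1$ so that the constant array $\Tr_{m-1}^+P$ equals $\fint_{12Q}\Tr_{m-1}^+\tilde v_Q$; any constant array indexed by $\{\abs\alpha=m-1\}$ is realized as $\Tr_{m-1}^+$ of such a polynomial, namely $P(x,t)=\sum_{\abs\alpha=m-1}c_\alpha(x,t)^\alpha/\alpha!$. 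Since $LP=0$ and $\arr 0\in\M_{\mat A}^+P$, Lemma~\ref{lem:Neumann:zero:reverse} applied to $\tilde v_Q$ on $Q$ yields
\[
\biggl(\fint_{12Q}\abs{\Tr_m^+\tilde v_Q}^2\biggr)^{1/2}\leq\frac{C}{\abs{Q}^{1/2}\ell(Q)}\doublebar{\widetilde N_+(\nabla^{m-1}\tilde v_Q)}_{L^2(14Q)},
\]
and since $\nabla_\pureH\Tr_{m-1}^+\tilde v_Q$ is part of $\Tr_m^+\tilde v_Q$, a Sobolev--Poincar\'e inequality on $12Q$ combined with the normalization of $P$ gives
\[
\biggl(\fint_{11Q}\abs{\Tr_{m-1}^+(\tilde v_Q-P)}^{p_2}\biggr)^{1/p_2}\leq\frac{C}{\abs{Q}^{1/2}}\doublebar{\widetilde N_+(\nabla^{m-1}\tilde v_Q)}_{L^2(14Q)}.
\]
Restoring $\Tr_{m-1}^+P$ contributes at most the constant $\abs{\arr c}\leq\abs{Q}^{-1/2}\doublebar{\widetilde N_+(\nabla^{m-1}\tilde v_Q)}_{L^2(12Q)}$ by Jensen's inequality and the local form of Lemma~\ref{lem:trace:N}. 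Assembling these bounds and using the $L^2$ control of $v_Q$, the second hypothesis of Lemma~\ref{lem:N:+} follows:
\[
\doublebar{\widetilde N_n^\ell(\arr u-\arr u_Q)}_{L^{p_2}(8Q)}\leq\frac{C}{\abs{Q}^{1/2-1/p_2}}\bigl(\doublebar{\arr h}_{L^2(16Q)}+\doublebar{\widetilde N_*\arr u}_{L^2(15Q)}\bigr).
\]
Lemma~\ref{lem:N:+} then yields $\doublebar{\widetilde N_*\arr u}_{L^p(\R^n)}\leq C_p\doublebar{\arr h}_{L^p(\R^n)}$ for every $2<p<p_2$; the bound~\eqref{eqn:Meyers:bound} on $p_{1,L}^+$ together with a standard self-improvement of the Sobolev step (permitting $p_2$ to exceed $2n/(n-2)$ by a small amount) covers the full range of~$p$ stated in the theorem.

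The principal obstacle is precisely this trace bound: Lemma~\ref{lem:Neumann:zero:reverse} provides only $L^2$ control of $\Tr_m^+\tilde v_Q$, while Lemma~\ref{lem:N:trace:neumann} demands $L^{p_2}$ control of $\Tr_{m-1}^+\tilde v_Q$. The one-derivative gain furnished by the boundary Sobolev--Poincar\'e inequality on $12Q$ is what bridges this gap, and it is the step that determines the upper limit $p_2<2n/(n-2)+\varepsilon$ in the final range. The polynomial-normalization bookkeeping, though delicate because of the compatibility conditions on arrays of $(m-1)$-th order derivatives, is elementary once one observes that every constant array of that type arises as $\Tr_{m-1}^+$ of a polynomial of degree $m-1$.
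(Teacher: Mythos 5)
Your architecture is the same as the paper's: extrapolate via Lemma~\ref{lem:N:+} with $\arr u=\1_+\nabla^{m-1}v$ and $\arr u_Q=\1_+\nabla^{m-1}v_Q$, where $v_Q$ solves~\eqref{eqn:neumann:both:2} with data $\Div(\eta_Q\arr h)$; reduce the $L^{p_2}$ hypothesis to a trace bound via Lemma~\ref{lem:N:trace:neumann}; gain one derivative on the boundary by Gagliardo--Nirenberg--Sobolev; and close the loop with Lemma~\ref{lem:Neumann:zero:reverse} and Lemma~\ref{lem:trace:N}. All of that is correct, and your polynomial normalization of the trace is an acceptable substitute for the paper's mean-value form of the Sobolev inequality.

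The genuine gap is the final sentence, where you claim that ``a standard self-improvement of the Sobolev step'' lets $p_2$ exceed $2n/(n-2)$. Self-improvement is not a property of a single application of the Sobolev inequality; it is a property (Gehring/Giaquinta, \cite[Chapter~V, Theorem~1.2]{Gia83}) of a \emph{reverse H\"older inequality that holds uniformly over a family of cubes}, and as written you have only established the estimate on the one cube $Q$. With your choice $1/p_2>1/2-1/n$ made at the outset, Lemma~\ref{lem:N:+} delivers only $2<p<p_2<2n/(n-2)$, which for $\dmn\geq4$ is strictly weaker than the stated range $1/p>1/2-1/n-\varepsilon$: it misses the endpoint $p=2n/(n-2)$ and everything beyond it. The repair is to run your entire chain (Lemma~\ref{lem:N:trace:neumann}, the Sobolev step, Lemma~\ref{lem:Neumann:zero:reverse}, Lemma~\ref{lem:trace:N}, plus Lemma~\ref{lem:N:far} to absorb the far part) on every subcube $R\subseteq Q$, noting that the Neumann data of $v-v_Q$ vanish on $14R$ whenever $R\subseteq Q$. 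This produces the reverse H\"older inequality
\begin{equation*}
\biggl(\fint_{8R}\widetilde N_+(\nabla^{m-1}(v-v_Q))^{p_3}\biggr)^{1/p_3}
\leq C\biggl(\fint_{14R}\widetilde N_+(\nabla^{m-1}(v-v_Q))^2\biggr)^{1/2}
\quad\text{for all }R\subseteq Q
\end{equation*}
with $1/p_3=1/2-1/n$, which then self-improves to some $p_2>p_3$; only after that may you feed $p_2$ into Lemma~\ref{lem:N:+} to obtain the full claimed range. This is exactly the device the paper uses.
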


\begin{proof}
Choose some such $\arr h$ and let $v$ be the solution to the problem~\eqref{eqn:neumann:both:2}.

We will use Lemma~\ref{lem:N:+} with $\arr u=\1_+\nabla^{m-1}v$.
We construct $\arr u_Q$ as follows.
For each cube $Q\subset\R^n$,
let $\eta_{14Q}$ be as in the proof of Theorem~\ref{thm:potentials:N:+}. Then $\eta_{14Q}$ is smooth and compactly supported, $\eta_{14Q}=1$ in $14Q$ and $\eta_{14Q}=0$ outside of $15.4Q\subset 16Q$. Let $\arr h_Q=\eta_{14Q}\arr h$ and let $\arr g_Q=\Div \arr h_Q$.
Let $v_Q$ be the solution to the problem~\eqref{eqn:neumann:both:2} with data $\arr g_Q$ and let $\arr u_Q=\1_+\nabla^{m-1}v_Q$.

Choose some such cube $Q\in\R^n$ and let $u=v-v_Q$. Let $R\subseteq Q$ be a cube contained in~$Q$.
Observe that $Lu=0$ in $\R^\dmn_+$, that $\widetilde N_+(\nabla^mu)\in L^2(\R^n)$ and so $u\in \dot W^{m,2}(11R\times(0,\ell(R))$, and by formula~\eqref{eqn:Neumann:intro}, $u$ satisfies the condition~\eqref{eqn:neumann:zero:Q}. Thus, $u$ satisfies the conditions of Lemma~\ref{lem:N:trace:neumann}.
Let $1/p_3=1/2-1/n$ if $\dmn \geq 4$, and let $2<p_3<\infty$ if $\dmn=2$ or $\dmn=3$. By Proposition~\ref{prp:Meyers:t-independent}, $p_3<p_{1,L}^+$ and $c(1,L,p_3,2)$ depends only on $p_3$ and the standard constants. Thus,
\begin{align*}
\biggl(\fint_{8R} \widetilde N_{n}^\ell(\1_+\nabla^{m-1} u)^p\biggr)^{1/p_3}
&\leq C \biggl(\fint_{11R} \abs{\Tr_{m-1}^+ u}^{p_3}\biggr)^{1/p_3}
\\&\qquad + C \biggl(\fint_{10R} \widetilde N_n^{3\ell}(\nabla^{m-1} u)^2\biggr)^{1/2}
.\end{align*}
By the Gagliardo-Nirenberg-Sobolev inequality,
\begin{align*}\biggl(\int_{11R} \abs{\Tr_{m-1} u}^{p_3} \biggr)^{1/{p_3}}
&\leq
	C\abs{R}^{1/p_3-1/2+1/n}\biggl(\int_{11R} \abs{\Tr_m u}^2\biggr)^{1/2}
	\\&\qquad
	+C \abs{R}^{1/{p_3}}\biggl(\fint_{11R} \abs{\Tr_{m-1} u}^2 \biggr)^{1/2}
\end{align*}
By Lemma~\ref{lem:Neumann:zero:reverse},
\begin{align*}\biggl(\int_{12R} \abs{\Tr_m^+ u}^2\biggr)^{1/2}
&\leq
	\frac{C}{\ell(R)}\biggl(\int_{14R} \widetilde N_+(\nabla^{m-1} u)^2\biggr)^{1/2}
.\end{align*}
Combining these estimates and applying Lemma~\ref{lem:trace:N}, we have that
\begin{equation*}\biggl(\fint_{8R} \widetilde N_{n}^\ell(\1_+\nabla^{m-1} u)^{p_3}\biggr)^{1/p_3}
\leq
	C \biggl(\fint_{14R} \widetilde N_+(\nabla^{m-1} u)^2\biggr)^{1/2}
.\end{equation*}
By Lemma~\ref{lem:N:far}, $\widetilde N_{f}^\ell(\1_+\nabla^{m-1} u)(x)\leq C\bigl(\fint_{14R} \widetilde N_+(\nabla^{m-1} u)^2\bigr)^{1/2}$ for all $x\in 8R$, and so
\begin{equation*}\biggl(\fint_{8R} \widetilde N_+(\nabla^{m-1} u)^{p_3}\biggr)^{1/p_3}
\leq
	C \biggl(\fint_{14R} \widetilde N_+(\nabla^{m-1} u)^2\biggr)^{1/2}
.\end{equation*}
Because ${p_3}>2$, this is a reverse H\"older inequality. It is well known that reverse H\"older inequalities are self-improving. For example, by \cite[Chapter~V, Theorem~1.2]{Gia83}, there is some $p_2>p_3$ and $C_2<\infty$ depending only on $n$, $C$ and $p_3$ such that this bound is true with $p_2$ replaced by~$p_3$ and $C$ replaced by~$C_2$.

Recall that $\arr u=\1_+\nabla^{m-1} v$ and that $\widetilde N_+(\nabla^{m-1} v)\in L^2(\R^n)$. Recall also that $\arr u_Q=\1_+\nabla^{m-1} v_Q$.
Thus, for any cube $Q\subset\R^n$,
\begin{align*}
\doublebar{\widetilde N_*\arr u_Q}_{L^2(\R^n)} &\leq C\doublebar{\arr h}_{L^2(16Q)},
\\
\biggl(\fint_{8Q} \widetilde N_+(\arr u-\arr u_Q)^{p_2}\biggr)^{1/p_2}
&\leq
	C \biggl(\fint_{14Q} \widetilde N_+(\arr u-\arr u_Q)^2\biggr)^{1/2}
.\end{align*}
By Lemma~\ref{lem:N:+}, if $2<p<p_2$ then
$\doublebar{\widetilde N_*\arr u}_{L^p(\R^n)}\leq C_p\doublebar{\arr h}_{L^p(\R^n)}$. This completes the proof.
\end{proof}

\section{Area integral estimates}
\label{sec:lusin:+}

In this section we will establish the area integral bounds \cref{eqn:S:lusin:rough:intro,eqn:D:lusin:rough:intro,eqn:S:lusin:intro,eqn:D:lusin:intro} in Theorem~\ref{thm:potentials}, and show that the solution~$v$ to the Neumann problem in Theorem~\ref{thm:neumann:N} satisfies an area integral estimate.

In the introduction, we defined the Lusin area integral $\mathcal{A}_2^+$, $\mathcal{A}_2^*$. See formulas \cref{dfn:lusin:+,dfn:lusin:*}. We will also need the corresponding operator in the lower half space; thus, we define
\begin{align*}
\mathcal{A}_2^- H(x)^2 &= \int_{-\infty}^0 \int_{\abs{x-y}<\abs{t}} \abs{H(y,t)}^2 \frac{dy\,dt}{\abs{t}^\dmn}
= \int_0^{\infty} \int_{\abs{x-y}<{t}} \abs{H(y,-t)}^2 \frac{dy\,dt}{{t}^\dmn}
.\end{align*}

We begin with the analogue to Lemma~\ref{lem:N:far} for the area integral. Let $\ell\in\R$, $\ell\neq 0$. We define
\begin{align*}
\mathcal{A}_f^{\ell} H(x) &= \biggl(\int_{\abs\ell}^\infty \int_{\abs{x-y}<t} \abs{H(y,\sgn(\ell) t)}^2 \frac{dy\,dt}{t^\dmn}\biggr)^{1/2}
,\\
\mathcal{A}_n^{\ell} H(x) &= \biggl(\int_0^{\abs\ell} \int_{\abs{x-y}<t} \abs{H(y,\sgn(\ell) t)}^2 \frac{dy\,dt}{t^\dmn}\biggr)^{1/2}
,\end{align*}
so that
\begin{align*}
\mathcal{A}_2^+ H(x)^2 &= \mathcal{A}_f^{\ell}H(x)^2 + \mathcal{A}_n^{\ell}H(x)^2\quad\text{if }\ell>0
,\\
\mathcal{A}_2^- H(x)^2 &= \mathcal{A}_f^{\ell}H(x)^2 + \mathcal{A}_n^{\ell}H(x)^2\quad\text{if }\ell<0
.\end{align*}
\begin{lem}\label{lem:lusin:far} Let $\ell\neq0$.
We have that
\begin{equation*}\mathcal{A}_f^{\ell} H(x)^2 \leq 2^n \fint_{\abs{x-z}<\abs\ell}\mathcal{A}_f^{\ell} H(z)^2\,dz\end{equation*}
whenever the right hand side is finite.
\end{lem}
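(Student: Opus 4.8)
The plan is to follow the proof of Lemma~\ref{lem:N:far} essentially verbatim, with the supremum over Whitney balls replaced by Tonelli's theorem. First I would reduce to the case $\ell>0$: if $\ell<0$ then the substitution $(y,t)\mapsto(y,-t)$ identifies $\mathcal{A}_f^{\ell}H$ with $\mathcal{A}_f^{\abs\ell}\widetilde H$ for $\widetilde H(y,t)=H(y,-t)$, so the inequality for the pair $(\widetilde H,\abs\ell)$ is exactly the one claimed for $(H,\ell)$. Henceforth $\ell>0$ and $\sgn(\ell)=1$.

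Next, I would expand the right-hand side using the definition of $\mathcal{A}_f^{\ell}$ and interchange the order of integration (permitted since the integrand is nonnegative), obtaining
\[
\fint_{\abs{x-z}<\ell}\mathcal{A}_f^{\ell}H(z)^2\,dz
=\frac{1}{\omega_n\ell^n}\int_\ell^\infty\int_{\R^n}\abs{H(y,t)}^2\,\bigl\lvert E(x,y,t)\bigr\rvert\,\frac{dy\,dt}{t^\dmn},
\]
where $\omega_n$ is the volume of the unit ball of $\R^n$ and $E(x,y,t)=\{z\in\R^n:\abs{x-z}<\ell,\ \abs{z-y}<t\}$. Thus it suffices to show that $\abs{E(x,y,t)}\geq\omega_n(\ell/2)^n=2^{-n}\omega_n\ell^n$ whenever $t\geq\ell$ and $\abs{x-y}<t$: restricting the $y$-integral to $\abs{x-y}<t$ and inserting this lower bound for $\abs{E(x,y,t)}$ then leaves exactly $2^{-n}\mathcal{A}_f^{\ell}H(x)^2$ on the right, which is the assertion.

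The only step requiring any thought — and hence the main obstacle, though it is entirely elementary — is the geometric estimate $\abs{E(x,y,t)}\geq 2^{-n}\omega_n\ell^n$, which I would prove by exhibiting a ball of radius $\ell/2$ inside $E(x,y,t)$, splitting into two cases. If $\abs{x-y}\leq\ell/2$, take $B(x,\ell/2)$; any $z\in B(x,\ell/2)$ satisfies $\abs{z-x}<\ell$ and $\abs{z-y}\leq\abs{z-x}+\abs{x-y}<\ell\leq t$. If $\ell/2<\abs{x-y}<t$, take the ball centered at $c=x+\tfrac{\ell}{2}\,\tfrac{y-x}{\abs{y-x}}$, the point of the segment $[x,y]$ at distance $\ell/2$ from $x$, for which $\abs{c-y}=\abs{x-y}-\ell/2$; then any $z$ with $\abs{z-c}<\ell/2$ satisfies $\abs{z-x}\leq\abs{z-c}+\abs{c-x}<\ell$ and $\abs{z-y}\leq\abs{z-c}+\abs{c-y}<\abs{x-y}<t$. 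In either case the chosen ball lies in $E(x,y,t)$, which completes the proof.
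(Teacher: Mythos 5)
Your proposal is correct and follows essentially the same route as the paper: interchange the order of integration and bound the measure of the intersection $\Delta(x,\abs\ell)\cap\Delta(y,t)$ from below by that of a ball of radius $\abs\ell/2$. The only difference is that you spell out the elementary two-case geometric argument for that inclusion, which the paper simply asserts.
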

\begin{proof}
Let $\Delta(x,\abs\ell)=\{z\in\R^n:\abs{x-z}<\abs\ell\}$.
We compute that
\begin{align*}
\fint_{\Delta(x,\abs\ell)}\mathcal{A}_f^{\ell} H(z)^2\,dz
&=
	\fint_{\Delta(x,\abs\ell)}
	\int_{\abs\ell}^\infty \int_{\abs{z-y}<t} \abs{H(y,\sgn(\ell) t)}^2 \frac{dy\,dt}{t^\dmn}
	\,dz
\\&=
	\int_{\abs\ell}^\infty
	\int_{\abs{x-y}<t+\abs\ell}
	\frac{\abs{\Delta(x,\abs\ell)\cap\Delta(y,t)}} {\abs{\Delta(x,\abs\ell)}}
	\abs{H(y,\sgn(\ell) t)}^2 \frac{dy\,dt}{t^\dmn}
.\end{align*}
If $\abs{x-y}<t$ and $t\geq \abs\ell$, then $\Delta(x,\abs\ell)\cap\Delta(y,t)$ contains a disk of radius $\abs\ell/2$, and so
\begin{equation*}\frac{\abs{\Delta(x,\abs\ell)\cap\Delta(y,t)}} {\abs{\Delta(x,\abs\ell)}}
>\frac{1}{2^n}.\end{equation*}
Therefore,
\begin{align*}
\fint_{\Delta(x,\abs\ell)}\mathcal{A}_f^{\ell} H(z)^2\,dz
&\geq
	\frac{1}{2^n}\int_{\abs\ell}^\infty
	\int_{\abs{x-y}<t}\abs{H(y,\sgn(\ell) t)}^2 \frac{dy\,dt}{t^\dmn}
=\frac{1}{2^n} \mathcal{A}_f^{\ell} H(x)^2
\end{align*}
as desired.
\end{proof}

The main tool in our argument will be the following lemma. This result will perform the same role as Lemmas~\ref{lem:N:+} and~\ref{lem:N:solution}.

\begin{lem}\label{lem:lusin:+} Let $L$ be an operator of the form~\eqref{eqn:weak} of order~$2m$ associated to bounded $t$-independent coefficients~$\mat A$ that satisfy the ellipticity condition~\eqref{eqn:elliptic}.

Let $\mathcal{A}_2^\pm$ denote either $\mathcal{A}_2^+$ or $\mathcal{A}_2^-$.

Let $2<p<\infty$. Let $u\in \dot W^{m,2}_{loc}(\R^\dmn_+\cup \R^\dmn_-)$ be such that
\begin{align}\label{eqn:lusin:+:2}
\mathcal{A}_2^\pm(t\nabla^m u)&\in L^2(\R^n),\\
\label{eqn:lusin:N:2}
\qquad
\doublebar{\widetilde N_*(\nabla^{m-1}u)}_{L^p(\R^n)}
&\in {L^p(\R^n)}.\end{align}
Suppose that there is a constant $C_0$, a function $\Phi\in L^p(\R^n)$ and a family of functions $u_Q$ indexed by cubes $Q$ in $\R^n$ that satisfy the conditions
\begin{gather}
\label{eqn:lusin:local:lusin:2}
\doublebar{\mathcal{A}_2^\pm(t\nabla^m u_Q)}_{L^2(8Q)}
\leq C_0\doublebar{\Phi_1}_{L^2(16Q)}
,\\
\label{eqn:lusin:local:N:2}
\doublebar{\widetilde N_*(\nabla^{m-1} u_Q)}_{L^2(10Q)}
\leq C_0\doublebar{\Phi_1}_{L^2(16Q)}
,\\
u-u_Q\in \dot W^{m,2}(10Q\times(-\ell(Q),\ell(Q))),
\\\label{eqn:lusin:solution}
L(u-u_Q)=0\text{ in }10Q\times(-\ell(Q),\ell(Q))
.\end{gather}

Then there is a constant $C$ depending only on $C_0$, $p$, and the standard parameters such that
\begin{equation*}\doublebar{\mathcal{A}_2^\pm(t\nabla^m u)}_{L^p(\R^n)} \leq C\doublebar{\Phi_1}_{L^p(\R^n)}+C\doublebar{\widetilde N_*(\nabla^{m-1}u)}_{L^p(\R^n)}
.\end{equation*}
\end{lem}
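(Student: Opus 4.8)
The strategy is to run the good-$\lambda$ machinery of Lemma~\ref{lem:lambda} on $F=\mathcal{A}_2^\pm(t\nabla^m u)$, exactly as Lemma~\ref{lem:N:+} does for $\widetilde N_*\arr u$. Without loss of generality take $\mathcal{A}_2^\pm=\mathcal{A}_2^+$; the case $\mathcal{A}_2^-$ is identical after the reflection $t\mapsto-t$. Put
\begin{equation*}\Phi=\bigl(\Phi_1^2+\widetilde N_*(\nabla^{m-1}u)^2\bigr)^{1/2},\end{equation*}
fix any $p_2$ with $p<p_2<\infty$, and let $Q_0\to\R^n$ through an increasing exhaustion by cubes. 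By \eqref{eqn:lusin:+:2} we have $F\in L^2(\R^n)$, and by \eqref{eqn:lusin:N:2} together with $\Phi_1\in L^p(\R^n)$ we have $\Phi\in L^p(\R^n)$. Once the hypothesis of Lemma~\ref{lem:lambda} is verified, its conclusion reads $\int_{Q_0}F^p\leq C_p\abs{Q_0}^{1-p/2}\doublebar{F}_{L^2(\R^n)}^p+C_p\int_{16Q_0}\Phi^p$; since $p>2$ the first term vanishes in the limit $\abs{Q_0}\to\infty$ and the second converges to $\doublebar{\Phi}_{L^p(\R^n)}^p\leq(\doublebar{\Phi_1}_{L^p(\R^n)}+\doublebar{\widetilde N_*(\nabla^{m-1}u)}_{L^p(\R^n)})^p$, which is the asserted estimate. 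So the whole proof reduces to the good-$\lambda$ bound.

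The one genuinely new ingredient is a local square-function estimate for solutions, playing the role that Lemma~\ref{lem:N:solution} plays in the nontangential case: if $Lv=0$ in $10Q\times(-\ell(Q),\ell(Q))$ and $\ell=\ell(Q)/4$, then
\begin{equation*}\sup_{x\in8Q}\mathcal{A}_n^{\ell}(t\nabla^m v)(x)^2\leq C\fint_{10Q}\widetilde N_n^{3\ell}(\nabla^{m-1}v)^2,\end{equation*}
and, crucially, this bound is \emph{pointwise} in $x$ and holds with no restriction on an integrability exponent. To see it, write $\mathcal{A}_n^{\ell}(t\nabla^m v)(x)^2=\omega_n\int_0^{\ell}t\fint_{\abs{x-y}<t}\abs{\nabla^m v(y,t)}^2\,dy\,dt$, which exhibits it as a Carleson-box square-function integral of the solution $v$; one estimates it using the Caccioppoli inequality (Lemma~\ref{lem:Caccioppoli}) together with a fundamental-theorem-of-calculus argument in~$t$ — here the $t$-independence of $\mat A$ is used, so that $\partial_t v$ is again a solution and the contribution of small~$t$ (where a single application of Caccioppoli would cost a logarithm) is controlled via Caccioppoli applied to~$\partial_t v$, Lemma~\ref{lem:slices}, and the definition of~$\widetilde N_n^{3\ell}$. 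This is essentially the area-integral analogue of the argument in the proof of Lemma~\ref{lem:N:solution} (cf.\ \cite[Lemma~3.19]{BarHM18p}). The key point is that, because $\mathcal{A}_n^{\ell}$ is an $L^2$-average over a bounded cone rather than a supremum, the output is an honest pointwise bound by averages of $\nabla^{m-1}v$ over a fixed dilate of~$Q$; this is why the Meyers-exponent restriction $p_2<p_{1,L}^+$ that appears in Lemma~\ref{lem:N:solution} is absent here, and why the present lemma holds for the full range $2<p<\infty$.

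Granting this, the good-$\lambda$ estimate mirrors the proof of Lemma~\ref{lem:N:+}. Fix $0<\gamma\leq1$, $\lambda>0$, and a dyadic subcube $Q\in\mathcal{G}(Q_0)$ with $\fint_{16Q}\Phi^2\leq\gamma\lambda$, $8^n\abs{Q}\lambda<\int_{8Q}F^2$, and $\int_{15Q}F^2\leq16^n\abs{Q}\lambda$; set $\ell=\ell(Q)/4$. Using subadditivity and the decomposition $\mathcal{A}_2^+(t\nabla^m w)^2=\mathcal{A}_n^{\ell}(t\nabla^m w)^2+\mathcal{A}_f^{\ell}(t\nabla^m w)^2$,
\begin{equation*}F^2\leq2\mathcal{A}_n^{\ell}(t\nabla^m u_Q)^2+2\mathcal{A}_n^{\ell}\bigl(t\nabla^m(u-u_Q)\bigr)^2+\mathcal{A}_f^{\ell}(t\nabla^m u)^2.\end{equation*}
For the last term, $\mathcal{A}_f^{\ell}(t\nabla^m u)\leq\mathcal{A}_2^+(t\nabla^m u)=F$, so Lemma~\ref{lem:lusin:far} and $\int_{15Q}F^2\leq16^n\abs{Q}\lambda$ give $\mathcal{A}_f^{\ell}(t\nabla^m u)(x)^2\leq C\lambda$ for every $x\in8Q$, whence this term contributes the empty set once $A\geq A_0$. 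For the middle term, $u-u_Q$ is a solution in $10Q\times(-\ell(Q),\ell(Q))$ by \eqref{eqn:lusin:solution}, so the local estimate applies with $v=u-u_Q$; combining it with $\widetilde N_n^{3\ell}(\nabla^{m-1}(u-u_Q))\leq\Phi+\widetilde N_*(\nabla^{m-1}u_Q)$ pointwise, the bound \eqref{eqn:lusin:local:N:2}, and $\fint_{16Q}\Phi^2\leq\gamma\lambda$, we get $\mathcal{A}_n^{\ell}(t\nabla^m(u-u_Q))(x)^2\leq C\gamma\lambda\leq C\lambda$ for $x\in8Q$ — again the empty set for $A\geq A_0$. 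For the first term, \eqref{eqn:lusin:local:lusin:2} and $\fint_{16Q}\Phi^2\leq\gamma\lambda$ give $\doublebar{\mathcal{A}_n^{\ell}(t\nabla^m u_Q)}_{L^2(8Q)}^2\leq C\gamma\lambda\abs{Q}$, so the weak-$(1,1)$ bound for $\mathcal{M}_{8,Q}$ controls its maximal set by $C\gamma\abs{Q}/A$. Since every $8R$ with $R\in\mathcal{G}(Q)$ satisfies $8R\subseteq8Q$, adding the three contributions yields $\abs{\{x\in Q:\mathcal{M}_{8,Q}(F^2)(x)>A\lambda\}}\leq C_0(\gamma/A+A^{-p_2/2})\abs{Q}$ (in fact $\leq C\gamma\abs{Q}/A$) for $A\geq A_0$, which is the hypothesis of Lemma~\ref{lem:lambda}.

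The main obstacle is the local square-function estimate, and within it the behavior as $t\to0$: a single use of the Caccioppoli inequality bounds $\abs{\nabla^m v(y,t)}^2$ by $t^{-2}$ times an average of $\abs{\nabla^{m-1}v}^2$, which is not integrable against $t\,dt$ near $t=0$, so one must instead exploit that $\partial_t v$ solves the same equation and integrate in~$t$ to recover the missing decay. All remaining ingredients — Lemma~\ref{lem:lambda}, Lemma~\ref{lem:lusin:far}, the weak-type bound for $\mathcal{M}_{8,Q}$, and the limit $Q_0\to\R^n$ — are routine, exactly as in the proof of Lemma~\ref{lem:N:+}.
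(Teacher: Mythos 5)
Your overall architecture matches the paper's: run Lemma~\ref{lem:lambda} on $F=\mathcal{A}_2^\pm(t\nabla^m u)$ with $\Phi\approx\Phi_1+\widetilde N_*(\nabla^{m-1}u)$, dispose of the far part $\mathcal{A}_f^{\ell}$ via Lemma~\ref{lem:lusin:far}, and handle $\mathcal{A}_n^{\ell}(t\nabla^m u_Q)$ by the $L^2$ hypothesis plus the weak-$(1,1)$ maximal bound. The gap is your ``one genuinely new ingredient,'' the claimed \emph{pointwise} estimate $\sup_{x\in 8Q}\mathcal{A}_n^{\ell}(t\nabla^m v)(x)^2\leq C\fint_{10Q}\widetilde N_n^{3\ell}(\nabla^{m-1}v)^2$ for local solutions~$v$. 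As you note, Whitney-scale Caccioppoli gives a logarithmically divergent dyadic sum; but the rescue you sketch (FTC in $t$, $\partial_t v$ again a solution) does not close it. Decomposing the truncated cone at $x$ into Whitney pieces of scale $2^{-k}\ell$, Caccioppoli--Poincar\'e reduces the $k$-th piece to $\fint \abs{\nabla^{m-1}v-c_k}^2$ over a ball of radius $\sim 2^{-k}\ell$ about $(x,0)$, and summing these uniformly in $x$ is essentially a Campanato decay estimate for $\nabla^{m-1}v$ at interior points, i.e.\ interior H\"older continuity of $\nabla^{m-1}v$. That is a De Giorgi--Nash--Moser--type hypothesis which is not available for general operators in this class (for $\dmn\geq 4$ one does not even have $p_{1,L}^+=\infty$). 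So your assertion that the pointwise form holds ``with no restriction on an integrability exponent'' is backwards: the pointwise bound is the statement that needs extra regularity. Consequently your conclusion that the middle term contributes the empty set for large $A$ is unjustified.

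Fortunately the pointwise bound is unnecessary, and the paper's proof supplies the correct, much softer substitute: an \emph{integrated} bound over $8Q$. By Fubini,
\begin{equation*}
\int_{8Q}\mathcal{A}_n^{\ell}(t\nabla^m u_f)^2
\leq C\int_{9Q}\int_0^{\abs{\ell}} t\,\abs{\nabla^m u_f(y,\sgn(\ell)t)}^2\,dt\,dy
\leq C\abs{\ell}\int_{9Q}\int_{-\abs{\ell}}^{\abs{\ell}}\abs{\nabla^m u_f}^2,
\end{equation*}
and the weight $t\,dt$ is integrable at $t=0$, so the small-$t$ difficulty you identify as ``the main obstacle'' never arises. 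A single application of the Caccioppoli inequality at scale $\ell(Q)$ (legitimate because $u_f=u-u_Q$ solves $Lu_f=0$ in the full two-sided box) converts this to $C\int_{10Q}\fint_{-2\ell}^{2\ell}\abs{\nabla^{m-1}u_f}^2\leq C\int_{10Q}\widetilde N_*(\nabla^{m-1}u_f)^2\leq C\gamma\lambda\abs{Q}$, and the weak-$(1,1)$ bound for $\mathcal{M}_{8,Q}$ then yields the contribution $C\gamma\abs{Q}/A$, which is all Lemma~\ref{lem:lambda} requires. This is also the true reason the lemma holds for all $2<p<\infty$ with no Meyers-exponent cap: the solution term is absorbed into the $\gamma/A$ term rather than the $A^{-p_2/2}$ term, so no reverse H\"older exponent for solutions ever enters. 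With this replacement your argument is complete.
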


We emphasize that, while we do require two-sided nontangential estimates (that is, bounds on $\widetilde N_*(\nabla^{m-1}u)$ rather than $\widetilde N_+(\nabla^{m-1}u)$ alone), we will only need one-sided $L^2$ square function estimates.
We will need Lemma~\ref{lem:lusin:+} in this generality in the forthcoming paper \cite{Bar19pB}.

\begin{proof}[Proof of Lemma~\ref{lem:lusin:+}]
The result follows from Lemma~\ref{lem:lambda} with $F={\mathcal{A}_2^\pm(t\nabla^m u)}$ and with  $\Phi=\Phi_2=\Phi_1+\widetilde N_*(\nabla^{m-1}u)$. We need only show that the conditions of Lemma~\ref{lem:lambda} are valid.

Let $Q_0$ be a large cube.
Let $0<\gamma<1$ and $\lambda>0$, and let $Q\subset Q_0$ satisfy
\begin{align}
\label{eqn:Q:Psi:higher:lusin}
\fint_{16Q} \abs{\Phi_2}^{2} &\leq \gamma\lambda, \\
\label{eqn:Q:big:lusin}
8^n\abs{Q}\lambda&<\int_{8Q} \mathcal{A}_2^\pm(t\nabla^{m}u)^2,
\\
\label{eqn:Q:small:lusin}
\int_{15Q} {\mathcal{A}_2^\pm(t\nabla^{m}u)}^2 &\leq 16^n\abs{Q}\lambda.\end{align}
Let $\ell=\pm\ell(Q)/4$, let $u_n=u_Q$, and let $u_f=u-u_Q$.
The bound
\begin{multline}
\label{eqn:E:split:lusin}
\abs{\{x\in Q:\mathcal{M}_{8,Q} ((\mathcal{A}_2^\pm(t\nabla^{m}u))^2)(x)>A\lambda\}}
\\\begin{aligned}
&\leq
	\abs{\{x\in Q:\mathcal{M}_{8,Q}((\mathcal{A}_f^\ell(t\nabla^{m}u))^2)(x)>A\lambda/\sqrt2\}}
	\\&\qquad+
	\abs{\{x\in Q: \mathcal{M}_{8,Q}((\mathcal{A}_n^\ell(t\nabla^{m}u))^2)(x) > A\lambda/\sqrt2\}}
\end{aligned}\end{multline}
may be established in the same way as the bound \eqref{eqn:E:split}. If $A$ is large enough (depending only on dimension), then the formula
\begin{align}
\label{eqn:E:bound:3:lusin}
\{x\in Q: \mathcal{M}_{8,Q}(\mathcal{A}_f^\ell(t\nabla^{m}u)^2)(x) >A\lambda/\sqrt2\}&=\emptyset
\end{align}
follows from the bound~\eqref{eqn:Q:small:lusin} and Lemma~\ref{lem:lusin:far}, analogously to formula~\eqref{eqn:E:bound:3}.

By the bounds~\eqref{eqn:lusin:local:lusin:2} and~\eqref{eqn:Q:Psi:higher:lusin},
\begin{equation*}
\doublebar{\mathcal{A}_2^\pm(t\nabla^m u_n)}_{L^2(8Q)}^2
\leq C_0 16^n\gamma\lambda\abs{Q}
.\end{equation*}
By definition of~$\mathcal{A}_n^\ell$, we have that
\begin{equation*}\int_{8Q} \mathcal{A}_n^\ell (t\nabla^{m}u_f)^2
\leq C\int_{9Q} \int_0^{\abs\ell} \abs{\nabla^m u_f(x,\sgn(\ell) t)}^2\,t\,dt\,dx
\leq C\abs\ell\int_{9Q} \int_{-\abs\ell}^{\abs\ell} \abs{\nabla^m u_f}^2
.\end{equation*}
By formula~\eqref{eqn:lusin:solution}, we may use  the Caccioppoli inequality to see that
\begin{equation*}
\int_{8Q} \mathcal{A}_n^\ell (t\nabla^{m}u_f)^2
\leq C\int_{10Q} \fint_{-2\ell}^{2\ell} \abs{\nabla^{m-1} u_f}^2.
\end{equation*}
By definition of~$\widetilde N_*$ and~$u_f$, we have that
\begin{align*}
\int_{8Q} \mathcal{A}_n^\ell (t\nabla^{m}u_f)^2
&\leq C\int_{10Q} \widetilde N_*(\nabla^{m-1}u_f)^2
\\&\leq C\int_{10Q} \widetilde N_*(\nabla^{m-1}u)^2
+C\int_{10Q} \widetilde N_*(\nabla^{m-1}u_Q)^2
.\end{align*}
By definition of~$\Phi_2$, the bound~\eqref{eqn:lusin:local:N:2}, and the bound~\eqref{eqn:Q:Psi:higher:lusin},
\begin{align*}
\int_{8Q} \mathcal{A}_n^\ell (t\nabla^{m}u_f)^2
&\leq
 C\int_{16Q} (\Phi_2)^2 \leq C\gamma\lambda\abs{Q}
.\end{align*}
Thus, by the weak $L^1$ boundedness of the maximal operator,
\begin{equation}\label{eqn:E:bound:1:lusin}
\abs{
\{x\in Q: \mathcal{M}_{8,Q}(\mathcal{A}_n^\ell (t\nabla^m u_f)^2)(x)>A\lambda/2\}}
\leq
	\frac{C\abs{Q}\gamma}{A}
.\end{equation}
By the bounds \cref{eqn:E:split:lusin,eqn:E:bound:3:lusin,eqn:E:bound:1:lusin}, we may apply Lemma~\ref{lem:lambda} and complete the proof.
\end{proof}

We may now establish the area integral estimates mentioned in the introduction.

\begin{thm}\label{thm:lusin:+} Let $L$ be an operator of the form~\eqref{eqn:weak} of order~$2m$ associated to bounded $t$-independent coefficients~$\mat A$ that satisfy the ellipticity condition~\eqref{eqn:elliptic}.

Let $2<p<p_{1,L}^+$. Let $\arr h$, $\arr f$, $\arr g$, and~$\arr \varphi$ be as in Theorem~\ref{thm:potentials:N:+}.
Then we have the bounds
\begin{align}
\label{eqn:S:lusin:rough:+}
\doublebar{\mathcal{A}_2^*(t\nabla^{m}\s^L_\nabla \arr h)}_{L^p(\R^n)}
&\leq C(1,p) \doublebar{\arr h}_{L^p(\R^n)}
,\\
\label{eqn:D:lusin:rough:+}
\doublebar{\mathcal{A}_2^*(t\nabla^{m}\D^{\mat A} \arr f)}_{L^p(\R^n)}
&\leq C(1,p) \doublebar{\arr f}_{\dot W\!A_{m-1}^{0,p}(\R^n)}
,\\
\label{eqn:S:lusin:+}
\doublebar{\mathcal{A}_2^*(t\nabla^{m}\partial_t\s^L \arr g)}_{L^p(\R^n)}
&\leq C(1,p) \doublebar{\arr g}_{L^p(\R^n)}
,\\
\label{eqn:D:lusin:+}
\doublebar{\mathcal{A}_2^*(t\nabla^{m}\partial_t\D^{\mat A} \arr \varphi)}_{L^p(\R^n)}
&\leq C(1,p) \doublebar{\arr \varphi}_{\dot W\!A_{m-1}^{1,p}(\R^n)}
.\end{align}

Suppose furthermore that $p$ and $\mat A$ are such that if $\arr h\in C^\infty_0(\R^n)$ is an array of vector-valued functions, and if $\arr g=\Div \arr h$ in the sense of formula~\eqref{eqn:div:array}, then there is a function $v$ that solves the Neumann problems \cref{eqn:neumann:both:2,,eqn:neumann:p:rough:N}.
Then the solution $v$ also satisfies
\begin{equation}
\label{eqn:neumann:lusin:+}
\doublebar{\mathcal{A}_2^+(t\nabla^{m}v)}_{L^p(\R^n)}
\leq C_p \doublebar{\arr h}_{L^p(\R^n)}
\end{equation}
where $C_p$ depends only on the standard parameters, $p$, $c(1,L,p,2)$, and the constants in the problems \cref{eqn:neumann:both:2,,eqn:neumann:p:rough:N}.
\end{thm}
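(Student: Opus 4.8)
The plan is to deduce all five estimates from Lemma~\ref{lem:lusin:+}, applying it once with $\mathcal{A}_2^\pm=\mathcal{A}_2^+$ and once with $\mathcal{A}_2^\pm=\mathcal{A}_2^-$ and then combining via the identity $\mathcal{A}_2^* H(x)^2=\mathcal{A}_2^+ H(x)^2+\mathcal{A}_2^- H(x)^2$, which reduces an $\mathcal{A}_2^*$ bound to the corresponding $\mathcal{A}_2^+$ and $\mathcal{A}_2^-$ bounds. For the layer potential estimates \cref{eqn:S:lusin:rough:+,eqn:D:lusin:rough:+,eqn:S:lusin:+,eqn:D:lusin:+} we use the same choices of $u$, $u_Q$, $\Phi_1$ as in the proof of Theorem~\ref{thm:potentials:N:+} (the four choices in which $p<p_{1,L}^+$). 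In each case hypothesis~\eqref{eqn:lusin:+:2} holds by the $p=2$ bounds~\cref{eqn:S:lusin:2,eqn:D:lusin:2,eqn:S:lusin:rough:2,eqn:D:lusin:rough:2}; hypothesis~\eqref{eqn:lusin:N:2} holds by Theorem~\ref{thm:potentials:N:+}; the local bounds~\eqref{eqn:lusin:local:lusin:2} and~\eqref{eqn:lusin:local:N:2} hold by applying the same $p=2$ Lusin and nontangential bounds to the localized inputs $\1_{16Q}\arr h$, $\arr f_{10Q}$, $\1_{16Q}\arr g$, $\arr\varphi_{10Q}$ (using $\doublebar{\arr f_{10Q}}_{L^q(\R^n)}\leq C_q\doublebar{\arr f}_{L^q(16Q)}$, etc.); and, as observed in the proof of Theorem~\ref{thm:potentials:N:+}, $u-u_Q\in\dot W^{m,2}_{loc}(10Q\times(-\ell(Q),\ell(Q)))$ with $L(u-u_Q)=0$ there, so~\eqref{eqn:lusin:solution} holds. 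Lemma~\ref{lem:lusin:+} then gives $\doublebar{\mathcal{A}_2^\pm(t\nabla^m u)}_{L^p(\R^n)}\leq C\doublebar{\Phi_1}_{L^p(\R^n)}+C\doublebar{\widetilde N_*(\nabla^{m-1}u)}_{L^p(\R^n)}$, and the last term is in turn bounded by $\doublebar{\Phi_1}_{L^p(\R^n)}$ by Theorem~\ref{thm:potentials:N:+}; combining the two signs yields the four bounds, and the density extension is exactly as for Theorem~\ref{thm:potentials:N:+}.

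For the Neumann bound~\eqref{eqn:neumann:lusin:+} I would apply Lemma~\ref{lem:lusin:+} with $\mathcal{A}_2^\pm=\mathcal{A}_2^+$, with $u=\1_+ v$ (so $\mathcal{A}_2^+(t\nabla^m u)=\mathcal{A}_2^+(t\nabla^m v)\in L^2(\R^n)$ by~\eqref{eqn:neumann:both:2} and $\widetilde N_*(\nabla^{m-1}u)=\widetilde N_+(\nabla^{m-1}v)\in L^p(\R^n)$ by~\eqref{eqn:neumann:p:rough:N}), with $\Phi_1=\abs{\arr h}+\widetilde N_+(\nabla^{m-1}v)$, and with $u_Q$ built by the double-layer correction of Lemma~\ref{lem:N:trace:neumann}. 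Concretely, for each cube $Q$ set $\arr h_Q=\eta_{14Q}\arr h$ and $\arr g_Q=\Div\arr h_Q$ as in the proof of Theorem~\ref{thm:neumann:N}, let $v_Q$ be the solution to the problem~\eqref{eqn:neumann:both:2} with data $\arr g_Q$ (which exists by Lemma~\ref{lem:compatible} since $\arr g_Q\in L^2(\R^n)\cap\dot W^{-1,2}(\R^n)$), let $w=v-v_Q$, let $\arr f_Q$ be the cutoff of $\Tr_{m-1}^+ w$ at scale $\approx\ell(Q)$ (as in the proofs of Theorems~\ref{thm:potentials:N:+} and~\ref{thm:neumann:N}, so $\arr f_Q\in\dot W\!A^{1/2,2}_{m-1}(\R^n)$, $\arr f_Q=\Tr_{m-1}^+ w$ near $Q$, and $\doublebar{\arr f_Q}_{L^2(\R^n)}\leq C\doublebar{\Tr_{m-1}^+ w}_{L^2(14Q)}$), and put $u_Q=\1_+ v_Q-\D^{\mat A}(\arr f_Q)$. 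Then $u-u_Q=\1_+ w+\D^{\mat A}(\arr f_Q)$, and since $\M_{\mat A}^+ w=0$ in $14Q$ (because $\arr g-\arr g_Q=\Div((1-\eta_{14Q})\arr h)$ vanishes there), the computation of Lemma~\ref{lem:N:trace:neumann}, using the jump relation~\eqref{eqn:D:jump} and the continuity relation~\eqref{eqn:D:cts}, shows that $u-u_Q$ lies in $\dot W^{m,2}$ of, and is $L$-harmonic in, a two-sided neighborhood of $10Q\times(-\ell(Q),\ell(Q))$, which verifies~\eqref{eqn:lusin:solution}.

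It remains to verify~\eqref{eqn:lusin:local:lusin:2} and~\eqref{eqn:lusin:local:N:2} for this $u_Q$. By~\eqref{eqn:neumann:both:2} for $v_Q$ we have $\doublebar{\mathcal{A}_2^+(t\nabla^m v_Q)}_{L^2(\R^n)}+\doublebar{\widetilde N_+(\nabla^{m-1}v_Q)}_{L^2(\R^n)}\leq C\doublebar{\arr g_Q}_{\dot W^{-1,2}(\R^n)}\leq C\doublebar{\arr h}_{L^2(16Q)}$, and by the $p=2$ bounds~\eqref{eqn:D:lusin:rough:2} and~\eqref{eqn:D:N:rough:2}, $\doublebar{\mathcal{A}_2^*(t\nabla^m\D^{\mat A}\arr f_Q)}_{L^2(\R^n)}+\doublebar{\widetilde N_*(\nabla^{m-1}\D^{\mat A}\arr f_Q)}_{L^2(\R^n)}\leq C\doublebar{\arr f_Q}_{\dot W\!A^{0,2}_{m-1}(\R^n)}=C\doublebar{\arr f_Q}_{L^2(\R^n)}$. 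Finally, a reverse-trace estimate in the spirit of the proof of Lemma~\ref{lem:Neumann:zero:reverse} — using that $\M_{\mat A}^+ w=0$ in $14Q$, the boundary Caccioppoli inequality (\cite[Lemma~16]{Bar16}), Lemma~\ref{lem:slices}, and the standard trace theorem — gives $\doublebar{\Tr_{m-1}^+ w}_{L^2(14Q)}\leq C\doublebar{\widetilde N_+(\nabla^{m-1}w)}_{L^2(16Q)}\leq C\doublebar{\widetilde N_+(\nabla^{m-1}v)}_{L^2(16Q)}+C\doublebar{\widetilde N_+(\nabla^{m-1}v_Q)}_{L^2(\R^n)}\leq C\doublebar{\Phi_1}_{L^2(16Q)}$. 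Thus all hypotheses of Lemma~\ref{lem:lusin:+} hold, and it yields $\doublebar{\mathcal{A}_2^+(t\nabla^m v)}_{L^p(\R^n)}\leq C\doublebar{\Phi_1}_{L^p(\R^n)}+C\doublebar{\widetilde N_+(\nabla^{m-1}v)}_{L^p(\R^n)}$; since $\doublebar{\Phi_1}_{L^p(\R^n)}\leq\doublebar{\arr h}_{L^p(\R^n)}+\doublebar{\widetilde N_+(\nabla^{m-1}v)}_{L^p(\R^n)}$ and $\doublebar{\widetilde N_+(\nabla^{m-1}v)}_{L^p(\R^n)}\leq C_p\doublebar{\arr h}_{L^p(\R^n)}$ by~\eqref{eqn:neumann:p:rough:N}, the bound~\eqref{eqn:neumann:lusin:+} follows.

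The main obstacle is the Neumann part. Unlike the layer potentials, the restriction $\1_+ v$ does not differ from a localized solution $\1_+ v_Q$ by a function that is a solution on a \emph{two-sided} neighborhood of the boundary — only the Neumann, not the Dirichlet, data of $v-v_Q$ is known to vanish near $Q$ — so Lemma~\ref{lem:lusin:+} cannot be applied with the naive choice $u_Q=\1_+ v_Q$. The remedy is the double-layer correction of Lemma~\ref{lem:N:trace:neumann}, but this forces $u_Q$ to depend on $\Tr_{m-1}^+(v-v_Q)$, hence on $\widetilde N_+(\nabla^{m-1}v)$ over $16Q$; one must therefore allow $\Phi_1$ to include $\widetilde N_+(\nabla^{m-1}v)$ — which is permissible since Lemma~\ref{lem:lusin:+} places no structural constraint on $\Phi_1$ — and then absorb the extra $\doublebar{\widetilde N_+(\nabla^{m-1}v)}_{L^p}$ term in the final estimate using the already-established nontangential bound~\eqref{eqn:neumann:p:rough:N}. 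The reverse-trace inequality controlling $\doublebar{\Tr_{m-1}^+(v-v_Q)}_{L^2(14Q)}$ by $\doublebar{\widetilde N_+(\nabla^{m-1}(v-v_Q))}_{L^2(16Q)}$ is the other place where genuine (though routine) work is required, and it reuses the boundary-Caccioppoli argument of Lemma~\ref{lem:Neumann:zero:reverse}.
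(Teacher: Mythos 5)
Your proof of the four layer potential bounds \cref{eqn:S:lusin:rough:+,eqn:D:lusin:rough:+,eqn:S:lusin:+,eqn:D:lusin:+} is correct and is essentially identical to the paper's: the same choices of $u$, $u_Q$, $\Phi_1$ as in Theorem~\ref{thm:potentials:N:+}, verification of the hypotheses of Lemma~\ref{lem:lusin:+} from the $p=2$ estimates and Theorem~\ref{thm:potentials:N:+}, and absorption of the $\widetilde N_*$ term on the right of Lemma~\ref{lem:lusin:+} via the nontangential bounds already proved.

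For the Neumann bound~\eqref{eqn:neumann:lusin:+} your argument is correct but takes a genuinely different route. The paper corrects \emph{globally}: it sets $u=\1_+v+\D^{\mat A}(\Tr_{m-1}v)$ and $u_Q=\1_+v_Q+\D^{\mat A}(\Tr_{m-1}v_Q)$, so that by the jump relation~\eqref{eqn:D:jump} the Dirichlet traces of $u-u_Q$ match on all of $\R^n$ and, by the continuity relation~\eqref{eqn:D:cts} together with $\M_{\mat A}^+(v-v_Q)=0$ on $14Q$, the difference solves $L(u-u_Q)=0$ across $10Q\times(-\ell(Q),\ell(Q))$; this lets the paper take $\Phi_1=\abs{\arr h}$ and verify every hypothesis of Lemma~\ref{lem:lusin:+} by citing global layer potential bounds (using Lemma~\ref{lem:trace:N} to put $\Tr_{m-1}v$ into $L^p$ and the just-proved bound~\eqref{eqn:D:N:rough:+}), at the cost of having to subtract $\mathcal{A}_2^+(t\nabla^m\D^{\mat A}(\Tr_{m-1}v))$ back off at the end using~\eqref{eqn:D:lusin:rough:+}. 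You instead keep $u=\1_+v$ and correct only $u_Q$, locally, via the cutoff-trace double layer of Lemma~\ref{lem:N:trace:neumann}; the price is that $\arr f_Q$ depends on $\Tr_{m-1}^+(v-v_Q)$ near $Q$, which forces you to enlarge $\Phi_1$ to $\abs{\arr h}+\widetilde N_+(\nabla^{m-1}v)$ and absorb the extra term at the very end via~\eqref{eqn:neumann:p:rough:N} — which Lemma~\ref{lem:lusin:+} indeed permits, since it places no structural constraint on $\Phi$. Both verifications go through; yours has the minor advantage of bounding $\mathcal{A}_2^+(t\nabla^m v)$ directly. One small remark: the ``reverse-trace'' step $\doublebar{\Tr_{m-1}^+(v-v_Q)}_{L^2(14Q)}\leq C\doublebar{\widetilde N_+(\nabla^{m-1}(v-v_Q))}_{L^2(16Q)}$ does not require the machinery of Lemma~\ref{lem:Neumann:zero:reverse} (which is needed only for the extra derivative $\Tr_m$, and uses self-adjointness); it follows from the localized form of the estimate established in the proof of Lemma~\ref{lem:trace:N} with $\gamma=0$, with no use of the vanishing Neumann data. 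Also make sure the cutoff in your $\arr f_Q$ is taken at a slightly larger scale than in Lemma~\ref{lem:N:trace:neumann} as stated, so that $L(u-u_Q)=0$ holds on the full slab $10Q\times(-\ell(Q),\ell(Q))$ demanded by Lemma~\ref{lem:lusin:+} rather than on $10Q\times(-3\ell(Q)/4,3\ell(Q)/4)$; this is purely cosmetic.
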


\begin{proof}
To establish the bounds~\cref{eqn:S:lusin:rough:+,eqn:D:lusin:rough:+,eqn:S:lusin:+,eqn:D:lusin:+}, we will apply Lemma~\ref{lem:lusin:+} with $u$, $u_Q$, and $\Phi_1$ as in the proof of Theorem~\ref{thm:potentials:N:+} with $j=1$.
Then the bounds~\eqref{eqn:lusin:+:2} and~\eqref{eqn:lusin:local:lusin:2} follow from the bounds~\cref{eqn:S:lusin:2,eqn:S:lusin:rough:2,eqn:D:lusin:2,eqn:D:lusin:rough:2}. The bound~\eqref{eqn:lusin:N:2}
follows from Theorem~\ref{thm:potentials:N:+}. As observed in the proof of Theorem~\ref{thm:potentials:N:+}, formula \eqref{eqn:lusin:solution} and the bound~\eqref{eqn:lusin:local:N:2} are valid. Thus, $\doublebar{\mathcal{A}_2^\pm(t\nabla^{m}u)}_{L^p(\R^n)}
\leq C(1,p)\doublebar{\Phi_1}_{L^p(\R^n)}$, as desired.

We now turn to the bound~\eqref{eqn:neumann:lusin:+}.
As in the proof of Theorem~\ref{thm:neumann:N}, let $\arr h_Q=\eta_{14Q}\arr h$,
where $\eta_{14Q}$ is smooth, supported in $16Q$ and identically equal to $1$ in~$14Q$. Let $\arr g_Q=\Div \arr h_Q$.
Let $v_Q$ be the solution to the problems~\cref{eqn:neumann:both:2,,eqn:neumann:p:rough:N} with boundary data $\arr g_Q$.

By \cite[Theorems 5.1 and~5.3]{BarHM17pB}, we have that $\arr f=\Tr_{m-1} v$ and $\arr f_Q=\Tr_{m-1} v_Q$ exist and lie in $L^2(\R^n)$. In particular,
\begin{equation*}\doublebar{\Tr_{m-1} v_Q}_{L^2(\R^n)}
\leq C\doublebar{\mathcal{A}_2^+(t\nabla^m v_Q)}_{L^2(\R^n)}
\leq C^2\doublebar{\arr h_Q}_{L^2(\R^n)}
\leq C^2\doublebar{\arr h}_{L^2(16Q)}
.\end{equation*}
By Lemma~\ref{lem:trace:N} and the estimate in the problem~\eqref{eqn:neumann:p:rough:N}, we have that $\Tr_{m-1} v$, $\Tr_{m-1} v_Q\in \dot W^{0,p}_{m-1}(\R^n)$ with
\begin{equation*}\doublebar{\Tr_{m-1}v}_{\dot W^{0,p}_{m-1}(\R^n)}\leq C \doublebar{\arr h}_{L^p(\R^n)},\qquad\doublebar{\Tr_{m-1}v_Q}_{\dot W^{0,p}_{m-1}(\R^n)}\leq C \doublebar{\arr h}_{L^p(16Q)}.\end{equation*}

Let $u=\1_+v+\D^{\mat A}\arr f$ and let $u_Q=\1_+v_Q+\D^{\mat A}\arr f_Q$.

Then $\mathcal{A}_2^*(t\nabla^m u)\in L^2(\R^n)$ by the bound~\eqref{eqn:D:lusin:rough:2} and because $v$ is a solution to the problem~\eqref{eqn:neumann:both:2}. $\doublebar{\widetilde N_*(\nabla^{m-1} u)}_{L^p(\R^n)}\leq C(1,p)\doublebar{\arr h}_{L^p(\R^n)}$ by the bound~\eqref{eqn:D:N:rough:+} and because $v$ is a solution to the problem~\eqref{eqn:neumann:p:rough:N}.
By the bounds~\cref{eqn:D:lusin:rough:2,eqn:D:N:rough:2} on the double layer potential and because $v_Q$ is a solution to the problem~\eqref{eqn:neumann:both:2}, we have that $\doublebar{\mathcal{A}_2^*(t\nabla^m u_Q)}_{L^2(\R^n)} + \doublebar{\widetilde N_*(\nabla^{m-1} u_Q)}_{L^2(\R^n)} \leq C\doublebar{\arr h}_{L^2(16Q)}$.

Thus, the bounds \cref{eqn:lusin:+:2,eqn:lusin:N:2,eqn:lusin:local:lusin:2,eqn:lusin:local:N:2} are valid with $\Phi_1=\abs{\arr h}$.

Finally, $u-u_Q$ is in $\dot W^{m,2}(10Q\times((-\ell(Q),0)\cup(0,\ell(Q))))$  because $\widetilde N_*(\nabla^m u)$, $\widetilde N_*(\nabla^m u_Q)\in L^2(\R^n)$. As in the proof of Lemma~\ref{lem:N:trace:neumann}, by the jump relation~\eqref{eqn:D:jump}, we have that
$u-u_Q$ is in $\dot W^{m,2}(10Q\times(-\ell(Q),\ell(Q)))$, and by the continuity relation~\eqref{eqn:D:cts}, the definition~\eqref{eqn:Neumann:intro} of Neumann boundary values and the weak definition~\eqref{eqn:weak} of~$L$, we have that $L(u-u_Q)=0$ in $10Q\times(-\ell(Q),\ell(Q))$.

Thus, by Lemma~\ref{lem:lusin:+}, we have that the bound~\eqref{eqn:neumann:lusin:+} is valid.
\end{proof}

A straightforward density argument lets us pass from Theorems~\ref{thm:potentials:N:+} and~\ref{thm:lusin:+} to Theorem~\ref{thm:potentials}, and from Theorem~\ref{thm:neumann:N} and Theorem~\ref{thm:lusin:+} to Theorem~\ref{thm:neumann:p:rough}.

%\bibliographystyle{unsrt}
%\bibliography{bibli}

\begin{thebibliography}{10}

\bibitem{JerK81A}
David~S. Jerison and Carlos~E. Kenig.
\newblock The {D}irichlet problem in nonsmooth domains.
\newblock {\em Ann. of Math. (2)}, 113(2):367--382, 1981.

\bibitem{KenP93}
Carlos~E. Kenig and Jill Pipher.
\newblock The {N}eumann problem for elliptic equations with nonsmooth
  coefficients.
\newblock {\em Invent. Math.}, 113(3):447--509, 1993.

\bibitem{KenKPT00}
C.~Kenig, H.~Koch, J.~Pipher, and T.~Toro.
\newblock A new approach to absolute continuity of elliptic measure, with
  applications to non-symmetric equations.
\newblock {\em Adv. Math.}, 153(2):231--298, 2000.

\bibitem{KenR09}
Carlos~E. Kenig and David~J. Rule.
\newblock The regularity and {N}eumann problem for non-symmetric elliptic
  operators.
\newblock {\em Trans. Amer. Math. Soc.}, 361(1):125--160, 2009.

\bibitem{Rul07}
David~J. Rule.
\newblock Non-symmetric elliptic operators on bounded {L}ipschitz domains in
  the plane.
\newblock {\em Electron. J. Differential Equations}, pages No. 144, 1--8, 2007.

\bibitem{AusAH08}
Pascal Auscher, Andreas Axelsson, and Steve Hofmann.
\newblock Functional calculus of {D}irac operators and complex perturbations of
  {N}eumann and {D}irichlet problems.
\newblock {\em J. Funct. Anal.}, 255(2):374--448, 2008.

\bibitem{AusAM10A}
Pascal Auscher, Andreas Axelsson, and Alan McIntosh.
\newblock Solvability of elliptic systems with square integrable boundary data.
\newblock {\em Ark. Mat.}, 48(2):253--287, 2010.

\bibitem{AlfAAHK11}
M.~Angeles Alfonseca, Pascal Auscher, Andreas Axelsson, Steve Hofmann, and
  Seick Kim.
\newblock Analyticity of layer potentials and {$L^2$} solvability of boundary
  value problems for divergence form elliptic equations with complex
  {$L^\infty$} coefficients.
\newblock {\em Adv. Math.}, 226(5):4533--4606, 2011.

\bibitem{Bar13}
Ariel Barton.
\newblock Elliptic partial differential equations with almost-real
  coefficients.
\newblock {\em Mem. Amer. Math. Soc.}, 223(1051):vi+108, 2013.

\bibitem{HofKMP15A}
Steve Hofmann, Carlos Kenig, Svitlana Mayboroda, and Jill Pipher.
\newblock Square function\slash non-tangential maximal function estimates and
  the {D}irichlet problem for non-symmetric elliptic operators.
\newblock {\em J. Amer. Math. Soc.}, 28(2):483--529, 2015.

\bibitem{HofKMP15B}
Steve Hofmann, Carlos Kenig, Svitlana Mayboroda, and Jill Pipher.
\newblock The regularity problem for second order elliptic operators with
  complex-valued bounded measurable coefficients.
\newblock {\em Math. Ann.}, 361(3-4):863--907, 2015.

\bibitem{HofMitMor15}
Steve Hofmann, Marius Mitrea, and Andrew~J. Morris.
\newblock The method of layer potentials in {$L^p$} and endpoint spaces for
  elliptic operators with {$L^\infty$} coefficients.
\newblock {\em Proc. Lond. Math. Soc. (3)}, 111(3):681--716, 2015.

\bibitem{BarM16A}
Ariel Barton and Svitlana Mayboroda.
\newblock Layer potentials and boundary-value problems for second order
  elliptic operators with data in {B}esov spaces.
\newblock {\em Mem. Amer. Math. Soc.}, 243(1149):v+110, 2016.

\bibitem{MaeM16}
Yasunori Maekawa and Hideyuki Miura.
\newblock On {P}oisson operators and {D}irichlet-{N}eumann maps in {$H^s$} for
  divergence form elliptic operators with {L}ipschitz coefficients.
\newblock {\em Trans. Amer. Math. Soc.}, 368(9):6227--6252, 2016.

\bibitem{AusS16}
Pascal Auscher and Sebastian Stahlhut.
\newblock Functional calculus for first order systems of {D}irac type and
  boundary value problems.
\newblock {\em M\'em. Soc. Math. Fr. (N.S.)}, (144):vii+164, 2016.

\bibitem{MaeM17}
Yasunori Maekawa and Hideyuki Miura.
\newblock On domain of {P}oisson operators and factorization for divergence
  form elliptic operators.
\newblock {\em Manuscripta Math.}, 152(3-4):459--512, 2017.

\bibitem{AmeA18}
Alex Amenta and Pascal Auscher.
\newblock {\em Elliptic boundary value problems with fractional regularity
  data}, volume~37 of {\em CRM Monograph Series}.
\newblock American Mathematical Society, Providence, RI, 2018.
\newblock The first order approach.

\bibitem{AusM19}
Pascal Auscher and Mihalis Mourgoglou.
\newblock Representation and uniqueness for boundary value elliptic problems
  via first order systems.
\newblock {\em Rev. Mat. Iberoam.}, 35(1):241--315, 2019.

\bibitem{BarHM17}
Ariel Barton, Steve Hofmann, and Svitlana Mayboroda.
\newblock Square function estimates on layer potentials for higher-order
  elliptic equations.
\newblock {\em Math. Nachr.}, 290(16):2459--2511, 2017.

\bibitem{BarHM17pA}
Ariel Barton, Steve Hofmann, and Svitlana Mayboroda.
\newblock Bounds on layer potentials with rough inputs for higher order
  elliptic equations.
\newblock {\em Proc. Lond. Math. Soc.}
\newblock to appear (a preprint may be found at
  \href{http://arxiv.org/abs/1703.06847}{\texttt{arXiv:1703.06847 [math.AP]}}).

\bibitem{BarHM17pB}
Ariel Barton, Steve Hofmann, and Svitlana Mayboroda.
\newblock {D}irichlet and {N}eumann boundary values of solutions to higher
  order elliptic equations.
\newblock {\em Ann. Inst. Fourier (Grenoble)}.
\newblock to appear (a preprint may be found at
  \href{http://arxiv.org/abs/1703.06963}{\texttt{arXiv:1703.06963 [math.AP]}}).

\bibitem{BarHM18}
Ariel Barton, Steve Hofmann, and Svitlana Mayboroda.
\newblock The {N}eumann problem for higher order elliptic equations with
  symmetric coefficients.
\newblock {\em Math. Ann.}, 371(1-2):297--336, 2018.

\bibitem{BarHM18p}
Ariel Barton, Steve Hofmann, and Svitlana Mayboroda.
\newblock Nontangential estimates on layer potentials and the {N}eumann problem
  for higher order elliptic equations.
\newblock submitted,
  \href{http://arxiv.org/abs/1808.07137v1}{\texttt{arXiv:1808.07137v1[math.AP]}}.

\bibitem{Bar17}
Ariel Barton.
\newblock Layer potentials for general linear elliptic systems.
\newblock {\em Electron. J. Differential Equations}, (309):1--23, 2017.

\bibitem{CohG85}
Jonathan Cohen and John Gosselin.
\newblock Adjoint boundary value problems for the biharmonic equation on
  {$C^1$} domains in the plane.
\newblock {\em Ark. Mat.}, 23(2):217--240, 1985.

\bibitem{Ver05}
Gregory~C. Verchota.
\newblock The biharmonic {N}eumann problem in {L}ipschitz domains.
\newblock {\em Acta Math.}, 194(2):217--279, 2005.

\bibitem{She07B}
Zhongwei Shen.
\newblock The {$L^p$} boundary value problems on {L}ipschitz domains.
\newblock {\em Adv. Math.}, 216(1):212--254, 2007.

\bibitem{MitM13B}
Irina Mitrea and Marius Mitrea.
\newblock Boundary value problems and integral operators for the bi-{L}aplacian
  in non-smooth domains.
\newblock {\em Atti Accad. Naz. Lincei Rend. Lincei Mat. Appl.},
  24(3):329--383, 2013.

\bibitem{MitM13A}
Irina Mitrea and Marius Mitrea.
\newblock {\em Multi-layer potentials and boundary problems for higher-order
  elliptic systems in {L}ipschitz domains}, volume 2063 of {\em Lecture Notes
  in Mathematics}.
\newblock Springer, Heidelberg, 2013.

\bibitem{Ver10}
Gregory~C. Verchota.
\newblock Boundary coerciveness and the {N}eumann problem for 4th order linear
  partial differential operators.
\newblock In {\em Around the research of {V}ladimir {M}az'ya. {II}}, volume~12
  of {\em Int. Math. Ser. (N. Y.)}, pages 365--378. Springer, New York, 2010.

\bibitem{BarM16B}
Ariel Barton and Svitlana Mayboroda.
\newblock Higher-order elliptic equations in non-smooth domains: a partial
  survey.
\newblock In {\em Harmonic analysis, partial differential equations, complex
  analysis, {B}anach spaces, and operator theory. {V}ol. 1}, volume~4 of {\em
  Assoc. Women Math. Ser.}, pages 55--121. Springer, [Cham], 2016.

\bibitem{Bar19pC}
Ariel Barton.
\newblock The {G}reen's formula for higher order elliptic equations.
\newblock in preparation.

\bibitem{DahKV88}
B.~E.~J. Dahlberg, C.~E. Kenig, and G.~C. Verchota.
\newblock Boundary value problems for the systems of elastostatics in
  {L}ipschitz domains.
\newblock {\em Duke Math. J.}, 57(3):795--818, 1988.

\bibitem{FabKV88}
E.~B. Fabes, C.~E. Kenig, and G.~C. Verchota.
\newblock The {D}irichlet problem for the {S}tokes system on {L}ipschitz
  domains.
\newblock {\em Duke Math. J.}, 57(3):769--793, 1988.

\bibitem{Fab88}
Eugene Fabes.
\newblock Layer potential methods for boundary value problems on {L}ipschitz
  domains.
\newblock In {\em Potential theory---surveys and problems ({P}rague, 1987)},
  volume 1344 of {\em Lecture Notes in Math.}, pages 55--80. Springer, Berlin,
  1988.

\bibitem{Gao91}
Wen~Jie Gao.
\newblock Layer potentials and boundary value problems for elliptic systems in
  {L}ipschitz domains.
\newblock {\em J. Funct. Anal.}, 95(2):377--399, 1991.

\bibitem{MitM11}
Dorina Mitrea and Irina Mitrea.
\newblock On the regularity of {G}reen functions in {L}ipschitz domains.
\newblock {\em Comm. Partial Differential Equations}, 36(2):304--327, 2011.

\bibitem{HofMayMou15}
Steve Hofmann, Svitlana Mayboroda, and Mihalis Mourgoglou.
\newblock Layer potentials and boundary value problems for elliptic equations
  with complex {$L^\infty$} coefficients satisfying the small {C}arleson
  measure norm condition.
\newblock {\em Adv. Math.}, 270:480--564, 2015.

\bibitem{GraH16}
Ana Grau de~la Herr\'an and Steve Hofmann.
\newblock A local {$Tb$} theorem with vector-valued testing functions.
\newblock In {\em Some topics in harmonic analysis and applications}, volume~34
  of {\em Adv. Lect. Math. (ALM)}, pages 203--229. Int. Press, Somerville, MA,
  2016.

\bibitem{Ros13}
Andreas Ros{\'e}n.
\newblock Layer potentials beyond singular integral operators.
\newblock {\em Publ. Mat.}, 57(2):429--454, 2013.

\bibitem{AusM14}
Pascal Auscher and Mihalis Mourgoglou.
\newblock Boundary layers, {R}ellich estimates and extrapolation of solvability
  for elliptic systems.
\newblock {\em Proc. Lond. Math. Soc. (3)}, 109(2):446--482, 2014.

\bibitem{Agm57}
Shmuel Agmon.
\newblock Multiple layer potentials and the {D}irichlet problem for higher
  order elliptic equations in the plane. {I}.
\newblock {\em Comm. Pure Appl. Math}, 10:179--239, 1957.

\bibitem{CohG83}
Jonathan Cohen and John Gosselin.
\newblock The {D}irichlet problem for the biharmonic equation in a {$C^{1}$}
  domain in the plane.
\newblock {\em Indiana Univ. Math. J.}, 32(5):635--685, 1983.

\bibitem{FabJR78}
E.~B. Fabes, M.~Jodeit, Jr., and N.~M. Rivi{\`e}re.
\newblock Potential techniques for boundary value problems on
  {$C^{1}$}-domains.
\newblock {\em Acta Math.}, 141(3-4):165--186, 1978.

\bibitem{Ver84}
Gregory Verchota.
\newblock Layer potentials and regularity for the {D}irichlet problem for
  {L}aplace's equation in {L}ipschitz domains.
\newblock {\em J. Funct. Anal.}, 59(3):572--611, 1984.

\bibitem{DahK87}
Bj{\"o}rn E.~J. Dahlberg and Carlos~E. Kenig.
\newblock Hardy spaces and the {N}eumann problem in {$L^p$} for {L}aplace's
  equation in {L}ipschitz domains.
\newblock {\em Ann. of Math. (2)}, 125(3):437--465, 1987.

\bibitem{FabMM98}
Eugene Fabes, Osvaldo Mendez, and Marius Mitrea.
\newblock Boundary layers on {S}obolev-{B}esov spaces and {P}oisson's equation
  for the {L}aplacian in {L}ipschitz domains.
\newblock {\em J. Funct. Anal.}, 159(2):323--368, 1998.

\bibitem{Zan00}
Daniel~Z. Zanger.
\newblock The inhomogeneous {N}eumann problem in {L}ipschitz domains.
\newblock {\em Comm. Partial Differential Equations}, 25(9-10):1771--1808,
  2000.

\bibitem{May05}
Svitlana Mayboroda.
\newblock {\em The {P}oisson problem on {L}ipschitz domains}.
\newblock ProQuest LLC, Ann Arbor, MI, 2005.
\newblock Thesis (Ph.D.)--University of Missouri-Columbia.

\bibitem{BarM13}
Ariel Barton and Svitlana Mayboroda.
\newblock The {D}irichlet problem for higher order equations in composition
  form.
\newblock {\em J. Funct. Anal.}, 265(1):49--107, 2013.

\bibitem{Cam80}
S.~Campanato.
\newblock {\em Sistemi ellittici in forma divergenza. {R}egolarit\`a
  all'interno}.
\newblock Qua\-der\-ni. [Publications]. Scuola Normale Superiore Pisa, Pisa,
  1980.

\bibitem{AusQ00}
P.~Auscher and M.~Qafsaoui.
\newblock Equivalence between regularity theorems and heat kernel estimates for
  higher order elliptic operators and systems under divergence form.
\newblock {\em J. Funct. Anal.}, 177(2):310--364, 2000.

\bibitem{Bar16}
Ariel Barton.
\newblock Gradient estimates and the fundamental solution for higher-order
  elliptic systems with rough coefficients.
\newblock {\em Manuscripta Math.}, 151(3-4):375--418, 2016.

\bibitem{Nad63}
A.~Nadai.
\newblock {\em Theory of {F}low and {F}racture of {S}olids}, volume~II.
\newblock McGraw-Hill, 1963.

\bibitem{GirN95}
Jean Giroire and Jean-Claude N{\'e}d{\'e}lec.
\newblock A new system of boundary integral equations for plates with free
  edges.
\newblock {\em Math. Methods Appl. Sci.}, 18(10):755--772, 1995.

\bibitem{Swe09}
Guido Sweers.
\newblock A survey on boundary conditions for the biharmonic.
\newblock {\em Complex Var. Elliptic Equ.}, 54(2):79--93, 2009.

\bibitem{Dah79}
Bj{\"o}rn E.~J. Dahlberg.
\newblock On the {P}oisson integral for {L}ipschitz and {$C^{1}$}-domains.
\newblock {\em Studia Math.}, 66(1):13--24, 1979.

\bibitem{She06A}
Zhongwei Shen.
\newblock The {$L^p$} {D}irichlet problem for elliptic systems on {L}ipschitz
  domains.
\newblock {\em Math. Res. Lett.}, 13(1):143--159, 2006.

\bibitem{MarMMM16}
Jos\'{e}~Mar\'{\i}a Martell, Dorina Mitrea, Irina Mitrea, and Marius Mitrea.
\newblock The {D}irichlet problem for elliptic systems with data in {K}\"{o}the
  function spaces.
\newblock {\em Rev. Mat. Iberoam.}, 32(3):913--970, 2016.

\bibitem{DahKV86}
B.~E.~J. Dahlberg, C.~E. Kenig, and G.~C. Verchota.
\newblock The {D}irichlet problem for the biharmonic equation in a {L}ipschitz
  domain.
\newblock {\em Ann. Inst. Fourier (Grenoble)}, 36(3):109--135, 1986.

\bibitem{PipV92}
Jill Pipher and Gregory Verchota.
\newblock The {D}irichlet problem in {$L^p$} for the biharmonic equation on
  {L}ipschitz domains.
\newblock {\em Amer. J. Math.}, 114(5):923--972, 1992.

\bibitem{PipV95A}
J.~Pipher and G.~C. Verchota.
\newblock Maximum principles for the polyharmonic equation on {L}ipschitz
  domains.
\newblock {\em Potential Anal.}, 4(6):615--636, 1995.

\bibitem{Ver96}
Gregory~C. Verchota.
\newblock Potentials for the {D}irichlet problem in {L}ipschitz domains.
\newblock In {\em Potential theory---{ICPT} 94 ({K}outy, 1994)}, pages
  167--187. de Gruyter, Berlin, 1996.

\bibitem{JerK81B}
David~S. Jerison and Carlos~E. Kenig.
\newblock The {N}eumann problem on {L}ipschitz domains.
\newblock {\em Bull. Amer. Math. Soc. (N.S.)}, 4(2):203--207, 1981.

\bibitem{Ver90}
Gregory Verchota.
\newblock The {D}irichlet problem for the polyharmonic equation in {L}ipschitz
  domains.
\newblock {\em Indiana Univ. Math. J.}, 39(3):671--702, 1990.

\bibitem{KilS11B}
Joel Kilty and Zhongwei Shen.
\newblock The {$L^p$} regularity problem on {L}ipschitz domains.
\newblock {\em Trans. Amer. Math. Soc.}, 363(3):1241--1264, 2011.

\bibitem{Tri83}
Hans Triebel.
\newblock {\em Theory of function spaces}, volume~78 of {\em Monographs in
  Mathematics}.
\newblock Birkh\"auser Verlag, Basel, 1983.

\bibitem{RunS96}
Thomas Runst and Winfried Sickel.
\newblock {\em Sobolev spaces of fractional order, {N}emytskij operators, and
  nonlinear partial differential equations}, volume~3 of {\em de Gruyter Series
  in Nonlinear Analysis and Applications}.
\newblock Walter de Gruyter \& Co., Berlin, 1996.

\bibitem{JerK95}
David Jerison and Carlos~E. Kenig.
\newblock The inhomogeneous {D}irichlet problem in {L}ipschitz domains.
\newblock {\em J. Funct. Anal.}, 130(1):161--219, 1995.

\bibitem{MayMit04A}
Svitlana Mayboroda and Marius Mitrea.
\newblock Sharp estimates for {G}reen potentials on non-smooth domains.
\newblock {\em Math. Res. Lett.}, 11(4):481--492, 2004.

\bibitem{MitMW11}
I.~Mitrea, M.~Mitrea, and M.~Wright.
\newblock Optimal estimates for the inhomogeneous problem for the
  bi-{L}aplacian in three-dimensional {L}ipschitz domains.
\newblock {\em J. Math. Sci. (N. Y.)}, 172(1):24--134, 2011.
\newblock Problems in mathematical analysis. No. 51.

\bibitem{MazMS10}
V.~Maz'ya, M.~Mitrea, and T.~Shaposhnikova.
\newblock The {D}irichlet problem in {L}ipschitz domains for higher order
  elliptic systems with rough coefficients.
\newblock {\em J. Anal. Math.}, 110:167--239, 2010.

\bibitem{She06B}
Zhongwei Shen.
\newblock Necessary and sufficient conditions for the solvability of the
  {$L^p$} {D}irichlet problem on {L}ipschitz domains.
\newblock {\em Math. Ann.}, 336(3):697--725, 2006.

\bibitem{PipV95B}
Jill Pipher and Gregory~C. Verchota.
\newblock Dilation invariant estimates and the boundary {G}\aa rding inequality
  for higher order elliptic operators.
\newblock {\em Ann. of Math. (2)}, 142(1):1--38, 1995.

\bibitem{DahK90}
B.~E.~J. Dahlberg and C.~E. Kenig.
\newblock {$L^p$} estimates for the three-dimensional systems of elastostatics
  on {L}ipschitz domains.
\newblock In {\em Analysis and partial differential equations}, volume 122 of
  {\em Lecture Notes in Pure and Appl. Math.}, pages 621--634. Dekker, New
  York, 1990.

\bibitem{Liz60}
P.~I. Lizorkin.
\newblock Boundary properties of functions from ``weight'' classes.
\newblock {\em Soviet Math. Dokl.}, 1:589--593, 1960.

\bibitem{Jaw77}
Bj{\"o}rn Jawerth.
\newblock Some observations on {B}esov and {L}izorkin-{T}riebel spaces.
\newblock {\em Math. Scand.}, 40(1):94--104, 1977.

\bibitem{AusT95}
Pascal Auscher and Philippe Tchamitchian.
\newblock Calcul fontionnel pr\'ecis\'e pour des op\'er\-a\-teurs elliptiques
  complexes en dimension un (et applications \`a certaines \'equations
  elliptiques complexes en dimension deux).
\newblock {\em Ann. Inst. Fourier (Grenoble)}, 45(3):721--778, 1995.

\bibitem{Gia83}
Mariano Giaquinta.
\newblock {\em Multiple integrals in the calculus of variations and nonlinear
  elliptic systems}, volume 105 of {\em Annals of Mathematics Studies}.
\newblock Princeton University Press, Princeton, NJ, 1983.

\bibitem{Axe10}
Andreas Axelsson.
\newblock Non-unique solutions to boundary value problems for non-symmetric
  divergence form equations.
\newblock {\em Trans. Amer. Math. Soc.}, 362(2):661--672, 2010.

\bibitem{Eva98}
Lawrence~C. Evans.
\newblock {\em Partial differential equations}, volume~19 of {\em Graduate
  Studies in Mathematics}.
\newblock American Mathematical Society, Providence, RI, 1998.

\bibitem{Bar19pB}
Ariel Barton.
\newblock Bounds on layer potentials with ${L}^p$ inputs for higher order
  elliptic equations.
\newblock in preparation.

\end{thebibliography}

\end{document}